\documentclass[11pt, a4paper]{article}

\usepackage{amsmath}
\usepackage{amsfonts}
\usepackage{amssymb}
\usepackage[francais,english]{babel}

\setlength{\topmargin}{-0.5cm} \setlength{\textheight}{24cm}
\setlength{\textwidth}{15.5cm} \setlength{\oddsidemargin}{0.5cm}

\def\english{\selectlanguage{english}}

% general defs
\providecommand\mathbb{\bf}
\newcommand\R{{\mathbb R}}
\newcommand\N{{\mathbb N}}

\newtheorem{thm}{Theorem}[section]
\newtheorem{lemma}{Lemma}[section]
\newtheorem{pro}{Proposition}[section]

\newtheorem{remark}{Remark}[section]

\newcounter{Remark}

\renewcommand\theRemark{\arabic{Remark}}

\newcounter{steps}
\newenvironment{proof}[1][]{%
\par\medbreak\setcounter{steps}{0}
{\noindent\bfseries Proof#1. }} {\hfill\fbox{\ }\medbreak}

\newcounter{substeps}[steps]

%%%%%%%%%%%%%%%%%%%%

\newcommand{\intsy}[1]{
\int _0 ^t\!\!\int _{\R ^m} \!\!#1 \;\mathrm{d}y\mathrm{d}s}

\newcommand{\intty}[1]{
\int _0 ^{+\infty}\!\!\!\!\int _{\R ^m} \!\!#1 \;\mathrm{d}y\mathrm{d}t}

\newcommand{\inty}[1]{
\int _{\R^m}\!\!#1 \;\mathrm{d}y}

\newcommand{\uin}[0]{
u_{\mathrm{in}}}

\newcommand{\vin}[0]{
v_{\mathrm{in}}}

\newcommand{\win}[0]{
w_{\mathrm{in}}}

\newcommand{\dom}[0]{
\mathrm{dom}}

\newcommand{\cdny}[0]{
\cdot \nabla _y }

\newcommand{\dyy}[0]{
\partial _y Y (s;y)}

\newcommand{\dyb}[0]{
\partial _y b}

\newcommand{\ysy}[0]{
Y(s;y)}

\newcommand{\eps}[0]{
\varepsilon}

\newcommand{\lime}[0]{
\lim _{\varepsilon \searrow 0 }}

\newcommand{\ue}[0]{
u ^\varepsilon}

\newcommand{\uein}[0]{
u ^\varepsilon _ {\mathrm{in}}}

\newcommand{\uekin}[0]{
u ^{\varepsilon _k} _ {\mathrm{in}}}

\newcommand{\md}[0]{
\mathrm{d}}

\newcommand{\tue}[0]{
\tilde{u} ^\varepsilon}

\newcommand{\uek}[0]{
u ^{\varepsilon _k}}

\newcommand{\limk}[0]{
\lim _{k \to +\infty}}

\newcommand{\divy}[0]{
\mathrm{div}_y}

\newcommand{\loloc}[0]{
L^1_{\mathrm{loc}}(\R^m)}

\newcommand{\ltloc}[0]{
L^2_{\mathrm{loc}}(\R^m)}

\newcommand{\dpri}[0]{
{\cal D}^{\;\prime}(\R^m)}

\newcommand{\aeps}[0]{
a^\varepsilon}

\newcommand{\ran}[0]{
\mathrm{Range\;}}

\newcommand{\lty}[0]{
L^2(\R ^m)}

\newcommand{\loy}[0]{
L^1(\R ^m)}

\newcommand{\liy}[0]{
L^\infty(\R ^m)}

\newcommand{\kerbg}[0]{
\ker ( b \cdot \nabla _y ) }

\newcommand{\bg}[0]{
b \cdot \nabla _y  }

\newcommand{\hoy}[0]{
{H^1(\R^m)}}

\newcommand{\nablar}[0]{
\nabla ^R}

\newcommand{\litlty}[0]{
{L^\infty (\R_+; L^2 (\R^m))}}

\newcommand{\lttlty}[0]{
{L^2 (\R_+; L^2 (\R^m))}}

\newcommand{\ave}[1]{
\left \langle #1 \right \rangle }

% ----- SPACING -------
\baselinestretch\renewcommand{\baselinestretch}{1.5}

%%%%%%%%%%%%%%%%%%%%%%%%%%%

\begin{document}
\english

\title{Strongly anisotropic diffusion problems; asymptotic analysis}

\author{Mihai Bostan
\thanks{Laboratoire d'Analyse, Topologie, Probabilit\'es LATP, Centre de Math\'ematiques et Informatique CMI, UMR CNRS 7353, 39 rue Fr\'ed\'eric Joliot Curie, 13453 Marseille  Cedex 13
France. E-mail : {\tt bostan@cmi.univ-mrs.fr}}
}

\date{ (\today)}

\maketitle

\begin{abstract}
The subject matter of this paper concerns anisotropic diffusion equations: we consider heat equations whose diffusion matrix have disparate eigenvalues. We determine first and second order approximations, we study the well-posedness of them and establish convergence results. The analysis relies on averaging techniques, which have been used previously for studying transport equations whose advection fields have disparate components.

\end{abstract}

\paragraph{Keywords:}
Anisotropic diffusion, Variational methods, Multiple scales, Average operator.

\paragraph{AMS classification:} 35Q75, 78A35.

\section{Introduction}
\label{Intro}
\indent
Many real life applications lead to highly anisotropic diffusion equations: flows in porous media, quasi-neutral plasmas, microscopic transport in magnetized plasmas \cite{Bra65}, plasma thrusters, image processing \cite{PerMal90}, \cite{Wei98}, thermal properties of crystals \cite{DiaShaYun91}. In this paper we investigate the behavior of the solutions for heat equations whose diffusion becomes very high along some direction. We consider the problem
\begin{equation}
%\left
%\lbrace 
%\begin{array}{l l}
\label{Equ1} \partial _t \ue - \divy ( D(y) \nabla _y \ue ) - \frac{1}{\eps} \divy ( b(y) \otimes b(y) \nabla _y \ue ) = 0,  \;\;(t,y) \in \R_+ \times \R ^m
\end{equation}
\begin{equation}
\label{Equ2} 
\ue (0,y) = \uein (y),  \;\;y  \in \R^m
%\end{array}
%\right.
\end{equation}
where $D(y) \in {\cal M}_m (\R)$ and $b(y) \in \R^m$ are smooth given matrix field and vector field on $\R^m$, respectively. For any two vectors $\xi, \eta$, the notation $\xi \otimes \eta$ stands for the matrix whose entry $(i,j)$ is $\xi _i \eta _j$, and for any two matrix $A, B$ the notation $A:B$ stands for $\mathrm{trace}(^t AB) = A_{ij} B_{ij}$ (using Einstein summation convention).
We assume that at any $y \in \R^m$ the matrix $D(y)$ is symmetric and $D(y) + b (y) \otimes b(y)$ is positive definite
\begin{equation}
\label{Equ3}
^t D (y) = D(y),\;\;\exists \;d >0 \;\;\mbox{such that}\;\;D(y)\xi\cdot\xi + (b(y) \cdot \xi)^2 \geq d \;|\xi|^2,\;\;\xi \in \R ^m,\;\;y \in \R ^m.
\end{equation}
The vector field $b(y)$, to which the anisotropy is aligned, is supposed divergence free {\it i.e.,} $\divy b = 0$. We intend to analyse the behavior of \eqref{Equ1}, \eqref{Equ2} for small $\eps$, let us say $0 < \eps \leq 1$. In that cases $D(y) + \frac{1}{\eps} b(y) \otimes b(y)$ remains positive definite and if $(\uein)_\eps$ remain in a bounded set of $\lty$, then $(\ue)_\eps$ remain in a bounded set of $\litlty{}$ since, for any $t \in \R_+$ we have 
\begin{align*}
\frac{1}{2}\inty{(\ue (t,y))^2} & + d \intsy{|\nabla _y \ue (s,y)|^2}  \leq \frac{1}{2}\inty{(\ue (t,y))^2} \\
& + \intsy{\left \{D(y) + \frac{1}{\eps} b(y) \otimes b(y) \right \} : \nabla _y \ue (s,y) \otimes \nabla _y \ue (s,y)} \\
& = \frac{1}{2}\inty{(\uein (y))^2}. 
\end{align*}
In particular, when $\eps \searrow 0$, $(\ue)_\eps$ converges, at least weakly $\star$ in $\litlty{}$ towards some limit $u \in \litlty{}$. Notice that the explicit methods are not well adapted for the numerical approximation of \eqref{Equ1}, \eqref{Equ2} when $\eps \searrow 0$, since the CFL condition leads to severe time step constraints like
\[
\frac{d}{\eps} \frac{\Delta t}{|\Delta y |^2} \leq \frac{1}{2}
\]
where $\Delta t$ is the time step and $\Delta y $ is the grid spacing. In such cases implicit methods are desirable \cite{BalTilHow08}, \cite{ShaHam10}.

Rather than solving \eqref{Equ1}, \eqref{Equ2} for small $\eps >0$, we concentrate on the limit model satisfied by the limit solution $u = \lime \ue$. We will see that the limit model is still a parabolic problem, decreasing the $\lty$ norm and satisfying the maximum principle. At least formally, the limit solution $u$ is the dominant term of the expansion
\begin{equation}
\label{Equ6} \ue = u + \eps u ^1 + \eps ^2 u ^2 + ...
\end{equation}
Plugging the Ansatz \eqref{Equ6} into \eqref{Equ1} leads to 
\begin{equation}
\label{Equ7} \divy (b \otimes b \nabla _y u ) = 0,\;\;(t,y) \in \R_+ \times \R ^m
\end{equation}
\begin{equation}
\label{Equ8} \partial _t u - \divy (D \nabla _y u ) - \divy ( b \otimes b \nabla _y u^1) = 0,\;\;(t,y) \in \R_+ \times \R ^m
\end{equation}
\[
\vdots
\]
Clearly, the constraint \eqref{Equ7} says that at any time $t \in \R_+$, $b \cdot \nabla _y u = 0$, or equivalently $u(t,\cdot)$ remains constant along the flow of $b$, see \eqref{EquFlow}
\[
u(t, Y(s;y)) = u(t,y),\;\;s \in \R,\;\;y \in \R^m.
\]
The closure for $u$ comes by eliminating $u^1$ in \eqref{Equ8}, combined with the fact that \eqref{Equ7} holds true at any time $t \in \R_+$. The symmetry of the operator $\divy (b \otimes b \nabla _y)$ implies that $\partial _t u - \divy (D \nabla _y u)$ belongs to $(\ker (b \cdot \nabla _y ))^\perp$ and therefore we obtain the weak formulation
\begin{equation}
\label{Equ9}
\frac{\md}{\md t}\inty{u(t,y) \varphi (y)} + \inty{D \nabla _y u (t,y) \cdot \nabla _y \varphi (y) } = 0,\;\;\varphi \in \hoy \cap \kerbg{}.
\end{equation}
The above formulation is not satisfactory, since the choice of test functions is constrained by \eqref{Equ7}; \eqref{Equ9} is useless for numerical simulation. A more convenient situation is to reduce \eqref{Equ9} to another problem, by removing the constraint \eqref{Equ7}. The method we employ here is related to the averaging technique which has been used to handle transport equations with diparate advection fields \cite{BosAsyAna}, \cite{BosTraSin}, \cite{BosGuidCent3D}, \cite{Bos12}
\begin{equation}
\label{Equ10} \partial _t \ue + a(t,y) \cdot \nabla _y \ue + \frac{1}{\eps} b (y) \cdot \nabla _y \ue = 0,\;\;(t,y) \in \R_+ \times \R^m
\end{equation}
\begin{equation}
\label{Equ11} \ue (0,y) = \uein (y),\;\;y \in \R^m.
\end{equation}
Using the same Ansatz \eqref{Equ6} we obtain as before that $b \cdot \nabla _y u (t,\cdot) = 0, t \in \R_+$ and the closure for $u$ writes
\begin{equation}
\label{Equ12}
\mathrm{Proj}_{\kerbg} \{ \partial _t u + a\cdot \nabla _y u \} = 0
\end{equation}
or equivalently
\begin{equation}
\label{Equ14} \frac{\md }{\md t} \inty{u(t,y) \varphi (y) } - \inty{u(t,y) \;a \cdot \nabla _y \varphi } = 0
\end{equation}
for any smooth function satisfying the constraint $b \cdot \nabla _y \varphi = 0$. The method relies on averaging since the projection on $\kerbg$ coincides with the average along the flow of $b$, cf. Proposition \ref{AverageOperator}. As $u$ satisfies the constraint $b \cdot \nabla _y u = 0$, it is easily seen that $\mathrm{Proj}_{\kerbg} \partial _t u = \partial _t u$. A simple case to start with is when the transport operator $a \cdot \nabla _y$ and $b \cdot \nabla _y$ commute {\it i.e.,} $[b \cdot \nabla _y, a \cdot \nabla _y ] = 0$. In this case $a \cdot \nabla _y$ leaves invariant the subspace of the constraints, implying that $\mathrm{Proj}_{\kerbg} \{a \cdot \nabla _y u \} = a \cdot \nabla _y u$. Therefore \eqref{Equ12} reduces to a transport equation and it is easily seen that this equation propagates the constraint, which allows us to remove it. Things happen similarly when the transport operators $a \cdot \nabla _y, b \cdot \nabla _y$ do not commute, but the transport operator of the limit model may change. In \cite{BosTraSin} we prove that there is a transport operator $A \cdot \nabla _y$, commuting with $b \cdot \nabla _y$, such that for any $u \in \kerbg$ we have
\[
\mathrm{Proj}_{\kerbg} \{a \cdot \nabla _y u \} = A \cdot \nabla _y u.
\]
Once we have determined the field $A$, \eqref{Equ12} can be replaced by $\partial _t u + A \cdot \nabla _y u = 0$, which propagates the constraint $b \cdot \nabla _y u (t) = 0$ as well.

Comming back to the formulation \eqref{Equ9}, we are looking for a matrix field $\tilde{D}(y)$ such that $\divy (\tilde{D} \nabla _y)$ commutes with $b \cdot \nabla _y$ and
\[
\mathrm{Proj}_{\kerbg} \{\divy (D(y) \nabla _y u ) \}= \divy (\tilde{D}(y)\nabla _y u),\;\;u \in \kerbg{}.
\]
We will see that, under suitable hypotheses, it is possible to find such a matrix field $\tilde{D}$, and therefore \eqref{Equ9} reduces to the parabolic model
\begin{equation}
\label{Equ15} \partial _t u - \divy (\tilde{D}(y) \nabla _y u ) = 0,\;\;(t,y) \in \R_+ \times \R^m.
\end{equation}
The matrix field $\tilde{D}$ will appear as the orthogonal projection of the matrix field $D$ (with respect to some scalar product to be determined) on the subspace of matrix fields $A$ satisfying $[b\cdot \nabla _y, \divy(A \nabla _y)] = 0$. The field $\tilde{D}$ inherits the properties of $D$, like symmetry, positivity, etc.

Our paper is organized as follows. The main results are presented in Section \ref{ModMainRes}. Section \ref{AveOpe} is devoted to the interplay between the average operator and first and second order linear differential operators. In particular we justify the existence of the {\it averaged} matrix field $\tilde{D}$ associated to any field $D$ of symmetric, positive matrix. The first order approximation is justified in Section \ref{FirstOrdApp} and the second order approximation is discussed in Section \ref{SecOrdApp}. Several technical proofs are gathered in Appendix \ref{A}.

\section{Presentation of the models and main results}
\label{ModMainRes}
\noindent
We assume that the vector field $b :\R^m \to \R^m$ is smooth and divergence free
\begin{equation}
\label{Equ21} b \in W^{1,\infty}_{\mathrm{loc}} (\R^m),\;\;\divy b = 0
\end{equation}
with linear growth
\begin{equation}
\label{Equ22}
\exists \;C > 0\;\;\mbox{such that}\;\; |b(y)| \leq C (1 + |y|),\;\;y \in \R^m.
\end{equation}
We denote by $Y(s;y)$ the characteristic flow associated to $b$
\begin{equation}
\label{EquFlow} \frac{\md Y}{\md s} = b(Y(s;y)),\;\;Y(s;0) = y,\;\;s \in \R,\;\;y \in \R^m.
\end{equation}
Under the above hypotheses, this flow has the regularity $Y \in W^{1,\infty} _{\mathrm{loc}} (\R \times \R^m)$ and is measure preserving.

We concentrate on matrix fields $A(y) \in \loloc{}$ such that $[b(y) \cdot \nabla _y, \divy ( A(y) \nabla _y)] = 0$, let us say in $\dpri$. We check that the commutator between $b \cdot \nabla _y $ and $\divy (A \nabla _y)$ writes cf. Proposition \ref{ComSecOrd}
\[
[b(y) \cdot \nabla _y, \divy ( A(y) \nabla _y)] = \divy ( [b,A]\nabla _y)\;\;\mbox{in}\;\;\dpri
\]
where the bracket between $b$ and $A$ is given by
\[
[b,A] := (b \cdot \nabla _y) A - \partial _y b A (y) - A(y) \;^t \partial _y b,\;\;y \in \R^m.
\]
Several characterizations for the solutions of $[b,A] = 0$ in $\dpri$ are indicated in the Propositions \ref{MFI}, \ref{WMFI}, among which
\begin{equation}
\label{Equ16} A(Y(s;y)) = \partial _y Y (s;y) A(y) \;{^t \partial _y Y} (s;y),\;\;s\in \R,\;\;y \in \R ^m.
\end{equation}
We assume that there is a matrix field $P(y)$ such that 
\begin{equation}
\label{Equ56}  ^t P = P,\;\;P(y) \xi \cdot \xi >0,\;\;\xi \in \R^m,\;\; y \in \R^m,\;\;P^{-1}, P \in \ltloc{},\;\;[b,P]= 0 \;\mbox{in}\;\dpri.
\end{equation}
We introduce the set 
\[
H_Q = \{ A = A(y)\;:\; \inty{Q(y) A(y) : A(y) Q(y) } < +\infty\}
\]
where $Q = P ^{-1}$, and the scalar product 
\[
(A,B)_Q = \inty{QA:BQ},\;\;A, B \in H_Q.
\]
The equality \eqref{Equ16} suggests to introduce the family of applications $G(s): H_Q \to H_Q$, $s \in \R$, $G(s)A = (\partial _y Y )^{-1}(s; \cdot) A(Y(s;\cdot)) \;^t (\partial _y Y )^{-1}(s;\cdot)$ which is a $C^0$-group of unitary operators on $H_Q$ cf. Proposition \ref{Groupe}. This allows us to introduce $L$, the infinitesimal generator of $(G(s))_{s\in \R}$. The operator $L$ is skew-adjoint on $H_Q$ and its kernel coincides with $\{A \in H_Q\subset \loloc{} : [b,A] = 0\;\mbox{in} \; \dpri\}$ cf. Proposition \ref{PropOpeL}. The averaged matrix field denoted $\ave{D}_Q$, associated to any $D \in H_Q$ appears as the long time limit of the solution of
\begin{equation}
\label{Equ67}
\partial _t A - L(L(A)) = 0,\;\;t \in \R_+
\end{equation}
\begin{equation}
\label{Equ68} A(0) = D.
\end{equation}
The notation $\ave{\cdot}$ stands for the orthogonal projection (in $\lty{}$) on $\kerbg{}$.
\begin{thm}
\label{AveMatDif} Assume that \eqref{Equ21}, \eqref{Equ22}, \eqref{Equ56} hold true. Then for any $D \in H_Q \cap \liy{}$ the solution of \eqref{Equ67}, \eqref{Equ68} converges weakly in $H_Q$ as $t \to +\infty$ towards the orthogonal projection of $D$ on $\ker L$
\[
\lim _{t \to +\infty} A(t) = \ave{D}_Q\;\mbox{ weakly in }\;H_Q,\;\;\ave{D}_Q := \mathrm{Proj} _{\ker L } D.
\]
If $D$ is symmetric and positive, then so is the limit $\ave{D}_Q = \lim _{t \to +\infty} A(t)$, and satisfies
\begin{equation}
\label{Equ72} L (\ave{D}_Q) = 0,\;\;\nabla _y u \cdot \ave{D}_Q \nabla _y v = \ave{\nabla _y u \cdot D\nabla _y v},\;\;u, v \in H^1(\R^m) \cap \kerbg{}
\end{equation}
\begin{equation}
\label{Equ72Bis}\ave{\nabla _y u \cdot \ave{D}_Q \nabla _y (b \cdot \nabla _y \psi )} = 0,\;\;u \in  H^1(\R^m) \cap \kerbg{},\;\;\psi \in C^2_c (\R^m).
\end{equation}
\end{thm}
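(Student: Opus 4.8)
The plan is to read \eqref{Equ67} as the heat flow generated by the operator $L^2$, which is self-adjoint and $\le 0$ on $H_Q$ (indeed $L^2=-L^{*}L$ since $L^{*}=-L$), and to exploit the three incarnations of \emph{averaging along the flow of} $b$ that are available: the skew-adjointness of $L$, the mean ergodic theorem for the unitary group $(G(s))_{s\in\R}$, and the identification of $\ave{\cdot}$ with the flow average in Proposition \ref{AverageOperator}. Observe first that $\ker L^2=\ker L$: if $L^2A=0$ then $0=(L^2A,A)_Q=-\|LA\|_Q^2$, so $LA=0$. For the convergence, I would pair \eqref{Equ67} with $A(t)$ in $H_Q$ and use $L^{*}=-L$ to obtain $\tfrac12\frac{\md}{\md t}\|A(t)\|_Q^2=(A,L^2A)_Q=-\|LA(t)\|_Q^2$, hence $\|A(t)\|_Q\le\|D\|_Q$ and $\int_0^{+\infty}\|LA(t)\|_Q^2\,\md t\le\tfrac12\|D\|_Q^2$. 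Differentiating once more, $\frac{\md}{\md t}\|LA(t)\|_Q^2=-2\|L^2A(t)\|_Q^2\le0$, so $\|LA(t)\|_Q$ is nonincreasing, which together with the integrability forces $t\,\|LA(t)\|_Q^2\le\int_0^t\|LA(s)\|_Q^2\,\md s\le\tfrac12\|D\|_Q^2$, i.e. $LA(t)\to0$ in $H_Q$ (for $t>0$ these computations are legitimate because the analytic semigroup $e^{tL^2}$ maps $H_Q$ into $\dom L^2$). On the other hand, for $\phi\in\ker L$ one has $\frac{\md}{\md t}(A(t),\phi)_Q=(L^2A,\phi)_Q=(A,L^2\phi)_Q=0$, so the projection of $A(t)$ onto $\ker L$ equals $\ave{D}_Q$ for every $t$. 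Since $(A(t))_t$ is bounded and, for $C\in\dom L$, $(A(t),LC)_Q=-(LA(t),C)_Q\to0$, the component of $A(t)$ in $(\ker L)^\perp=\overline{\mathrm{Range}\,L}$ tends weakly to $0$; with the conserved kernel component this yields $A(t)\rightharpoonup\ave{D}_Q$ in $H_Q$.

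For symmetry and positivity, I would invoke the von Neumann mean ergodic theorem for $(G(s))_s$, whose fixed-point subspace is exactly $\ker L$: the means $M_T:=\frac1{2T}\int_{-T}^{T}G(s)D\,\md s$ converge to $\mathrm{Proj}_{\ker L}D=\ave{D}_Q$ in $H_Q$. As $G(s)A=(\dyy)^{-1}A(\ysy)\,{}^{t}(\dyy)^{-1}$ is a congruence, it preserves pointwise both symmetry and positivity, so each $M_T$ inherits these from $D$ and so does the limit $\ave{D}_Q$, both being weakly closed conditions. Turning to \eqref{Equ72}, the relation $L(\ave{D}_Q)=0$ is immediate since $\ave{D}_Q\in\ker L$. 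The heart of the second identity is the bridge formula: differentiating $u(\ysy)=u(y)$ (valid as $u\in\kerbg$) gives $\nabla_y u(\ysy)={}^{t}(\dyy)^{-1}\nabla_y u(y)$, whence for $u,v\in\hoy\cap\kerbg$ and any $A\in H_Q$,
\[
(\nabla_y u\cdot A\,\nabla_y v)(\ysy)=\nabla_y u(y)\cdot (G(s)A)(y)\,\nabla_y v(y).
\]
Averaging in $s$ over $(-T,T)$ and letting $T\to+\infty$, the left-hand side tends to $\ave{\nabla_y u\cdot D\nabla_y v}$ by Proposition \ref{AverageOperator}, while the right-hand side equals $\nabla_y u\cdot M_T\nabla_y v$ and tends to $\nabla_y u\cdot\ave{D}_Q\nabla_y v$; matching the two limits yields \eqref{Equ72}.

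For \eqref{Equ72Bis} I would set $w=b\cdot\nabla_y\psi$ and test against an arbitrary $\phi\in\kerbg$; using the symmetry of $\ave{D}_Q$ and integration by parts,
\[
\inty{\phi\,\nabla_y u\cdot\ave{D}_Q\nabla_y w}=-\inty{w\,\bigl(\nabla_y\phi\cdot\ave{D}_Q\nabla_y u+\phi\,\divy(\ave{D}_Q\nabla_y u)\bigr)}.
\]
Both $\nabla_y\phi\cdot\ave{D}_Q\nabla_y u$ (by the bridge formula and $G(s)\ave{D}_Q=\ave{D}_Q$) and $\divy(\ave{D}_Q\nabla_y u)$ (because $[b,\ave{D}_Q]=0$ and $b\cdot\nabla_y u=0$ force $b\cdot\nabla_y\divy(\ave{D}_Q\nabla_y u)=\divy(\ave{D}_Q\nabla_y(b\cdot\nabla_y u))=0$) belong to $\kerbg$, and multiplication by $\phi\in\kerbg$ keeps them there; each resulting term is of the form $\inty{(b\cdot\nabla_y\psi)\,k}=-\inty{\psi\,\divy(bk)}=0$ for $k\in\kerbg$, since $\divy b=0$. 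As $\phi$ was arbitrary, the projection $\ave{\,\cdot\,}$ of $\nabla_y u\cdot\ave{D}_Q\nabla_y w$ vanishes.

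The energy estimates above and the integrations by parts for \eqref{Equ72Bis} are routine. I expect the main obstacle to be the limit identification in \eqref{Equ72}: the ergodic convergence $M_T\to\ave{D}_Q$ holds in the $H_Q$-norm, whereas it must be inserted into the \emph{pointwise} quadratic form $\nabla_y u\cdot(\cdot)\,\nabla_y v$. This is precisely where hypothesis \eqref{Equ56} is used: with $P=Q^{-1}\in\ltloc$, a Cauchy--Schwarz estimate with the weights $Q$ and $P$ dominates the pointwise form by the $H_Q$-norm on each compact set, so that $\nabla_y u\cdot M_T\nabla_y v\to\nabla_y u\cdot\ave{D}_Q\nabla_y v$ in $L^1_{\mathrm{loc}}(\R^m)$ (along an almost everywhere convergent subsequence), which is exactly what allows the scalar flow average and the matrix average to be matched.
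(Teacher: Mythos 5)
Your overall strategy is sound and in several places genuinely different from the paper's. The convergence argument is essentially the paper's (energy identity, conservation of the $\ker L$ component, weak vanishing of the component in $(\ker L)^\perp=\overline{\ran L}$), though your monotonicity observation $\frac{\md}{\md t}|LA(t)|_Q^2=-2|L^2A(t)|_Q^2\le 0$ upgrades the paper's ``$L(A(t_k))\to 0$ along a subsequence'' to strong convergence of $LA(t)$ for the whole family, a small improvement. The real divergence is in the second half: for symmetry and positivity the paper constructs the $Q$-weighted positive/negative parts $A^{Q\pm}$, proves they preserve $\dom(L)$ (Proposition \ref{InvPosNeg}) and runs a Gronwall-type argument on $|A^{Q-}(t)|_Q$, whereas you invoke von Neumann's mean ergodic theorem for the unitary group $(G(s))_s$ and note that each $G(s)$ is a pointwise congruence; this is shorter and bypasses Proposition \ref{InvPosNeg} entirely, at the price of introducing a second representation of $\ave{D}_Q$ (as $\lim_T M_T$ rather than $\lim_{t}A(t)$), which is legitimate since both equal $\mathrm{Proj}_{\ker L}D$. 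For \eqref{Equ72} the paper tests the orthogonality $(D-\ave{D}_Q,U)_Q=0$ against the kernel element $U=\varphi\,Q^{-1}\nabla_y u\otimes\nabla_y u\,Q^{-1}$ and polarizes, while you match the flow average of the scalar $\nabla_y u\cdot D\nabla_y v$ with the ergodic means $M_T$; both hinge on the same identity $\nabla_y u={^t \dyy}(\nabla_y u)_s$.

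Two gaps remain. First, in \eqref{Equ72} your Cauchy--Schwarz domination of the pointwise quadratic form by the $H_Q$-norm requires $\nabla_y u\cdot P\nabla_y u$ to be square integrable on compacts, which is not available for a general $u\in\hoy{}$ when only $P\in\ltloc{}$ is assumed; like the paper, you must first establish the identity for $u$ with $\inty{(\nabla_y u\cdot Q^{-1}\nabla_y u)^2}<+\infty$ and then use $D\in\liy{}$ (which is in the hypotheses) to extend to all of $\hoy{}\cap\kerbg{}$. Second, and more seriously, your treatment of \eqref{Equ72Bis} integrates by parts against an arbitrary $\phi\in\kerbg{}$: such a $\phi$ is merely an $L^\infty$ (or $L^2$) function, so $\nabla_y\phi$ has no meaning, and $\divy(\ave{D}_Q\nabla_y u)$ need not be a function for $u\in\hoy{}$ and $\ave{D}_Q\in H_Q$; the step as written does not go through. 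The repair is the paper's route: observe that $B:=\phi\,\ave{D}_Q\in\ker L$ (no derivative of $\phi$ is needed, only $\phi_s=\phi$), and apply the weak characterization of $[b,B]=0$ (statement 3 of Proposition \ref{WMFI}) to get $\inty{B\nabla_y u\cdot\nabla_y(b\cdny\psi)}=-\inty{B\nabla_y(b\cdny u)\cdot\nabla_y\psi}=0$ because $b\cdny u=0$.
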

The first order approximation (for initial data not necessarily well prepared) is justified by
\begin{thm}
\label{MainResult1}  Assume that \eqref{Equ21}, \eqref{Equ22}, \eqref{Equ56}, \eqref{Equ26} hold true and that $D$ is a field of symmetric positive matrix, which belongs to $H_Q$. Consider a family of initial conditions $(\uein)_{\eps } \subset \lty$ such that $(\ave{\uein})_\eps$ converges weakly in $\lty{}$, as $\eps \searrow 0$, towards some function $\uin$. We denote by $\ue$ the solution of \eqref{Equ1}, \eqref{Equ2} and by $u$ the solution of
\begin{equation}
\label{Equ75}
\partial _t u - \divy ( \ave{D}_Q \nabla _y u ) = 0,\;\;t \in \R_+,\;\;y \in \R^m
\end{equation}
\begin{equation}
\label{Equ76}
u(0,y) = \uin (y),\;\;y \in \R^m
\end{equation}
where $\ave{D}_Q$ is associated to $D$, cf. Theorem \ref{AveMatDif}. Then we have the convergences
\[
\lime \ue = u\;\;\mbox{weakly} \star \mbox{ in } \litlty{}
\]
\[
\lime \nabla _y \ue = \nabla _y u\;\;\mbox{weakly} \mbox{ in } \lttlty{}.
\]
\end{thm}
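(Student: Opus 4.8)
The plan is to derive uniform a priori bounds on $(\ue)_\eps$, extract weak limits, identify the equation satisfied by the limit, and conclude by a uniqueness argument. First I would establish the energy estimate. Testing \eqref{Equ1} against $\ue$ and using $\divy b = 0$ (so that $\divy(b\otimes b\nabla_y\ue) = (b\cdot\nabla_y)^2\ue$) together with the coercivity \eqref{Equ3} yields, for $0<\eps\leq 1$,
\[
\frac12\inty{(\ue(t,y))^2} + d\intsy{|\nabla_y\ue|^2} + \frac{1-\eps}{\eps}\intsy{(b\cdot\nabla_y\ue)^2} \leq \frac12\inty{(\uein)^2}.
\]
Provided $(\uein)_\eps$ is bounded in $\lty$ (which I read off from the hypotheses), this gives uniform bounds for $(\ue)_\eps$ in $\litlty$ and for $(\nabla_y\ue)_\eps$ in $\lttlty$, and moreover $\|b\cdot\nabla_y\ue\|_{\lttlty}=O(\sqrt\eps)$. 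By weak-$\star$ and weak compactness I extract a subsequence $\epsk\searrow 0$ with $\uek\rightharpoonup u$ weakly $\star$ in $\litlty$ and $\nabla_y\uek\rightharpoonup\nabla_y u$ weakly in $\lttlty$. Since $b\cdot\nabla_y\uek\to 0$ and $b\cdot\nabla_y$ has weakly closed graph, the limit satisfies $b\cdot\nabla_y u(t,\cdot)=0$, i.e. $u(t)\in\kerbg$ for a.e. $t$.

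Next I would identify the equation solved by $u$. I test \eqref{Equ1} against $\theta(t)\varphi(y)$ with $\theta\in C^1_c(\R_+)$ and $\varphi\in\hoy\cap\kerbg$. The crucial point is that the singular term drops out, since $b\cdot\nabla_y\varphi=0$; the remaining terms pass to the limit by the weak convergences above, and the initial datum is handled via $\inty{\uekin\varphi}=\inty{\ave{\uekin}\varphi}\to\inty{\uin\varphi}$, using that $\ave{\cdot}$ is the orthogonal projection onto $\kerbg$ and $\ave{\uekin}\rightharpoonup\uin$. This produces the constrained weak formulation \eqref{Equ9} with datum $\uin$. I then replace $D$ by $\ave{D}_Q$: combining the symmetry of $\ave{D}_Q$, the identity \eqref{Equ72}, and the fact that $\ave{\cdot}$ preserves integrals (a consequence of the measure-preserving character of the flow $Y$), I obtain $\inty{D\nabla_y u\cdot\nabla_y\varphi}=\inty{\ave{D}_Q\nabla_y u\cdot\nabla_y\varphi}$, so that $u$ solves the weak form of \eqref{Equ75}--\eqref{Equ76} against every $\varphi\in\hoy\cap\kerbg$.

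The main obstacle is to remove the constraint on the test functions, i.e. to show that $u$ solves \eqref{Equ75}--\eqref{Equ76} against arbitrary $\psi\in\hoy$. The idea is that the residual $\partial_t u - \divy(\ave{D}_Q\nabla_y u)$ lies in $\kerbg$, so only the kernel component $\ave{\psi}$ of a test function contributes. Concretely, writing $\psi=\ave{\psi}+(\psi-\ave{\psi})$ with $\psi-\ave{\psi}\in(\kerbg)^\perp=\overline{\ran(b\cdot\nabla_y)}$, the zeroth-order terms vanish on the orthogonal part because $u(t),\uin\in\kerbg$, while the diffusion term vanishes there by \eqref{Equ72Bis}: for $\psi\in C^2_c$, symmetry of $\ave{D}_Q$ and integral preservation give $\inty{\ave{D}_Q\nabla_y u\cdot\nabla_y(b\cdot\nabla_y\psi)}=\inty{\ave{\nabla_y u\cdot\ave{D}_Q\nabla_y(b\cdot\nabla_y\psi)}}=0$, which extends by density to all of $(\kerbg)^\perp\cap\hoy$. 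Hence the constrained and unconstrained formulations coincide.

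Finally, since for $u\in\kerbg$ one has $\inty{\ave{D}_Q\nabla_y u\cdot\nabla_y u}=\inty{D\nabla_y u\cdot\nabla_y u}\geq d\inty{|\nabla_y u|^2}$ by \eqref{Equ72} and \eqref{Equ3} (recall $b\cdot\nabla_y u=0$), the limit problem is coercive on the constrained space and hence uniquely solvable; as every subsequence yields the same limit $u$, the whole family $(\ue)_\eps$ converges, which establishes both stated convergences. I expect the delicate technical points to be the $\hoy$-density of $\{b\cdot\nabla_y\chi\}$ in $(\kerbg)^\perp\cap\hoy$ and the boundedness of $\ave{\cdot}$ on $\hoy$, both needed to make the decomposition of $\psi$ in the third paragraph legitimate.
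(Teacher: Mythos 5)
Your proposal is correct and follows essentially the same route as the paper's proof: uniform energy estimates, extraction of weak limits, identification of the constraint $u(t)\in\kerbg{}$, passage to the limit against constrained test functions, substitution of $D$ by $\ave{D}_Q$ via \eqref{Equ72}, removal of the constraint via \eqref{Equ72Bis} applied to test functions $b\cdot\nabla_y\psi$, and conclusion by uniqueness of the limit problem. The two points you flag as delicate (the implicit $\lty{}$-boundedness of $(\uein)_\eps$ behind the energy estimate, and the density needed to pass from test functions in $(\hoy\cap\kerbg{})\cup\{b\cdot\nabla_y\psi\}$ to all of $\hoy$) are present but left implicit in the paper's proof as well.
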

The derivation of the second order approximation is more complicated and requires the computation of some other matrix fields. For simplicity, we content ourselves to formal results. The crucial point is to introduce the decomposition given by
\begin{thm}
\label{Decomposition} Assume that \eqref{Equ21}, \eqref{Equ22}, \eqref{Equ56}, \eqref{Equ26} hold true and that $L$ has closed range. Then, for any field of symmetric matrix $D \in H_Q$, there is a unique field of symmetric matrix $F \in \dom (L^2) \cap (\ker L )^\perp$ such that 
\[
- \divy ( D \nabla _y) = - \divy ( \ave{D}_Q \nabla _y ) + \divy (L^2 (F)\nabla _y )
\]
that is
\begin{align*}
& \inty{D \nabla _y u \cdot \nabla _y v }  - \inty{\ave{D}_Q\nabla _y u \cdot \nabla _y v } \\
& = \inty{L(F) \nabla _y u \cdot \nabla _y (b \cdot \nabla _y v)} + \inty{L(F) \nabla _y ( b \cdot \nabla _y u ) \cdot \nabla _y v} \\
& = - \inty{F \nabla _y ( b \cdot \nabla _y ( b \cdot \nabla _y u)) \cdot \nabla _y v} - 2 \inty{F \nabla _y ( b \cdot \nabla _y u) \cdot \nabla _y ( b \cdot \nabla _y v)}\\
& - \inty{F \nabla _y u \cdot \nabla _y ( b \cdot \nabla _y ( b \cdot \nabla _y v))}
\end{align*}
for any $u, v \in C^3_c(\R^m)$.
\end{thm}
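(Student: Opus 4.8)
The plan is to reduce the three displayed equalities to integration by parts, and to recognize that the whole substance of the theorem is the unique solvability, among symmetric fields of $\dom (L^2) \cap (\ker L)^\perp$, of the single equation $L^2(F) = \ave{D}_Q - D$ in $H_Q$; the field $F$ is then produced by the pseudo-inverse of $L$, its symmetry being inherited through the transpose involution.

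First I would dispose of the chain of equalities, which is pure integration by parts. Recall that the infinitesimal generator of $(G(s))_{s \in \R}$ acts by $L(A) = [b,A]$, so that Proposition \ref{ComSecOrd} reads $\divy ( L(A) \nabla _y ) = [\bg , \divy (A \nabla _y)]$, and that $\bg$ is skew-adjoint on $\lty$ because $\divy b = 0$, i.e. $\inty{(\bg f)\,g} = - \inty{f\,(\bg g)}$. Since the first line equals $-\inty{L^2(F) \nabla _y u \cdot \nabla _y v}$, one integration by parts turns $-\divy (L(B) \nabla _y u)$, with $B := L(F)$, into $[\bg , \divy (B \nabla _y)]u$; redistributing the single copy of $\bg$ between $u$ and $v$ via skew-adjointness yields exactly the two $L(F)$ terms of the second line. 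Peeling off a second copy of $\bg$ in the same manner moves the remaining derivative onto $u$ or $v$ and produces the three $F$ terms, the cross term occurring twice. It therefore suffices to solve $L^2(F) = \ave{D}_Q - D$ with $F$ symmetric in $\dom (L^2) \cap (\ker L)^\perp$.

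Second, the solvability. By Proposition \ref{PropOpeL}, $L$ is skew-adjoint on $H_Q$ with $\ker L = \{A \in H_Q : [b,A]=0 \text{ in } \dpri\}$, and by Theorem \ref{AveMatDif} the averaged field is the orthogonal projection $\ave{D}_Q = \mathrm{Proj}_{\ker L} D$, so that $g := \ave{D}_Q - D = - \mathrm{Proj}_{(\ker L)^\perp} D$ lies in $(\ker L)^\perp$. Under the closed range hypothesis, $(\ker L)^\perp = \ran L$ and the restriction of $L$ to $(\ker L)^\perp \cap \dom L$ is a bijection onto $(\ker L)^\perp$ with bounded inverse $L^\dagger$. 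Since $-L^2 = L^* L$ with $\ran L$ closed, the classical identity $\ran (L^*L) = \ran (L^*)$ gives $\ran (L^2) = \ran L = (\ker L)^\perp \ni g$; hence $F := (L^\dagger)^2 g \in (\ker L)^\perp$ is well defined, belongs to $\dom (L^2)$ because $L F = L^\dagger g \in \dom L$, and satisfies $L^2(F) = g$. Uniqueness follows from $\ker L^2 = \ker L$ (for skew-adjoint $L$, $L^2 A = 0$ forces $\|LA\|_Q^2 = -(L^2 A, A)_Q = 0$): two admissible solutions differ by an element of $\ker L \cap (\ker L)^\perp = \{0\}$.

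Third, symmetry. The transpose involution $\iota (A) = {}^t A$ is an isometry of $H_Q$, as a short computation using ${}^t Q = Q$ and the cyclicity of the trace shows $\|{}^t A\|_Q = \|A\|_Q$, and it commutes with each $G(s)$ because $G(s)A = (\partial _y Y)^{-1} A(Y) \,{}^t(\partial _y Y)^{-1}$ preserves symmetry. Consequently $\iota$ commutes with $L$, with $\mathrm{Proj}_{\ker L}$, and with the bounded $L^\dagger$; as $D$ is symmetric, so are $\ave{D}_Q$ and $g$, whence $\iota F = (L^\dagger)^2 \iota g = (L^\dagger)^2 g = F$. I expect the main obstacle to be the functional-analytic bookkeeping for the unbounded skew-adjoint $L$: turning the closed range hypothesis into the equality $\ran (L^2) = (\ker L)^\perp$ with a well-defined bounded $L^\dagger$, and verifying that a double application of $L^\dagger$ stays in $\dom (L^2)$. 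Everything else is routine once these facts are secured.
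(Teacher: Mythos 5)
Your proposal is correct and follows essentially the same route as the paper: both reduce the theorem to solving $L^2(F)=\ave{D}_Q-D$ in $\dom(L^2)\cap(\ker L)^\perp$, use skew-adjointness and the closed-range hypothesis to get $\ran L^2=\ran L=(\ker L)^\perp$ (your appeal to $\ran(L^*L)=\ran(L^*)$ is just the paper's two-step argument in disguise), obtain symmetry from the compatibility of transposition with $L$ and the projections, and derive the integral identities by applying Proposition \ref{PropOpeL}(4) twice. Only cosmetic differences remain (pseudo-inverse notation versus the paper's explicit lifting, and commutation of the transpose involution versus the paper's uniqueness argument with ${}^tF=L({}^tC)$).
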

After some computations we obtain, at least formally, the following model, replacing the hypothesis \eqref{Equ56} by the stronger one: there is a matrix field $R(y)$ such that 
\begin{equation}
\label{Equ90}
\det R(y)\neq 0,\;y \in \R^m,\;Q = {^t R} R \mbox{ and }P = Q^{-1} \in \ltloc{},\;b \cdot \nabla _y R + R \partial _y b = 0 \mbox{ in } \dpri. 
\end{equation}
\begin{thm}
\label{MainResult2} 
Assume that \eqref{Equ21}, \eqref{Equ22}, \eqref{Equ23}, \eqref{Equ26}, \eqref{Equ90} hold true and that $D$ is a field of symmetric positive matrix which belongs to $H_Q \cap \liy{}$. Consider a family of initial conditions $(\uein)_\eps \subset \lty{}$ such that $(\frac{\ave{\uein} - \uin }{\eps} ) _{\eps >0}$ converges weakly in $\lty{}$, as $\eps \searrow 0$, towards a function $\vin{}$, for some function $\uin \in \kerbg{}$. Then, a second order approximation for \eqref{Equ1} is provided by
\begin{equation}
\label{IntroEqu87}\partial _t \tue - \divy ( \ave{D}_Q \nabla _y \tue) + \eps [ \divy ( \ave{D}_Q \nabla _y ), \divy (F \nabla _y ) ]\tue - \eps S(\tue) = 0,\;\;(t,y) \in \R_+ \times \R^m
\end{equation}
\begin{equation}
\label{NewIC} \tue (0,y) = \uin (y) + \eps ( \vin (y) + \win (y)),\;\;\win = \divy ( F \nabla _y \uin),\;\;y \in \R ^m
\end{equation}
for some fourth order linear differential operator $S$, see Proposition \ref{DifOpe}, and the matrix field $F$ given by Theorem \ref{Decomposition}.
\end{thm}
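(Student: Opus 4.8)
My plan is a formal two-scale expansion, closed at second order by the decomposition of Theorem~\ref{Decomposition}. Insert the Ansatz $\ue = u + \eps u^1 + \eps ^2 u^2 + \dots$, with $u(t,\cdot) \in \kerbg{}$, into \eqref{Equ1} and collect the powers of $\eps$. Since $\divy b = 0$ one has $\divy (b\otimes b\nabla _y w) = \bg (\bg w)$ for every $w$, so the order $\eps ^{-1}$ reproduces the constraint $\bg (\bg u) = 0$, i.e. $u(t,\cdot) \in \kerbg{}$; the order $\eps ^0$ gives
\[
\partial _t u - \divy (D\nabla _y u) - \bg (\bg u^1) = 0;
\]
and the order $\eps ^1$ gives $\partial _t u^1 - \divy (D\nabla _y u^1) - \bg (\bg u^2) = 0$. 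Projecting the order $\eps ^0$ identity on $\kerbg{}$, using that $\bg (\bg u^1) \in \ran (\bg) = (\kerbg{})^\perp$ and the averaging identity \eqref{Equ72} of Theorem~\ref{AveMatDif}, which for $w \in \kerbg{}$ is equivalent to $\ave{\divy (D\nabla _y w)} = \divy (\ave{D}_Q \nabla _y w)$, I recover the leading model \eqref{Equ75}, namely $\partial _t u = \divy (\ave{D}_Q \nabla _y u)$.

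Next I determine $u^1$. Subtracting \eqref{Equ75} from the order $\eps ^0$ identity and invoking the decomposition $-\divy (D\nabla _y) = -\divy (\ave{D}_Q \nabla _y) + \divy (L^2 (F) \nabla _y)$ of Theorem~\ref{Decomposition}, the right hand side collapses to
\[
\bg (\bg u^1) = \divy (L^2 (F) \nabla _y u).
\]
Using the commutator formula of Proposition~\ref{ComSecOrd}, which identifies $[\bg , \divy (A\nabla _y)]$ with $\divy ([b,A]\nabla _y) = \divy (L(A)\nabla _y)$, iterating it once and using the constraint $\bg u = 0$, I check the algebraic identity $\bg (\bg \,\divy (F\nabla _y u)) = \divy (L^2 (F)\nabla _y u)$. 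Hence $u^1 = \divy (F\nabla _y u) + \ave{u^1}$, where the non-averaged part $\divy (F\nabla _y u)$ is explicit and $\ave{u^1} \in \kerbg{}$ is still free; its value at $t=0$ is the corrector $\win = \divy (F\nabla _y \uin)$ of \eqref{NewIC}.

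I then close the averaged part. Projecting the order $\eps ^1$ identity on $\kerbg{}$ yields
\[
\partial _t \ave{u^1} = \ave{\divy (D\nabla _y u^1)} = \divy (\ave{D}_Q \nabla _y \ave{u^1}) + \ave{\divy (D\nabla _y \,\divy (F\nabla _y u))}.
\]
Setting $\tue := u + \eps u^1 = u + \eps \,\divy (F\nabla _y u) + \eps \ave{u^1}$, I differentiate in $t$, substitute \eqref{Equ75} and the line above, and replace $u$ by $\tue$ inside every $O(\eps)$ term (an $O(\eps ^2)$ modification). The contributions $\pm \eps \,\divy (\ave{D}_Q \nabla _y \ave{u^1})$ cancel; the two surviving second order over second order terms $\eps \,\divy (F\nabla _y \,\divy (\ave{D}_Q \nabla _y \tue))$ and $-\eps \,\divy (\ave{D}_Q \nabla _y \,\divy (F\nabla _y \tue))$ combine, after transposition to the form of \eqref{IntroEqu87}, into the commutator $\eps [\divy (\ave{D}_Q \nabla _y), \divy (F\nabla _y)]\tue$; and the averaged fourth order remainder $\eps \ave{\divy (D\nabla _y \,\divy (F\nabla _y \tue))}$ becomes $\eps S(\tue)$. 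This produces \eqref{IntroEqu87}. Finally $\tue (0,\cdot) = \uin + \eps (\win + \ave{u^1}(0,\cdot))$, and the hypothesis $\eps ^{-1}(\ave{\uein} - \uin) \rightharpoonup \vin$ fixes $\ave{u^1}(0,\cdot) = \vin$, giving \eqref{NewIC}.

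The main obstacle is the identification of $S$. The raw remainder $\ave{\divy (D\nabla _y \,\divy (F\nabla _y \cdot))}$ still carries the non-local projector $\ave{\cdot}$, whereas the statement asserts that $S$ is a genuine fourth order \emph{differential} operator, see Proposition~\ref{DifOpe}. Converting the averaged expression into a local operator is exactly where the reinforced hypothesis \eqref{Equ90} enters: the factorization $Q = {^t R} R$ with $\bg R + R\partial _y b = 0$ makes $R$ invariant along the flow of $b$, which allows one to commute $\ave{\cdot}$ through the composition $\divy (D\nabla _y \,\divy (F\nabla _y \cdot))$ and to collect the surviving terms, with the help of \eqref{Equ72} and \eqref{Equ72Bis}, into an explicit differential operator. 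Separating cleanly the third order commutator from this fourth order part, verifying that the discarded pieces are indeed $O(\eps ^2)$, and matching the outcome with the operator of Proposition~\ref{DifOpe} is the delicate bookkeeping; the rest of the argument, being a formal expansion, is routine.
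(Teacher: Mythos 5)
Your proposal is correct and follows essentially the same route as the paper: formal expansion, closure of the leading order via Theorem~\ref{AveMatDif}, identification of $u^1 = \divy(F\nabla_y u) + \ave{u^1}$ through Theorem~\ref{Decomposition}, the commutator recombination, and the conversion of the averaged fourth-order remainder into the local operator $S$. The only step you leave implicit is the paper's intermediate identity \eqref{Equ87}, namely $\ave{\divy(D\nabla_y(\divy(F\nabla_y u)))} = \ave{\divy(E\nabla_y(\divy(E\nabla_y u)))}$ with $E = L(F)$ (proved by noting that $\divy(\ave{D}_Q\nabla_y)$ preserves the zero-average subspace and that $\divy((D-\ave{D}_Q)\nabla_y) = [b\cdot\nabla_y, -\divy(E\nabla_y)]$), which is what puts the remainder in the symmetric form to which Proposition~\ref{DifOpe} and its change of derivations $\nabla^R = {}^tR^{-1}\nabla_y$ apply directly.
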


\section{The average operator}
\label{AveOpe}
\noindent
We assume that the vector field $b : \R^m \to \R^m$ satisfies \eqref{Equ21}, \eqref{Equ22}. We consider the linear operator $u \to b \cdot \nabla _y u = \divy(ub)$ in $\lty{}$, whose domain is defined by
\[
\dom (b \cdot \nabla _y ) = \{ u \in \lty{} \;:\; \divy(ub) \in \lty\}.
\]
It is well known that 
\[
\kerbg = \{ u \in \lty{}\;:\; u (Y(s;\cdot)) = u (\cdot), \;s \in \R\}.
\]
The orthogonal projection on $\kerbg{}$ (with respect to the scalar product of $\lty{}$), denoted by $\ave{\cdot}$, reduces to average along the characteristic flow $Y$ cf. \cite{BosTraSin} Propositions 2.2, 2.3.
\begin{pro}
\label{AverageOperator} For any function $u \in \lty{}$ the family $\ave{u}_T : = \frac{1}{T} \int _0 ^T u (Y(s;\cdot))\md s, T>0$ converges strongly in $\lty{}$, when $T \to + \infty$, towards the orthogonal projection of $u$ on $\kerbg{}$
\[
\lim _{T \to +\infty} \ave{u}_T = \ave{u},\;\;\ave{u} \in \kerbg{} \;\mbox{and} \; \inty{(u - \ave{u}) \varphi } = 0,\;\forall\; \varphi \in \kerbg{}.
\]
\end{pro}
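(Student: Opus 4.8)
The plan is to prove Proposition \ref{AverageOperator} by recognizing $\ave{u}_T$ as the ergodic (Cesàro) average of a $C^0$-group of isometries and then invoking the mean ergodic theorem. First I would verify that the family of operators $U(s): \lty \to \lty$ defined by $U(s)u = u(Y(s;\cdot))$ forms a $C^0$-group of unitary operators on $\lty$. Unitarity is exactly where the hypothesis $\divy b = 0$ enters: since the flow $Y(s;\cdot)$ is measure preserving (stated after \eqref{EquFlow}), the change of variables $y \mapsto Y(s;y)$ has unit Jacobian, so $\inty{|u(Y(s;y))|^2} = \inty{|u(y)|^2}$, giving $\|U(s)u\|_{\lty} = \|u\|_{\lty}$. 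The group property $U(s_1)U(s_2) = U(s_1+s_2)$ follows from the flow identity $Y(s_1; Y(s_2;y)) = Y(s_1+s_2;y)$, and strong continuity $s \mapsto U(s)u$ follows first for $u$ smooth and compactly supported (using the $W^{1,\infty}_{\mathrm{loc}}$ regularity of $Y$) and then by density, using the uniform bound $\|U(s)\| = 1$.

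Once $(U(s))_{s\in\R}$ is a $C^0$-group of isometries, its infinitesimal generator is precisely $b\cdot\nabla_y = \divy(u\,b)$ with the domain given in the excerpt, and it is skew-adjoint. The key step is then the von Neumann mean ergodic theorem: for a $C^0$-group of unitary operators on a Hilbert space, the Cesàro averages $\frac{1}{T}\int_0^T U(s)u\,\md s$ converge strongly as $T\to+\infty$ to the orthogonal projection of $u$ onto the fixed-point subspace $\{u : U(s)u = u,\ s\in\R\}$. That fixed-point subspace is exactly $\ker(b\cdot\nabla_y)$ by the characterization $\kerbg = \{u \in \lty : u(Y(s;\cdot)) = u(\cdot),\ s\in\R\}$ recalled just above the statement. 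This identifies $\lim_{T\to+\infty}\ave{u}_T$ with $\mathrm{Proj}_{\kerbg} u =: \ave{u}$, and the orthogonality relation $\inty{(u - \ave{u})\varphi} = 0$ for all $\varphi \in \kerbg$ is simply the defining property of the orthogonal projection.

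For completeness I would sketch the proof of the mean ergodic theorem in this self-adjoint-generator setting rather than cite it as a black box, since it is short. Write $H = \lty$ and let $V = \ker(b\cdot\nabla_y)$ be the fixed subspace, with orthogonal complement $V^\perp$. On $V$ the averages act as the identity, so they converge to the identity. The heart of the argument is on $V^\perp$: one shows $V^\perp = \overline{\ran(L)}$ where $L = b\cdot\nabla_y$ is the generator (this uses skew-adjointness, so that $(\ran L)^\perp = \ker(L^*) = \ker L = V$), and for $u = L w = \frac{\md}{\md s}U(s)w|_{s=0}$ in the range one computes the telescoping identity $\frac{1}{T}\int_0^T U(s)Lw\,\md s = \frac{1}{T}(U(T)w - w)$, whose norm is bounded by $\frac{2\|w\|}{T} \to 0$. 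A density argument together with the uniform bound $\|\frac{1}{T}\int_0^T U(s)\,\md s\| \leq 1$ extends the vanishing limit from $\ran L$ to all of $\overline{\ran L} = V^\perp$.

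The main obstacle I anticipate is not the ergodic averaging itself, which is classical, but the rigorous verification that $b\cdot\nabla_y$ with the stated domain is genuinely the skew-adjoint generator of $(U(s))_{s\in\R}$ under the weak regularity \eqref{Equ21}, \eqref{Equ22}; in particular checking that the domain $\{u \in \lty : \divy(ub) \in \lty\}$ (a distributional condition) coincides with the domain of the generator, and that the identification $(\ran L)^\perp = \ker L$ holds, requires care since $b$ is only $W^{1,\infty}_{\mathrm{loc}}$ with linear growth rather than smooth and bounded. These measure-theoretic and functional-analytic points are presumably handled in the cited references \cite{BosTraSin} (Propositions 2.2, 2.3), on which I would rely, so that the proof reduces to assembling the group structure, the mean ergodic theorem, and the fixed-point characterization of $\kerbg$.
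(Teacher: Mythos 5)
Your argument is correct and is exactly the standard route: the paper itself gives no proof of this proposition, deferring to \cite{BosTraSin} (Propositions 2.2, 2.3), and the von Neumann mean ergodic theorem applied to the unitary group $u\mapsto u(Y(s;\cdot))$ generated by the measure-preserving flow is precisely the argument that reference relies on. The only slip is a wording one: you call it the ``self-adjoint-generator setting'' before correctly using skew-adjointness of $b\cdot\nabla_y$ in the identification $(\ran L)^\perp=\ker L$; the substance is fine.
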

Since $b \cdot \nabla _y$ is antisymmetric, one gets easily
\begin{equation}
\label{Equ24} \overline{\ran (b \cdot \nabla _y ) } = (\kerbg{} ) ^\perp = \ker ( \mathrm{Proj}_{\kerbg{}} ) = \ker \ave{\cdot}.
\end{equation}
\begin{remark}
\label{DetFun} If $u \in \lty{}$ satisfies $\inty{u(y) b \cdot \nabla _y \psi } = 0, \forall \;\psi \in C^1 _c (\R^m)$ and $\inty{u\varphi }= 0, \forall \; \varphi \in \kerbg{}$, then $u = 0$. Indeed, as $u \in \lty{} \subset \loloc{}$, the first condition says that $b \cdot \nabla _y u = 0$ in $\dpri{}$ and thus $u \in \kerbg{}$. Using now the second condition with $\varphi = u$ one gets $\inty{u^2} = 0$ and thus $u = 0$.
\end{remark}
In the particular case when $\ran (b \cdot \nabla _y)$ is closed, which is equivalent to the Poincar\'e inequality (cf. \cite{Brezis} pp. 29)
\begin{equation}
\label{Equ23} \exists\;C_P >0\;:\; \left ( \inty{(u - \ave{u})^2}\right ) ^{1/2} \leq C_P \left ( \inty{(b \cdot \nabla _y u ) ^2} \right ) ^{1/2},\;\;u \in \dom (b \cdot \nabla _y)
\end{equation}
\eqref{Equ24} implies the solvability condition 
\[
\exists \; u \in \dom ( b \cdot \nabla _y ) \;\mbox{ such that }\; b \cdot \nabla _y u = v\;\mbox{ iff } \ave{v} = 0.
\]
If $\|\cdot \|$ stands for the $\lty{}$ norm we have
\begin{pro}
\label{Inverse}
Under the hypothesis \eqref{Equ23}, $b \cdot \nabla _y $ restricted to $\ker \ave{\cdot}$ is one to one map onto $\ker \ave{\cdot}$. Its inverse, denoted $(b \cdot \nabla _y )^{-1}$, belongs to ${\cal L}(\ker \ave{\cdot}, \ker \ave{\cdot})$ and 
\[
\|(b \cdot \nabla _y ) ^{-1} \|_{{\cal L}(\ker \ave{\cdot}, \ker \ave{\cdot})} \leq C_P.
\]
\end{pro}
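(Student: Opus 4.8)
The plan is to combine three facts already available: the identity $\ker\ave{\cdot}=\overline{\ran(\bg)}$ from \eqref{Equ24}, the closedness of $\ran(\bg)$ guaranteed by the Poincar\'e inequality \eqref{Equ23}, and \eqref{Equ23} itself read on functions of vanishing average. Before anything else I would verify that the restriction is well defined as a map into $\ker\ave{\cdot}$: for every $u\in\dom(\bg)$ the element $\bg u$ lies in $\ran(\bg)\subset\overline{\ran(\bg)}=\ker\ave{\cdot}$, so $\bg$ does send $\dom(\bg)\cap\ker\ave{\cdot}$ into $\ker\ave{\cdot}$.

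Injectivity and the norm estimate would then come at once from \eqref{Equ23}. Indeed, any $u\in\dom(\bg)\cap\ker\ave{\cdot}$ has $\ave{u}=0$, so \eqref{Equ23} reads $\|u\|\le C_P\,\|\bg u\|$; this forces $u=0$ whenever $\bg u=0$, which is injectivity. Once surjectivity is in hand, writing $v=\bg u$ and rereading the same inequality as $\|(\bg)^{-1}v\|\le C_P\,\|v\|$ shows $(\bg)^{-1}\in{\cal L}(\ker\ave{\cdot},\ker\ave{\cdot})$ together with the announced bound.

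For surjectivity I would upgrade the density statement \eqref{Equ24} to the genuine equality $\ran(\bg)=\ker\ave{\cdot}$, which is exactly the solvability condition recorded just above and which holds because \eqref{Equ23} makes the range closed. Thus any $v\in\ker\ave{\cdot}$ admits a primitive $u_0\in\dom(\bg)$ with $\bg u_0=v$. To force the primitive into $\ker\ave{\cdot}$, I would subtract its average, setting $u=u_0-\ave{u_0}$: since $\ave{u_0}\in\kerbg\subset\dom(\bg)$ with $\bg\ave{u_0}=0$, we get $u\in\dom(\bg)$, $\bg u=v$ and $\ave{u}=\ave{u_0}-\ave{u_0}=0$, so $u\in\ker\ave{\cdot}$ as required.

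The one substantive point — and thus the expected obstacle — is the step from the mere density $\ker\ave{\cdot}=\overline{\ran(\bg)}$ to the exact surjectivity $\ran(\bg)=\ker\ave{\cdot}$, which rests squarely on the closedness of the range enforced by the Poincar\'e inequality \eqref{Equ23}; injectivity, the norm bound and the average correction are then only bookkeeping with the operator $\ave{\cdot}$.
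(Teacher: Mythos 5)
Your proof is correct and follows exactly the route the paper intends (the paper leaves this proposition without an explicit proof, treating it as a consequence of the closedness of $\ran(\bg)$, the identity \eqref{Equ24}, and the Poincar\'e inequality \eqref{Equ23}): injectivity and the bound from \eqref{Equ23} applied to zero-average elements, surjectivity from $\ran(\bg)=\overline{\ran(\bg)}=\ker\ave{\cdot}$, with the primitive corrected by subtracting its average. Nothing is missing.
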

Another operator which will play a crucial role is ${\cal T } = - \divy (b \otimes b \nabla _y)$ whose domain is
\[
\dom ({\cal T}) = \{ u \in \dom (b \cdot \nabla _y)\;:\; b \cdot \nabla _y u \in \dom ( b \cdot \nabla _y  )\}.
\]
The operator ${\cal T}$ is self-adjoint and under the previous hypotheses, has the same kernel and range as $b\cdot \nabla _y$.

\begin{pro}
\label{KerRanTau} Under the hypotheses \eqref{Equ21}, \eqref{Equ22}, \eqref{Equ23} the operator ${\cal T}$ satisfies
\[
\ker {\cal T} = \kerbg,\;\;\ran {\cal T} = \ran (b \cdot \nabla _y ) = \ker \ave{\cdot}
\]
and $\| u - \ave{u}\| \leq C_P ^2 \|{\cal T} u \|,u \in \dom ({\cal T})$.
\end{pro}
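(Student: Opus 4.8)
The plan is to exploit the divergence-free condition $\divy b = 0$ to recognise ${\cal T}$ as (minus) the square of the first order operator $b \cdot \nabla _y$. Writing $b \otimes b\, \nabla _y u = b\,(b \cdot \nabla _y u)$ and using $\divy b = 0$, one obtains
\[
{\cal T}u = -\divy\big(b\,(b \cdot \nabla _y u)\big) = -(b \cdot \nabla _y)(b \cdot \nabla _y u) = -(b \cdot \nabla _y)^2 u,
\]
the domain $\dom({\cal T})$ being exactly the set of $u \in \dom(b \cdot \nabla _y)$ with $b \cdot \nabla _y u \in \dom(b \cdot \nabla _y)$, as prescribed. Since $b \cdot \nabla _y$ is skew-adjoint, this reads ${\cal T} = (b \cdot \nabla _y)^* (b \cdot \nabla _y)$, which explains both its self-adjointness and positivity. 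Once this identity is in hand, the three claims follow from elementary properties of $b \cdot \nabla _y$ together with the Poincaré inequality \eqref{Equ23} and its consequence Proposition \ref{Inverse}.

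For the kernel, I would test ${\cal T}u = 0$ against $u$: integrating by parts via the skew-adjointness gives the energy identity $\inty{{\cal T}u\; u} = \inty{(b \cdot \nabla _y u)^2} = \|b \cdot \nabla _y u\|^2$, so ${\cal T}u = 0$ forces $b \cdot \nabla _y u = 0$, whence $\ker {\cal T} \subseteq \kerbg$; the reverse inclusion is immediate from ${\cal T} = -(b \cdot \nabla _y)^2$. Hence $\ker {\cal T} = \kerbg$.

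For the range, the inclusion $\ran {\cal T} \subseteq \ran(b \cdot \nabla _y)$ is clear since ${\cal T}u = (b \cdot \nabla _y)\big(\!-(b \cdot \nabla _y)u\big)$. For the converse I would invert $b \cdot \nabla _y$ twice. Given $v \in \ran(b \cdot \nabla _y) = \ker \ave{\cdot}$ (the range being closed under \eqref{Equ23}, cf. \eqref{Equ24}), set $z = -(b \cdot \nabla _y)^{-1}v$ and $u = (b \cdot \nabla _y)^{-1}z$, both well defined and lying in $\ker \ave{\cdot}$ by Proposition \ref{Inverse}. Then $b \cdot \nabla _y u = z \in \dom(b \cdot \nabla _y)$, so $u \in \dom({\cal T})$, and ${\cal T}u = -(b \cdot \nabla _y)z = v$. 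This gives $\ran(b \cdot \nabla _y) \subseteq \ran {\cal T}$, hence $\ran {\cal T} = \ran(b \cdot \nabla _y) = \ker \ave{\cdot}$.

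Finally, for the estimate I would apply \eqref{Equ23} twice. First $\|u - \ave{u}\| \leq C_P\,\|b \cdot \nabla _y u\|$. Since $b \cdot \nabla _y u \in \ran(b \cdot \nabla _y) = \ker \ave{\cdot}$, we have $\ave{b \cdot \nabla _y u} = 0$, so applying \eqref{Equ23} to $b \cdot \nabla _y u \in \dom(b \cdot \nabla _y)$ yields $\|b \cdot \nabla _y u\| = \|b \cdot \nabla _y u - \ave{b \cdot \nabla _y u}\| \leq C_P\,\|(b \cdot \nabla _y)^2 u\| = C_P\,\|{\cal T}u\|$; combining the two bounds gives $\|u - \ave{u}\| \leq C_P^2\,\|{\cal T}u\|$. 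I expect the only delicate point to be the bookkeeping of domains, namely checking that the twice-inverted element lies in $\dom({\cal T})$ and that $b \cdot \nabla _y u$ inherits the orthogonality $\ave{b \cdot \nabla _y u} = 0$ needed to re-apply Poincaré; both are guaranteed by Proposition \ref{Inverse} and the closedness of $\ran(b \cdot \nabla _y)$, the remaining manipulations being routine.
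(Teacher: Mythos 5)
Your proposal is correct and follows essentially the same route as the paper: the energy identity $\inty{u\,{\cal T}u} = \inty{(b\cdot\nabla_y u)^2}$ for the kernel, a double application of Proposition \ref{Inverse} for the range, and a double application of the Poincar\'e inequality \eqref{Equ23} (using $\ave{b\cdot\nabla_y u}=0$) for the estimate. The identification ${\cal T}=-(b\cdot\nabla_y)^2$ via $\divy b=0$ is exactly the structural fact the paper's argument rests on, so there is nothing to add.
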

\begin{proof}
Obviously $\kerbg{} \subset \ker {\cal T}$. Conversely, for any $u \in \ker {\cal T}$ we have $\inty{\;(\bg u )^2} = \inty{\;u {\cal T}u} = 0$ and therefore $ u \in \kerbg{}$.

Clearly $\ran {\cal T} \subset \ran ( \bg{}) = \ker \ave{\cdot}$. Consider now $w \in \ker \ave{\cdot} = \ran ( \bg{})$. By Proposition \ref{Inverse} there is $v \in \ker \ave{\cdot} \cap \dom (\bg)$ such that $\bg v = w$. Applying one more time Proposition \ref{Inverse}, there is $ u \in \ker \ave{\cdot} \cap \dom (\bg)$ such that $\bg u = v$. We deduce that $u \in \dom {\cal T}, w = {\cal T}(-u)$. Finally, for any $u \in \dom {\cal T}$ we apply twice the Poincar\'e inequality, taking into account that $\ave{\bg u } = 0$
\[
\| u - \ave{u}\| \leq C_P \|\bg u \| \leq C_P ^2 \|{\cal T} u \|.
\]
\end{proof}
\begin{remark}
\label{AveLone}
The average along the flow of $b$ can be defined in any Lebesgue space $L^q (\R^m)$, $q \in [1,+\infty]$. We refer to \cite{BosTraSin} for a complete presentation of these results. 
\end{remark}

\subsection{Average and first order differential operators}
\label{FirstOrdDiffOpe}
\noindent
We are looking for first order derivations commuting with the average operator. Recall that the commutator $[\xi \cdot \nabla _y, \eta \cdot \nabla _y]$ between two first order differential operators is still a first order differential operator, whose vector field, denoted by $[\xi, \eta]$, is given by the Poisson bracket between $\xi$ and $\eta$
\[
[\xi \cdot \nabla _y, \eta \cdot \nabla _y]:= \xi \cdot \nabla _y ( \eta \cdot \nabla _y ) - \eta \cdot \nabla _y ( \xi \cdot \nabla _y ) = [\xi, \eta] \cdot \nabla _y 
\]
where $[\xi, \eta] = (\xi \cdot \nabla _y ) \eta - ( \eta \cdot \nabla _y ) \xi$. The two vector fields $\xi$ and $\eta$ are said in involution iff their Poisson bracket vanishes. 

Assume that $c(y)$ is a smooth vector field, satisfying $c(Y(s;y)) = \partial _y Y (s;y) c(y), s\in \R, y \in \R^m$, where $Y$ is the flow of $b$ (not necessarily divergence free here). Taking the derivative with respect to $s$ at $s = 0$ yields $(b \cdot \nabla _y ) c = \partial _y b \;c(y)$, saying that $[b,c] = 0$. Actually the converse implication holds true and we obtain the following characterization for vector fields in involution, which is valid in distributions as well (see Appendix \ref{A} for proof details). 
\begin{pro}
\label{VFI} Consider $b \in W^{1,\infty}_{\mathrm{loc}} (\R^m)$ (not necessarily divergence free), with linear growth and $c \in \loloc{}$. Then $(b \cdny) c - \partial _y b \;c = 0$ in $\dpri$ iff 
\begin{equation}
\label{Equ34} c (Y(s;y)) = \partial _y Y(s;y)  c(y),\;\;s\in \R,\;\;y \in \R^m.
\end{equation}
\end{pro}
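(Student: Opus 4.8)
The plan is to prove both implications at once by showing that the pulled-back field $Z(s;y) := (\partial_y Y(s;y))^{-1} c(Y(s;y))$ does not depend on $s$; since $Z(0;y) = c(y)$, the assertion \eqref{Equ34} is exactly the statement that $Z(s;\cdot) = Z(0;\cdot)$ for every $s$. The engine of the computation is the variational equation satisfied by the Jacobian of the flow, $\partial_s (\partial_y Y) = (\dyb)(Y)\,\partial_y Y$ with $\partial_y Y(0;y)=I$, which under \eqref{Equ21}, \eqref{Equ22} has a global, locally Lipschitz solution with $\det \partial_y Y >0$. Differentiating the inverse gives $\partial_s (\partial_y Y)^{-1} = -(\partial_y Y)^{-1}(\dyb)(Y)$, and hence, for a $C^1$ field $c$, a direct computation (using $\partial_s c(Y) = ((b\cdny)c)(Y)$) yields $\partial_s Z = (\partial_y Y)^{-1}\big[(b\cdny)c - \dyb\,c\big](Y)$. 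Thus for smooth $c$ the bracket $(b\cdny)c - \dyb\, c$ vanishes iff $\partial_s Z \equiv 0$ iff \eqref{Equ34} holds; the converse for smooth $c$ is just the differentiation of \eqref{Equ34} at $s=0$ recorded before the statement.

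To reach the stated regularity $c\in\loloc$ I would regularize. Set $c_\delta = c * \rho_\delta$ with a standard mollifier. The point is that $c_\delta$ solves the bracket equation up to a controllable error: writing $(b\cdny)c_\delta - \dyb\, c_\delta = \big[(b\cdny)c - \dyb\, c\big]*\rho_\delta + r_\delta$, the remainder splits into the classical transport commutator $r_\delta^1 = (b\cdny)c_\delta - ((b\cdny)c)*\rho_\delta$, which tends to $0$ in $\loloc$ by the Friedrichs/DiPerna--Lions commutator lemma for $b\in W^{1,\infty}_{\mathrm{loc}}$, and the zeroth order piece $r_\delta^2 = \dyb\, c_\delta - (\dyb\, c)*\rho_\delta$, which also tends to $0$ in $\loloc$ because both $\dyb\,c_\delta$ and $(\dyb\, c)*\rho_\delta$ converge to the product $\dyb\, c$ (here only $\dyb\in L^\infty_{\mathrm{loc}}$ and $c\in\loloc$ are needed, with no continuity of $\dyb$). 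Under the hypothesis $(b\cdny)c - \dyb\, c = 0$ this shows $(b\cdny)c_\delta - \dyb\, c_\delta = r_\delta \to 0$ in $\loloc$.

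Applying the smooth computation of the first paragraph to $c_\delta$ then gives the integrated identity
\[
(\partial_y Y(s;y))^{-1} c_\delta(Y(s;y)) = c_\delta(y) + \int_0^s (\partial_y Y(\sigma;y))^{-1}\, r_\delta(Y(\sigma;y))\;\md\sigma,
\]
and it remains to pass to the limit $\delta \searrow 0$ in $\loloc$, for each fixed $s$. The left-hand side converges to $(\partial_y Y(s;y))^{-1} c(Y(s;y))$ and $c_\delta\to c$; for the error term I would use that, for $\sigma$ in a compact interval, the change of variables $y\mapsto Y(\sigma;y)$ is bi-Lipschitz on compacts with Jacobian bounded away from $0$ and $\infty$ uniformly in $\sigma$, so that $r_\delta(Y(\sigma;\cdot))\to 0$ in $\loloc$ and, $(\partial_y Y)^{-1}$ being locally bounded, the whole integral tends to $0$. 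This yields $c(Y(s;y)) = \partial_y Y(s;y)\, c(y)$ a.e. for every $s$, which is \eqref{Equ34}; the reverse implication follows by running the same limiting argument backwards, or equivalently by testing the constant-in-$s$ identity against a smooth function and differentiating at $s=0$ to recover the bracket equation in $\dpri$.

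The main obstacle is the second paragraph, that is the commutator estimate $r_\delta \to 0$ at the critical regularity $b\in W^{1,\infty}_{\mathrm{loc}}$, $c\in\loloc$: this is precisely the renormalization mechanism of DiPerna--Lions, and the linear growth assumption \eqref{Equ22} is what makes it legitimate globally, by guaranteeing that the flow, its Jacobian and its inverse stay in $W^{1,\infty}_{\mathrm{loc}}$ with the uniform local bounds used when transporting $r_\delta$ along the flow.
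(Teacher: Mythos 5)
Your argument is correct, but it follows a genuinely different route from the paper's. The paper never mollifies: for the direction \eqref{Equ34} $\Rightarrow [b,c]=0$ it takes the difference quotient of $\inty{c\cdot(\phi_{-h}-\phi)}$ for a test field $\phi$ and lets $h\to 0$; for the converse it sets $e(s,y)=c(Y(s;y))-\partial_y Y(s;y)c(y)$, computes $\frac{\md}{\md s}E_\phi(s)$ for $E_\phi(s)=\inty{e(s,y)\cdot\phi(y)}$ directly from the distributional equation (tested against $\phi_{-s}$), obtains the integral inequality $\|e(s,\cdot)\|_{L^\infty(B_R)}\leq K\bigl|\int_0^s\|e(\tau,\cdot)\|_{L^\infty(B_R)}\,\md\tau\bigr|$ by duality against $L^1$-normalized test functions, and concludes by Gronwall. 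You instead package both directions into the single conserved quantity $Z(s;\cdot)=(\partial_y Y)^{-1}c(Y(s;\cdot))$ and handle the low regularity of $c$ by mollification plus the Friedrichs/DiPerna--Lions commutator lemma at the exponents $\alpha=\infty$, $\beta=1$, which is legitimate here since the hypothesis forces $(b\cdny)c=\dyb\,c\in\loloc$ so that all convolutions and limits make sense. What the paper's route buys is self-containedness (only Gronwall, no external renormalization lemma) and a statement valid pointwise for all $(s,y)$ rather than a.e.; what your route buys is that every estimate is run in $\loloc$, the natural space for $c$, thereby avoiding the paper's duality step which tacitly treats $\|e(s,\cdot)\|_{L^\infty(B_R)}$ as a finite quantity, and it generalizes verbatim to $b\in W^{1,p}_{\mathrm{loc}}$, $c\in L^q_{\mathrm{loc}}$ with $1/p+1/q\leq 1$. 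The only point to make explicit if you write this up is the a.e.\ validity of the variational equation $\partial_s\partial_y Y=\dyb(Y)\,\partial_y Y$ for a merely $W^{1,\infty}_{\mathrm{loc}}$ field, which you use when differentiating $Z_\delta$; this is standard and is also used implicitly by the paper.
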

We establish also weak formulations characterizing the involution between two fields, in distribution sense (see Appendix \ref{A} for the proof). The notation $w_s$ stands for $w \circ Y(s;\cdot)$.
\begin{pro}
\label{WVFI} Consider $b \in W^{1,\infty}_{\mathrm{loc}} (\R^m)$, with linear growth and zero divergence and $c \in \loloc{}$. Then the following statements are equivalent\\
1. 
\[[b,c] = 0 \;\mbox{in}\; \dpri{}
\]
2. 
\begin{equation}
\label{Equ41}
\inty{(c \cdny u )v_{-s} } = \inty{(c\cdny u_s) v },\;\;\forall \;u, v \in C^1_c(\R^m)
\end{equation}
3.
\begin{equation}
\label{Equ42} \inty{c \cdny u \;b \cdny v } + \inty{c \cdny (b \cdny u ) v } = 0,\;\;\forall\; u \in C^2 _c (\R^m),\;\;v \in C^1 _c (\R^m).
\end{equation}
\end{pro}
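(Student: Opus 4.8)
The plan is to prove the equivalence of the three statements $1 \Rightarrow 3 \Rightarrow 2 \Rightarrow 1$ (or some convenient cycle), using the already-established Proposition \ref{VFI} as the bridge between the pointwise-flow characterization \eqref{Equ34} and the distributional identity $[b,c]=0$. The underlying principle throughout is the fact that the flow $Y(s;\cdot)$ of a divergence-free field is measure preserving, so that for any integrable $w$ one has $\inty{w_s} = \inty{w}$, and more usefully $\inty{(\phi \circ Y(s;\cdot))\,\psi} = \inty{\phi\,(\psi \circ Y(-s;\cdot))}$ by the change of variables $y \mapsto Y(-s;y)$. This reciprocity is exactly the engine that will let me slide the flow from one factor onto the other in \eqref{Equ41}.

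\medskip

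First I would show $1 \Rightarrow 2$. Assuming $[b,c]=0$ in $\dpri$, Proposition \ref{VFI} gives $c(Y(s;y)) = \partial_y Y(s;y)\,c(y)$. The quantity $c \cdny u$ evaluated along the flow transforms by the chain rule: one computes $(c\cdny u)(Y(s;y)) = c(Y(s;y)) \cdot (\nabla_y u)(Y(s;y))$, and after substituting the flow identity for $c(Y(s;y))$ and using $\partial_y (u \circ Y(s;\cdot)) = {^t\partial_y Y(s;\cdot)}\,(\nabla_y u)(Y(s;\cdot))$, the $\partial_y Y$ and its transpose combine so that $(c\cdny u)\circ Y(s;\cdot) = c \cdny (u\circ Y(s;\cdot)) = c \cdny u_s$. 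In other words, $c\cdny$ commutes with composition by the flow. Granting this, \eqref{Equ41} follows purely from measure preservation: write $\inty{(c\cdny u)\,v_{-s}}$, apply the change of variables sending $y \mapsto Y(s;y)$ so that $v_{-s} \mapsto v$ and $(c\cdny u) \mapsto (c\cdny u)_s = c\cdny u_s$, yielding the right-hand side.

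\medskip

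Next I would handle $2 \Rightarrow 3$ by differentiating \eqref{Equ41} with respect to $s$ at $s=0$. On the left, $\frac{\md}{\md s}v_{-s}\big|_{s=0} = -\,b\cdny v$, producing $-\inty{(c\cdny u)\,b\cdny v}$; on the right, $\frac{\md}{\md s}u_s\big|_{s=0} = b\cdny u$, producing $\inty{(c\cdny (b\cdny u))\,v}$. Equating gives precisely \eqref{Equ42}. Finally, for $3 \Rightarrow 1$, I would integrate by parts in \eqref{Equ42}, using $\divy b = 0$ to move $b\cdny$ off the second factor in the first integral as $-\inty{b\cdny(c\cdny u)\,v}$; comparing with the second integral then yields $\inty{\big(c\cdny(b\cdny u) - b\cdny(c\cdny u)\big)v}=0$ for all test $u,v$, which is exactly the weak statement $[c,b]\cdny u = 0$, equivalently $[b,c]=0$ in $\dpri$.

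\medskip

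The main obstacle is rigor at low regularity: Proposition \ref{VFI} only gives $c \in \loloc{}$, so the flow-transformation identity $(c\cdny u)_s = c\cdny u_s$ and the $s$-differentiation of \eqref{Equ41} cannot be justified naively by the chain rule. I expect the clean route is a regularization/approximation argument — mollify $c$ (or test against a dense family) so that the pointwise computations are legitimate, establish the identities for the smooth approximants, and pass to the limit using the $\loloc{}$ bound on $c$ together with the compact support and smoothness of $u,v$. The interchange of $\frac{\md}{\md s}$ with the spatial integral in the $2 \Rightarrow 3$ step requires a dominated-convergence estimate, which is where the linear-growth hypothesis \eqref{Equ22} on $b$ and the control of $Y$ in $W^{1,\infty}_{\mathrm{loc}}$ enter to bound the difference quotients uniformly on the (compact) support of the test functions.
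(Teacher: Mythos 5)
Your proposal is correct and follows essentially the same route as the paper: $1\Rightarrow 2$ via Proposition \ref{VFI} together with the measure-preserving change of variables $y \mapsto Y(s;y)$, $2\Rightarrow 3$ by differentiating \eqref{Equ41} at $s=0$, and $3\Rightarrow 1$ by reducing to the components of $[b,c]$ (the paper tests \eqref{Equ42} against $u = y_i\varphi$ with $\varphi = 1$ near $\supp v$, which is the same computation as your integration-by-parts reformulation). The only remark is that the mollification you anticipate for $1\Rightarrow 2$ is unnecessary: since only the smooth function $u$ is differentiated, the a.e.\ identity $c(Y(s;y)) = \partial _y Y (s;y)\, c(y)$ from Proposition \ref{VFI}, combined with $\nabla _y u_s = {^t \partial _y Y}(s;\cdot)\,(\nabla _y u)(Y(s;\cdot))$, already yields $(c \cdot \nabla _y u)\circ Y(s;\cdot) = c \cdot \nabla _y u_s$ without regularizing $c$.
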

\begin{remark}
\label{VecDiv}
If $[b,c]=0$ in $\dpri{}$, applying \eqref{Equ41} with $v = 1$ on the support of $u_s$ (and therefore $v_{-s} = 1$ on the support of $u$) yields
\[
\inty{c \cdny u} = \inty{c\cdny u_s},\;\;u \in C^1_c(\R^m)
\]
saying that $\divy c$ is constant along the flow of $b$ (in $\dpri{}$). 
\end{remark}
We claim that for vector fields $c$ in involution with $b$, the derivation $c \cdny $ commutes with the average operator. 
\begin{pro}
\label{AveComFirstOrder} Consider a vector field $c \in \loloc{}$ with bounded divergence, in involution with $b$, that is $[b,c] = 0$ in $\dpri{}$. Then the operators $u \to c \cdny u$, $u \to \divy(uc)$ commute with the average operator {\it i.e.,} for any $u \in \dom ( c\cdny )= \dom (\divy (\cdot \;c))$ we have $\ave{u} \in \dom ( c\cdny )= \dom (\divy (\cdot \;c))$ and
\[
\ave{c \cdny u} = c\cdny \ave{u},\;\;\ave{\divy(uc) } = \divy ( \ave{u}c).
\]
\end{pro}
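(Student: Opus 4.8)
The plan is to prove the two commutation identities by reducing everything to the defining property of the average operator from Proposition \ref{AverageOperator}, namely that $\ave{u} = \lim_{T \to +\infty} \frac{1}{T}\int_0^T u(Y(s;\cdot))\,\md s$ strongly in $\lty{}$. The strategy is to first establish the identity at the level of the finite-time averages $\ave{\cdot}_T$ and then pass to the limit $T \to +\infty$. The key input is Proposition \ref{WVFI}, item 2, which, since $[b,c]=0$ in $\dpri{}$, gives the ``integration by parts along the flow'' relation $\inty{(c \cdny u)\,v_{-s}} = \inty{(c \cdny u_s)\,v}$ for test functions. Rewriting this with the measure-preserving change of variables $y \mapsto Y(s;y)$ should let me transfer the operator $c \cdny$ through the composition $u \mapsto u_s = u(Y(s;\cdot))$, up to the expected appearance of $\divy c$.

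First I would treat the divergence-form operator $u \mapsto \divy(uc)$. Starting from the finite average $\ave{u}_T = \frac{1}{T}\int_0^T u_s \,\md s$, I would compute $\divy(\ave{u}_T\, c) = \frac{1}{T}\int_0^T \divy(u_s\, c)\,\md s$, interchanging the spatial derivative with the time integral. The point is then to identify $\divy(u_s\, c)$ with $(\divy(uc))_s = (\divy(uc))\circ Y(s;\cdot)$; this is exactly the content of the involution relation $[b,c]=0$ (equivalently \eqref{Equ34} and Remark \ref{VecDiv}, which controls the behavior of $\divy c$ along the flow). Granting this, $\divy(\ave{u}_T\, c) = \ave{\divy(uc)}_T$, and letting $T \to +\infty$ — using that $\divy(uc) \in \lty{}$ when $u \in \dom(\divy(\cdot\,c))$, so its finite averages converge — yields $\divy(\ave{u}\,c) = \ave{\divy(uc)}$ in $\lty{}$. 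To make this rigorous I would argue by duality against test functions $\varphi \in C^1_c(\R^m)$, so that differentiation never needs to be justified pointwise: I pair $\ave{u}_T\,c$ with $\nabla_y \varphi$, use Fubini and the strong convergence $\ave{u}_T \to \ave{u}$, and independently identify the limit of the right-hand side via \eqref{Equ41}.

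For the operator $u \mapsto c \cdny u$, since $c \cdny u = \divy(uc) - (\divy c)\,u$, I would combine the divergence-form result with the corresponding statement for multiplication by $\divy c$. Here Remark \ref{VecDiv} is essential: it tells me $\divy c$ is constant along the flow of $b$, i.e. $\divy c \in \kerbg{}$, so multiplication by $\divy c$ commutes with the average because $\ave{(\divy c)\,u} = (\divy c)\,\ave{u}$ — this last fact follows because for $g \in \kerbg{}$ one has $(gu)_s = g_s\, u_s = g\, u_s$, whence $\ave{gu}_T = g\,\ave{u}_T$ and the limit passes through. Subtracting the two identities gives $\ave{c \cdny u} = c \cdny \ave{u}$, and simultaneously shows $\ave{u} \in \dom(c\cdny)$.

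The main obstacle I anticipate is the functional-analytic bookkeeping rather than any single computation: I must verify that $\ave{u}$ genuinely lies in the domains $\dom(c \cdny) = \dom(\divy(\cdot\,c))$, which requires showing that the sequence $\divy(\ave{u}_T\, c)$ converges in $\lty{}$ (not merely weakly or distributionally) so that the limit $\divy(\ave{u}\,c)$ is a bona fide $\lty{}$ function equal to $\ave{\divy(uc)}$. The cleanest route is to show directly that $\divy(\ave{u}_T\, c) = \ave{\divy(uc)}_T$ as an exact identity for each $T$, reducing domain membership to the strong convergence already guaranteed by Proposition \ref{AverageOperator} applied to $\divy(uc)$. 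A secondary care point is that $c$ only has local regularity ($c \in \loloc{}$ with bounded divergence), so all manipulations should be phrased distributionally against compactly supported test functions and the measure-preserving property of $Y(s;\cdot)$ invoked to control the change of variables uniformly in $s$.
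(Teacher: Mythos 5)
Your proposal is correct and follows essentially the same route as the paper: both establish the exact identity $\divy(u_s\,c)=(\divy(uc))_s$ via duality with test functions and the involution relation from Proposition \ref{WVFI}, average over $s\in[0,T]$, and pass to the limit using the strong convergence of Proposition \ref{AverageOperator} applied to both $u$ and $\divy(uc)$, with Remark \ref{VecDiv} (constancy of $\divy c$ along the flow) handling the passage between $\divy(uc)$ and $c\cdot\nabla_y u$. The only cosmetic difference is that you subtract the multiplication-by-$\divy c$ identity at the level of the averages, whereas the paper does it pointwise in $s$.
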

\begin{proof}
Consider $u \in \dom (c \cdny ), s \in \R$ and $\varphi \in C^1 _c (\R^m)$. We have
\begin{align}
\label{Equ43} \inty{u_s c \cdny \varphi } & = \inty{u (c \cdny \varphi)_{-s}} \\
& = \inty{u (c \cdny ) \varphi _{-s} } \nonumber \\
& = - \inty{\divy (uc) \varphi _{-s}} \nonumber \\
& = - \inty{(\divy (uc))_s \varphi (y)} \nonumber 
\end{align}
saying that $u _s \in \dom ( c \cdny ) = \dom ( \divy ( \cdot \;c ))$ and $ \divy (u_s c ) = ( \divy (uc))_s$. We deduce $c \cdny u_s = (c \cdny u )_s$ cf. Remark \ref{VecDiv}. Integrating \eqref{Equ43} with respect to $s$ between $0$ and $T>0$ one gets
\begin{align*}
\inty{\frac{1}{T} \int _0 ^T  u_s \md s \;c \cdny \varphi } & = \frac{1}{T} \int _0 ^T  \inty{u_s c \cdny \varphi }\md s \\
& = - \frac{1}{T} \int _0 ^T \inty{(\divy (uc ))_s \varphi (y) }\md s \\
& = - \inty{\frac{1}{T}\int _0 ^T  ( \divy (uc))_s \md s \;\varphi (y) }.
\end{align*}
By Proposition \ref{AverageOperator} we know that $\frac{1}{T} \int _0 ^T u_s \md s \to \ave{u}$ and $\frac{1}{T}\int _0 ^T  (\divy (uc))_s \md s \to \ave{\divy (uc)}$ strongly in $\lty{}$, when $T \to +\infty$, and thus we obtain 
\[
\inty{\ave{u} c \cdny \varphi } = - \inty{\ave{\divy(uc)} \varphi (y) }
\]
saying that $\ave{u} \in \dom ( c \cdny )$ and $\divy (\ave{u}c) = \ave{\divy (uc)}$, $c \cdny \ave{u} = \ave{c \cdny u }$.
\end{proof}

\subsection{Average and second order differential operators}
\label{SecondOrdDiffOpe}
\noindent
We investigate the second order differential operators $- \divy (A(y) \nabla _y)$ commuting with the average operator along the flow of $b$, where $A(y)$ is a smooth field of symmetric matrix. Such second order operators leave invariant $\kerbg{}$. Indeed, for any $u \in \dom (- \divy (A(y) \nabla _y)) \cap \kerbg{}$ we have
\[
- \divy (A(y) \nabla _y u ) = - \divy (A(y) \ave{u}) = \ave{ - \divy (A(y) \nabla _y u )} \in \kerbg{}.
\]
For this reason it is worth considering the operators $- \divy (A(y) \nabla _y )$ commuting with $b \cdny$. A straightforward computation shows that
\begin{pro}
\label{ComSecOrd} Consider a divergence free vector field $b \in W^{2,\infty} (\R^m)$ and a matrix field $A \in W^{2,\infty} (\R^m)$. The commutator between $b \cdny $ and $- \divy (A(y) \nabla _y)$ is still a second order differential operator
\[
[b\cdny, - \divy(A\nabla _y )]  = - \divy ([b,A] \nabla _y )
\]
whose matrix field, denoted by $[b,A]$, is given by
\[
[b,A] = (b \cdny )A - \dyb A(y) - A(y)\; {^t \dyb},\;\;y \in \R^m.
\]
\end{pro}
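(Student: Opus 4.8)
The plan is to verify the operator identity by applying both sides to an arbitrary test function $u\in C^\infty_c(\R^m)$ and comparing the resulting functions in coordinates, working with the Einstein convention and writing $\partial_i$ for $\partial_{y_i}$. Abbreviating ${\cal L}_1=\bg$ and ${\cal L}_2=-\divy(A\nabla_y\cdot)$, I would expand the two compositions
\[
{\cal L}_1{\cal L}_2u=-b_k\,\partial_k\big[(\partial_iA_{ij})\partial_ju+A_{ij}\partial_i\partial_ju\big],\qquad
{\cal L}_2{\cal L}_1u=-\partial_i\big[A_{ij}(\partial_jb_k)\partial_ku+A_{ij}b_k\,\partial_j\partial_ku\big]
\]
by the Leibniz rule, and organize the difference according to the order of the derivative falling on $u$. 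Since neither operator has a zeroth-order part, it is enough to treat the third-, second- and first-order pieces separately and match them against the expansion $-\divy([b,A]\nabla_yu)=-[b,A]_{ij}\partial_i\partial_ju-(\partial_i[b,A]_{ij})\partial_ju$, where, with the convention $(\dyb)_{ik}=\partial_kb_i$, one has $[b,A]_{ij}=b_k\partial_kA_{ij}-(\partial_kb_i)A_{kj}-A_{ik}(\partial_kb_j)$.

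For the top order, both compositions contribute the same term $-A_{ij}b_k\,\partial_i\partial_j\partial_ku$ (after relabelling dummy indices and using symmetry of the third derivatives of $u$), so it cancels in the commutator; this is precisely the reason the commutator is again a second-order operator. For the second-order part I would collect all terms and observe that the two carrying the factor $b_k(\partial_iA_{ij})$ cancel each other once the dummies are relabelled and $\partial_i\partial_ju=\partial_j\partial_iu$ is invoked, leaving
\[
-b_k(\partial_kA_{ij})\partial_i\partial_ju+A_{ij}(\partial_jb_k)\partial_i\partial_ku+A_{ij}(\partial_ib_k)\partial_j\partial_ku,
\]
which coincides with $-[b,A]_{ij}\partial_i\partial_ju$ after reindexing, using only symmetry of the Hessian of $u$ (no symmetry of $A$ is needed at this stage).

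The first-order matching is the delicate part. The commutator produces exactly three first-order terms, $-b_k(\partial_k\partial_iA_{ij})\partial_ju+(\partial_iA_{ij})(\partial_jb_k)\partial_ku+A_{ij}(\partial_i\partial_jb_k)\partial_ku$, while the target is $-(\partial_i[b,A]_{ij})\partial_ju$. Expanding $\partial_i[b,A]_{ij}$ by Leibniz yields six terms; three of them reproduce the three commutator coefficients above directly, so the remaining three must sum to zero. And they do: one of them equals $(\partial_i\partial_kb_i)A_{kj}=(\partial_k\divy b)A_{kj}$, which vanishes precisely by the hypothesis $\divy b=0$, while the last two, $-(\partial_ib_k)(\partial_kA_{ij})$ and $+(\partial_kb_i)(\partial_iA_{kj})$, cancel one another after the swap $i\leftrightarrow k$. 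This closes the three matchings and gives the claimed formula.

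I expect the only real obstacle to be the first-order bookkeeping, and in particular keeping straight which cancellation relies on which fact: the third-order cancellation uses only symmetry of mixed partials, the second order uses symmetry of the Hessian, and the first order crucially uses $\divy b=0$ — any field with nonconstant divergence would leave the spurious term $(\partial_k\divy b)A_{kj}\,\partial_ju$. I would close with a short regularity remark: since $b,A\in W^{2,\infty}(\R^m)$, every coefficient appearing in the computation involves at most two derivatives of $b$ or $A$ and hence lies in $L^\infty_{\mathrm{loc}}$, so each product against the smooth compactly supported $u$ belongs to $\loloc{}$; the pointwise identity therefore holds almost everywhere and lifts to an identity of operators in $\dpri$.
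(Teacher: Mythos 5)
Your computation is correct, and it is precisely the ``straightforward computation'' that the paper invokes without writing out: the paper states Proposition \ref{ComSecOrd} with no proof, so your coordinate expansion supplies exactly the intended argument. You correctly identify where each ingredient enters --- symmetry of mixed partials for the third- and second-order cancellations, and $\divy b=0$ to kill the term $(\partial_k \divy b)A_{kj}\partial_j u$ in the first-order matching --- and the closing regularity remark for $b,A\in W^{2,\infty}$ is adequate.
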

\begin{remark} 
We have the formula ${^t [b,A]} = [b, {^t A}]$. In particular if $A(y)$ is a field of symmetric (resp. anti-symmetric) matrix, the field $[b,A]$ has also symmetric (resp. anti-symmetric) matrix.
\end{remark}
As for vector fields in involution, we have the following characterization (see Appendix \ref{A} for proof details).
\begin{pro}
\label{MFI} Consider $b \in W^{1,\infty}_{\mathrm{loc}} (\R^m)$ (not necessarily divergence free) with linear growth and $A(y) \in \loloc{}$. Then $[b,A] = 0$ in $\dpri{}$ iff
\begin{equation}
\label{Equ35} A(\ysy) = \dyy A(y) \;{^t \dyy},\;\;s\in \R,\;\;y \in \R^m.
\end{equation}
\end{pro}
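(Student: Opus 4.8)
The statement is the matrix (contravariant $2$-tensor) analogue of Proposition \ref{VFI}, and I would run exactly the same scheme: the transformation rule $A(\ysy) = \dyy\, A(y)\, {^t \dyy}$ is precisely the tensorial pullback of $A$ along the flow of $b$. Write $J(s;y) := \dyy$ for the Jacobian of the flow. Differentiating the flow equation \eqref{EquFlow} in $y$ gives the variational equation $\partial _s J = \dyb (\ysy)\, J$ with $J(0;y) = I$, whence $\partial _s J^{-1} = - J^{-1}\dyb(\ysy)$ and $\partial _s({^t J^{-1}}) = - {^t \dyb(\ysy)}\, {^t J^{-1}}$. Note $J$ is invertible, since $\det J(s;y) = \exp \int _0 ^s \divy b (\ysy )\,\md \tau > 0$, so these formulas make sense.

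The core of the argument is the smooth case. I would introduce the pulled-back field $F(s;y) := J^{-1}(s;y)\, A(\ysy)\, {^t J^{-1}}(s;y)$, so that the identity \eqref{Equ35} reads exactly $F(s;\cdot) \equiv A = F(0;\cdot)$. Using $\partial _s [A(\ysy)] = (\bg A)(\ysy)$ together with the three derivative formulas above, the three contributions recombine so that the inner bracket reconstructs the matrix commutator of Proposition \ref{ComSecOrd}, giving
\[
\partial _s F(s;y) = J^{-1}(s;y)\,[b,A](\ysy)\, {^t J^{-1}}(s;y).
\]
Since $J$ is invertible, $\partial _s F \equiv 0$ is equivalent to $[b,A](\ysy) = 0$ for all $s$, hence (taking $s=0$) to $[b,A] = 0$. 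This settles both implications simultaneously in the smooth setting: if $[b,A] = 0$ everywhere then $F$ is constant in $s$, which is \eqref{Equ35}; conversely, if \eqref{Equ35} holds then $F$ is constant, so $\partial _s F \equiv 0$ and therefore $[b,A] = 0$.

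The only genuine work is upgrading this to $b \in W^{1,\infty}_{\mathrm{loc}}(\R^m)$ and $A \in \loloc$, with $[b,A]$ read in $\dpri$. I would regularise by convolution, $A_\delta := A * \rho_\delta$, and invoke a DiPerna--Lions commutator lemma: the transport term satisfies $\bg A_\delta - (\bg A)_\delta \to 0$ in $\loloc$ as $\delta \searrow 0$ (using $b \in W^{1,\infty}_{\mathrm{loc}}$), while the algebraic terms $\dyb\, A_\delta + A_\delta\, {^t \dyb}$ converge to $\dyb\, A + A\, {^t \dyb}$ in $\loloc$ because $\dyb \in L^\infty_{\mathrm{loc}}(\R^m)$ and $A_\delta \to A$ in $\loloc$; hence $[b,A_\delta] \to [b,A]$ in $\dpri$. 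Applying the smooth computation to $A_\delta$, integrating the displayed identity in $s$ against a test function, and letting $\delta \searrow 0$ transfers the equivalence to the distributional level. I expect this regularisation step — justifying the differentiation in $s$ weakly and controlling the commutator uniformly on compact sets under the mere linear-growth and local-Lipschitz hypotheses — to be the main obstacle; everything else is the tensorial bookkeeping carried out above and parallels the proof of Proposition \ref{VFI} in Appendix \ref{A}.
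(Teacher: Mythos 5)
Your smooth-case computation is correct, and it is essentially the differential form of both halves of the paper's argument: the identity $\partial_s F(s;y)=J^{-1}(s;y)\,[b,A](\ysy)\,{^tJ^{-1}}(s;y)$ for the pulled-back field is what the paper obtains in integrated, tested form --- once as a difference quotient falling on a test matrix field $U\in C^1_c(\R^m)$ (for the implication \eqref{Equ35} $\Rightarrow$ $[b,A]=0$), and once as the $s$-derivative of the defect $f(s,y)=A(\ysy)-\dyy A(y)\,{^t\dyy}$ tested against $U$ (for the converse). Where you genuinely diverge is the passage to $A\in\loloc$: the paper never regularises. For the hard implication $[b,A]=0\Rightarrow\eqref{Equ35}$ it derives $F_U(s)=\int_0^s\inty{f(\tau,y):\{{^t\dyb}(Y(\tau;y))U(y)+U(y)\dyb(Y(\tau;y))\}}\md\tau$ and closes with Gronwall applied to $\|f(s,\cdot)\|_{L^\infty(B_R)}=\sup\{|F_U(s)|:\|U\|_{\loy}\le1\}$. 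Your DiPerna--Lions route for that same implication does work: $[b,A]=0$ forces $\bg A=\dyb\,A+A\,{^t\dyb}\in\loloc$, the commutator lemma (legitimate here, $b\in W^{1,\infty}_{\mathrm{loc}}$ against $A\in\loloc$) gives $[b,A_\delta]\to0$ in $\loloc$, and integrating your identity in $s$ and letting $\delta\searrow0$ (the flow being locally bi-Lipschitz with $J^{\pm1}$ locally bounded) yields $F(s;\cdot)=A$. This buys a proof with no Gronwall step, and it sidesteps the point that $\|f(s,\cdot)\|_{L^\infty(B_R)}$ must be known finite before Gronwall can be invoked.

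The soft spot is the reverse implication $\eqref{Equ35}\Rightarrow[b,A]=0$, which you also propose to push through the mollification. There the integrated identity requires testing $[b,A_\delta]$ against ${^tJ^{-1}}(\tau;Y(-\tau;\cdot))\,U(Y(-\tau;\cdot))\,J^{-1}(\tau;Y(-\tau;\cdot))$ (times a Jacobian determinant), which is merely bounded with compact support, not smooth; since in this direction you do not yet know $\bg A\in\loloc$, the term $(\bg A)*\rho_\delta$ has no limit against such a test object, and the two limits $\delta\searrow0$ and $s\to0$ cannot be exchanged as stated. This direction is the easy one and should be done directly, as the paper does: substitute \eqref{Equ35} into $\inty{A(y):(U(Y(-h;y))-U(y))}=\inty{(A(Y(h;y))-A(y)):U(y)}$, divide by $h$, and use that $h^{-1}(\partial_y Y(h;y)-I)\to\dyb$ boundedly a.e.\ on compacts; this lands on $[b,A]=0$ in $\dpri$ with no regularisation at all. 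With that one direction rerouted, your proof is complete.
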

For fields of symmetric matrix we have the weak characterization (see Appendix \ref{A} for the proof).
\begin{pro}
\label{WMFI} Consider $b \in W^{1,\infty}_{\mathrm{loc}} (\R^m)$ with linear growth, zero divergence and $A \in \loloc{}$ a field of symmetric matrix. Then the following statements are equivalent\\
1. 
\[
[b,A] = 0 \;\mbox{ in } \; \dpri{}.
\]
2. 
\[
\inty{A(y) \nabla _y u_s \cdot \nabla _y v_s } = \inty{A(y) \nabla _y u \cdot \nabla _y v }
\]
for any $s \in \R$, $u, v \in C^1 _c ( \R^m)$.\\
3. 
\[
\inty{A(y) \nabla _y ( b \cdny u ) \cdot \nabla _y v } + \inty{A(y) \nabla _y u \cdot \nabla _y ( b \cdny v ) } = 0
\]
for any $u, v \in C^2 _c (\R^m)$.
\end{pro}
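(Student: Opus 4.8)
The plan is to prove the cyclic chain $1\Rightarrow 2\Rightarrow 3\Rightarrow 1$, using three ingredients repeatedly. First, by Proposition \ref{MFI} statement 1 is equivalent to the pointwise identity \eqref{Equ35}, namely $A(\ysy) = \dyy A(y)\,{^t\dyy}$. Second, the chain rule along the flow gives $\nabla _y u_s = {^t\dyy}\,(\nabla _y u)_s$, and differentiating $u_s = u\circ Y(s;\cdot)$ in $s$ gives $\frac{\md}{\md s}u_s = (\bg u)_s$. Third, since $\divy b = 0$ each map $Y(s;\cdot)$ preserves the Lebesgue measure, so $\inty{w_s} = \inty{w}$ for every $w\in\loloc{}$ with compact support.

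For $1\Rightarrow 2$ I would insert the chain rule into the integrand and move both Jacobians onto $A$ via $Mp\cdot q = p\cdot{^tM}q$, obtaining $A(y)\nabla _y u_s\cdot\nabla _y v_s = (\dyy A(y)\,{^t\dyy})(\nabla _y u)_s\cdot(\nabla _y v)_s$. By \eqref{Equ35} the matrix in parentheses equals $A(\ysy)$, so the integrand is exactly $(A\nabla _y u\cdot\nabla _y v)_s$; integrating in $y$ and invoking measure preservation removes the subscript $s$ and yields statement 2. For $2\Rightarrow 3$ I restrict to $u,v\in C^2_c(\R^m)$: statement 2 says that $f(s):=\inty{A\nabla _y u_s\cdot\nabla _y v_s}$ is independent of $s$, so $f'(0)=0$, and computing $f'(0)$ with $\frac{\md}{\md s}\big|_{0}\nabla _y u_s = \nabla _y(\bg u)$ reproduces precisely the two terms of statement 3. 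The interchange of $\frac{\md}{\md s}$ with $\nabla _y$ and the passage to the limit under the integral are justified through $\frac{u_s-u}{s} = \frac1s\int_0^s(\bg u)_\sigma\,\md\sigma$ and dominated convergence, $A$ being only $\loloc{}$ but the gradients being uniformly bounded with fixed compact support.

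The delicate implication, and the \textbf{main obstacle}, is $3\Rightarrow 1$: one must recover the global statement from an infinitesimal one at the low regularity $A\in\loloc{}$, $b\in W^{1,\infty}_{\mathrm{loc}}(\R^m)$, so that Proposition \ref{ComSecOrd} (which requires $W^{2,\infty}$ fields) cannot be quoted directly. The plan is to verify, by distributional integration by parts legitimate at this regularity, that the left-hand side of statement 3 equals $-\langle[b,A],\nabla _y u\otimes\nabla _y v\rangle$; concretely, one writes $\langle(\bg)A,N\rangle = -\inty{A:(\bg)N}$, which is valid because $\divy b=0$, pairs the algebraic terms $\dyb A + A\,{^t\dyb}$ with $N$, expands $N=\nabla _y u\otimes\nabla _y v$ with the product rule, and checks the term-by-term cancellation against $A\nabla _y(\bg u)\cdot\nabla _y v + A\nabla _y u\cdot\nabla _y(\bg v)$. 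This identifies statement 3 as the weak form of $[b,A]=0$ tested against gradient tensors. It then remains to localize: choosing $u,v$ that depend on single (possibly equal) coordinates concentrates $\nabla _y u\otimes\nabla _y v$ on one entry, and density of tensor products together with Lebesgue's differentiation theorem forces each entry of the symmetric object $[b,A]$ to vanish, i.e. $[b,A]=0$ in $\dpri$. The real difficulty lies in this low-regularity bookkeeping — establishing the distributional identity without the smoothness used in Proposition \ref{ComSecOrd}, and the localization step converting an identity tested only against gradient tensors into pointwise vanishing of the matrix field.
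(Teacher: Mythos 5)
Your implications $1\Rightarrow 2$ and $2\Rightarrow 3$ are exactly the paper's: change of variables along the measure-preserving flow together with the chain rule $\nabla _y u_s = {^t \partial _y Y}(s;\cdot)\,(\nabla _y u)_s$, then differentiation at $s=0$. The difference lies in how the loop is closed. The paper proves $3\Rightarrow 2$ by applying statement 3 to the translated functions $u_s, v_s$ (using $(\bg u)_s = \bg u_s$) to show that $s\mapsto\inty{A\nabla _y u_s\cdot\nabla _y v_s}$ has zero derivative, and then $2\Rightarrow 1$ by a change of variables leading to $\inty{(A - (\partial _y Y A\,{^t\partial _y Y})_{-s})\nabla _y u\cdot\nabla _y v} = 0$, from which Lemma \ref{Divergence} gives the pointwise identity \eqref{Equ35} and Proposition \ref{MFI} concludes. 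You instead go from 3 to 1 directly, by identifying the left-hand side of statement 3 with $-\langle[b,A],\nabla _y u\otimes\nabla _y v\rangle$ (a low-regularity weak form of Proposition \ref{ComSecOrd}) and then localizing. Your route never returns to the flow, which is economical, but it pays two prices: the distributional commutator identity must be justified with only $A\in\loloc{}$ and $b\in W^{1,\infty}_{\mathrm{loc}}$ (this does work: the pairing $\langle(\bg)A, N\rangle = -\inty{A:(\bg)N}$ is legitimate for $N\in C^1_c$ because $\divy b = 0$, and the algebraic terms cancel as you indicate since $A$ is symmetric), and the localization must be performed on the order-one distribution $[b,A]$ rather than on an integrable field. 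The paper's detour through statement 2 arranges matters so that Lemma \ref{Divergence} is applied to an $\loloc{}$ matrix field, where the conclusion is immediate almost everywhere.

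One caution on your localization step, which you rightly flag as the main obstacle: functions depending on a single coordinate are not compactly supported, and density of the tensors $\nabla _y u\otimes\nabla _y v$ among all test matrices is false (gradients are constrained, which is precisely why the symmetry of $A$, hence of $[b,A]$, is indispensable). The working mechanism is the one in Lemma \ref{Divergence}: test against $u = y_i\varphi$, $v = y_j v$ and $u = y_iy_j\varphi$ with $\varphi\equiv 1$ near $\supp v$, and combine the three resulting identities using ${^t[b,A]} = [b,A]$. That computation transposes verbatim to a symmetric matrix of first-order distributions, so your plan does close; but Lebesgue differentiation is not the relevant tool, since the conclusion $[b,A]_{ij} = 0$ is obtained directly in $\dpri{}$.
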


We consider the (formal) adjoint of the linear operator $A \to [b,A]$, with respect to the scalar product $(U,V) = \inty{U(y) : V(y)}$, given by
\[
Q \to - (b \cdny ) Q - {^t \dyb} Q(y) - Q(y) \dyb 
\] 
when $\divy b = 0$. The following characterization comes easily and the proof is left to the reader.
\begin{pro}
\label{AdyMatFieInv} Consider $b \in W^{1,\infty}_{\mathrm{loc}} (\R^m)$, with linear growth and $Q \in L^1 _{\mathrm{loc}} (\R^m)$. Then $- (b \cdny ) Q - {^t \dyb} Q(y) - Q(y) \dyb = 0$ in $\dpri {}$ iff
\begin{equation}
\label{Equ36} Q(\ysy) = {^t \partial _y Y }^{-1}(s;y) Q(y) \partial _y Y   ^{-1}(s;y),\;\;s\in \R,\;\;y \in \R^m.
\end{equation}
\end{pro}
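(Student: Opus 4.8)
The plan is to mirror the argument used for Proposition \ref{MFI} (and, at the level of vector fields, Proposition \ref{VFI}), the only change being that conjugation by the Jacobian $\dyy$ is replaced by conjugation by its inverse transpose. Write $\Lambda Q := -(\bg)Q - {}^t\dyb\, Q - Q\,\dyb$ for the operator appearing in the statement, so the claim reads: $\Lambda Q = 0$ in $\dpri$ iff \eqref{Equ36}. The central object is the conjugated field
\begin{equation*}
N(s;y) := {}^t\dyy\; Q(\ysy)\;\dyy,\qquad s\in\R,\; y\in\R^m,
\end{equation*}
and I would show that $\Lambda Q = 0$ is equivalent to $N(\cdot\,;y)$ being independent of $s$. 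Since $\dyy = I$ at $s=0$, independence in $s$ reads ${}^t\dyy\, Q(\ysy)\,\dyy = Q(y)$, and inverting the two Jacobians turns this into exactly \eqref{Equ36}.

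In the smooth category I would establish the equivalence by a direct flow computation. Differentiating the flow \eqref{EquFlow} in $y$ gives the variational equation $\frac{\md}{\md s}\dyy = \dyb(\ysy)\,\dyy$, hence $\frac{\md}{\md s}{}^t\dyy = {}^t\dyy\,{}^t\dyb(\ysy)$, while the chain rule gives $\frac{\md}{\md s}Q(\ysy) = ((\bg)Q)(\ysy)$. Collecting the three terms and factoring ${}^t\dyy$ on the left and $\dyy$ on the right yields
\begin{equation*}
\frac{\md}{\md s}N(s;y) = {}^t\dyy\,\big[(\bg)Q + {}^t\dyb\,Q + Q\,\dyb\big](\ysy)\,\dyy = -\,{}^t\dyy\,(\Lambda Q)(\ysy)\,\dyy.
\end{equation*}
As $\dyy$ is invertible, $\frac{\md}{\md s}N \equiv 0$ iff $(\Lambda Q)(\ysy) = 0$ for every $s$; evaluating at $s=0$ shows this is the same as $\Lambda Q = 0$ on all of $\R^m$, the flow covering the whole space. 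As a consistency check, if $P$ is an invertible solution of $[b,P]=0$, then inverting the relation \eqref{Equ35} furnished by Proposition \ref{MFI} shows that $Q = P^{-1}$ solves $\Lambda Q = 0$ and satisfies \eqref{Equ36}; this is the origin of hypothesis \eqref{Equ56}, although the computation above requires no invertibility.

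The remaining point, and the only genuine obstacle, is to justify the chain-rule step $\frac{\md}{\md s}Q(\ysy) = ((\bg)Q)(\ysy)$ when $Q$ is merely $\loloc$, so that $(\bg)Q$ is a priori only a distribution. I would remove this difficulty exactly as in the Appendix proofs of Propositions \ref{VFI} and \ref{MFI}: regularize by convolution $Q_\delta = Q*\rho_\delta$, for which the identities above hold pointwise, and pass to the limit $\delta\searrow 0$. The passage is legitimate because $Y$ is measure preserving and $Y, \dyy \in W^{1,\infty}_{\mathrm{loc}}(\R\times\R^m)$ with linear growth, so that for fixed $s$ the conjugation map $Q \mapsto {}^t\dyy\,Q(\ysy)\,\dyy$ is continuous on $\loloc$. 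Equivalently, I would test against a smooth compactly supported matrix field $M$ and rewrite the claim as the constancy in $s$ of $s\mapsto \inty{Q(\ysy) : \dyy\,M(y)\,{}^t\dyy}$, which is the weak form of $\frac{\md}{\md s}N = 0$. The only estimate needed is the control of the commutator of $\rho_\delta *$ with the Lipschitz coefficients $\dyb$ and with composition by $Y$, precisely the commutator lemma already used for Propositions \ref{VFI} and \ref{MFI}; since no new ingredient enters, the characterization indeed comes at once from its matrix-field counterpart.
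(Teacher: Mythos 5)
The paper does not actually write out a proof of Proposition \ref{AdyMatFieInv}: it states that the characterization ``comes easily and the proof is left to the reader'', the intended model being the Appendix proofs of Propositions \ref{VFI} and \ref{MFI}. Your core computation is exactly that intended argument in mirror image: setting $N(s;y)={^t \dyy}\,Q(\ysy)\,\dyy$, the variational equation $\frac{\md}{\md s}\dyy=\dyb(\ysy)\dyy$ gives $\frac{\md}{\md s}N=-{^t\dyy}\,(\Lambda Q)(\ysy)\,\dyy$, and constancy of $N$ is equivalent to \eqref{Equ36}; your consistency check against Remark \ref{InverseQ} and hypothesis \eqref{Equ56} is also on target. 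So the approach is correct and is essentially the one the paper has in mind.

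One caveat on the low-regularity step, where your description drifts from what the Appendix actually does. The proofs of Propositions \ref{VFI} and \ref{MFI} do not regularize by convolution and there is no commutator lemma in the paper; for the implication ``equation $\Rightarrow$ flow identity'' they introduce the error field (here it would be $g(s,y)=Q(\ysy)-{^t\partial_y Y^{-1}}Q\,\partial_y Y^{-1}$, or equivalently $N(s;\cdot)-Q$), test it against $U\in C^1_c$, use the distributional equation against the \emph{transported} test field $U_{-s}$ to kill the term containing $\nabla_y U$, and obtain an integral inequality for $\sup\{|F_U(s)|:\|U\|_{\loy{}}\le 1\}$ closed by Gronwall, starting from $g(0,\cdot)=0$. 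Your alternative of proving directly that $s\mapsto\inty{N(s;y):M(y)}$ is constant is not immediate from the hypothesis: after changing variables the $s$-derivative equals $-\inty{Q:[b,\tilde M(s;\cdot)]}$ with $\tilde M(s;\cdot)$ a conjugated, flow-transported field that is not an admissible $C^1_c$ test field (its admissibility is exactly what a mollification/commutator estimate, absent from the paper, would have to supply), or equivalently it equals a zero-order functional of the error itself rather than something manifestly zero. Either import such a commutator lemma explicitly, or (cleaner, and faithful to the Appendix) run the Gronwall argument on the error; with that adjustment the proof is complete.
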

\begin{remark}
\label{InverseQ}
If $Q(y)$ satisfies \eqref{Equ36} and is invertible for any $y \in \R^m$ with $Q^{-1} \in L^1 _{\mathrm{loc}}(\R^m)$, then $Q^{-1} (\ysy) = \dyy Q^{-1} (y) {^t \dyy}$, $s \in \R, y \in \R^m$ and therefore $[b,Q^{-1}] = 0$ in $\dpri{}$. If $P(y)$ satisfies \eqref{Equ35} and is invertible for any $y \in \R^m$, then 
\[
P^{-1} (\ysy) = {^t \partial _y Y} ^{-1} (s;y)  P ^{-1} (y)  \partial _y Y ^{-1} (s;y),\;\;s \in \R,\;\; y \in \R^m
\]
and therefore $- (b \cdny ) P - {^t \dyb} P(y) - P(y) \dyb = 0$ in $\dpri{}$. 
\end{remark}
As for vector fields in involution, the matrix fields in involution with $b$ generate second order differential operators commuting with the average operator.
\begin{pro}
\label{AveComSecondOrder} Consider a matrix field $A \in \loloc{}$ such that $\divy A \in \loloc{}$ and $[b,A] = 0$ in $\dpri{}$. Therefore the operator $u \to - \divy (A \nabla _y u )$ commutes with the average operator {\it i.e.,} for any $u \in \dom ( - \divy (A \nabla _y ))$ we have $\ave{u} \in \dom ( - \divy (A \nabla _y ))$ and 
\[
-\ave{\divy (A \nabla _y u )} = - \divy (A \nabla _y \ave{u}).
\]
\end{pro}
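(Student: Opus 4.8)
The plan is to mimic the proof of Proposition~\ref{AveComFirstOrder}, replacing the involution of first order fields by the involution of matrix fields expressed in \eqref{Equ35}. The argument splits into two steps: first show that the second order operator $-\divy(A\nabla _y)$ commutes with the composition by the flow, that is $-\divy(A\nabla _y u_s) = (-\divy(A\nabla _y u))_s$ for every $u$ in the domain; then average this identity over $s$ and invoke Proposition~\ref{AverageOperator}. As in the rest of this subsection I take $A$ symmetric, so that $-\divy(A\nabla _y)$ is formally self-adjoint and $u\in\dom(-\divy(A\nabla _y))$ means $\inty{u\,(-\divy(A\nabla _y\varphi))} = \inty{(-\divy(A\nabla _y u))\,\varphi}$ for all $\varphi\in C^2_c(\R^m)$.

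The key preliminary fact is a transport identity at the level of test functions: for every $\varphi\in C^2_c(\R^m)$ and $s\in\R$,
\[
(-\divy(A\nabla _y\varphi))_{-s} = -\divy(A\nabla _y\varphi_{-s}).
\]
This is the second order analogue of the identity $c\cdny\varphi_{-s} = (c\cdny\varphi)_{-s}$ used in Proposition~\ref{AveComFirstOrder}; it encodes precisely $[b,A]=0$. I would obtain it from the pointwise characterization \eqref{Equ35} of Proposition~\ref{MFI}, $A(\ysy) = \dyy A(y)\,{}^t\dyy$, combined with the chain rule $\nabla _y\varphi_{-s} = {}^t\partial _y Y(-s;\cdot)\,(\nabla _y\varphi)_{-s}$ and the fact that $\divy b = 0$ makes $Y(s;\cdot)$ measure preserving, so that the Jacobian factors cancel; equivalently, it is statement $2$ of Proposition~\ref{WMFI} read through the bilinear form $(u,v)\mapsto\inty{A\nabla _y u\cdot\nabla _y v}$.

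Granting this, fix $u\in\dom(-\divy(A\nabla _y))$ and $\varphi\in C^2_c(\R^m)$. Using that the measure preserving change of variable $y\mapsto Y(-s;y)$ turns $\inty{w_s\,g}$ into $\inty{w\,g_{-s}}$, I would compute
\[
\inty{u_s\,(-\divy(A\nabla _y\varphi))} = \inty{u\,(-\divy(A\nabla _y\varphi))_{-s}} = \inty{u\,(-\divy(A\nabla _y\varphi_{-s}))}
\]
\[
= \inty{(-\divy(A\nabla _y u))\,\varphi_{-s}} = \inty{(-\divy(A\nabla _y u))_s\,\varphi},
\]
where the second equality is the transport identity above, the third is the definition of $u\in\dom(-\divy(A\nabla _y))$ tested against $\varphi_{-s}$, and the last is again the change of variable. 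Since $\varphi$ is arbitrary this shows $u_s\in\dom(-\divy(A\nabla _y))$ together with $-\divy(A\nabla _y u_s) = (-\divy(A\nabla _y u))_s$. Averaging this last identity against a fixed $\varphi\in C^2_c(\R^m)$ over $s\in(0,T)$ gives $\inty{\ave{u}_T\,(-\divy(A\nabla _y\varphi))} = \inty{\ave{-\divy(A\nabla _y u)}_T\,\varphi}$; letting $T\to+\infty$ and using the strong $\lty{}$ convergences $\ave{u}_T\to\ave{u}$ and $\ave{-\divy(A\nabla _y u)}_T\to\ave{-\divy(A\nabla _y u)}$ of Proposition~\ref{AverageOperator} yields $\inty{\ave{u}\,(-\divy(A\nabla _y\varphi))} = \inty{\ave{-\divy(A\nabla _y u)}\,\varphi}$, which is exactly $\ave{u}\in\dom(-\divy(A\nabla _y))$ and $-\ave{\divy(A\nabla _y u)} = -\divy(A\nabla _y\ave{u})$.

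The main obstacle is the rigorous justification of the transport identity of the second paragraph under the stated low regularity: with $b\in W^{1,\infty}_{\mathrm{loc}}(\R^m)$ the flow is only Lipschitz, so $\varphi_{-s}$ is not $C^2$, \eqref{Equ35} holds only almost everywhere, and $A\in\loloc{}$ has to be integrated by parts against merely Lipschitz gradients. I expect this to be handled exactly as in the first order case, by passing through the weak formulation (statement $2$ or $3$ of Proposition~\ref{WMFI}) rather than the pointwise chain rule, or by a mollification/density argument on $b$ and $A$; once the transport identity is secured, the remaining computation and the averaging step are routine.
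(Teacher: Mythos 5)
Your proposal follows essentially the same route as the paper: the transport identity $(\divy(A\nabla_y\varphi))_{-s}=\divy(A\nabla_y\varphi_{-s})$ is obtained from the implication $1\Rightarrow 2$ of Proposition \ref{WMFI} (which, as the paper notes, does not require symmetry of the matrix field), and then the duality computation and the Ces\`aro averaging via Proposition \ref{AverageOperator} are identical, so the regularity worries in your last paragraph are already resolved by the weak-formulation route you yourself identify. The only deviation is your standing symmetry assumption on $A$: the proposition is stated for a general matrix field with $[b,A]=0$, and the paper handles this by pairing $u_s$ against $\divy({^t A}\nabla_y\varphi)$ and using $[b,{^t A}]={^t[b,A]}=0$, a one-line adjustment to your argument.
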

\begin{proof}
Consider $u \in \dom( - \divy (A\nabla _y )) = \{w \in \lty{}: -\divy (A\nabla _y w ) \in \lty{}\}$. For any $s \in \R, \varphi \in C^2 _c (\R^m)$ we have
\begin{equation}
\label{Equ51} - \inty{u_s \;\divy ( \;{^t A} \nabla _y \varphi ) } = - \inty{u \;( \divy ( \;{^t A} \nabla _\varphi ))_{-s}}.
\end{equation}
By the implication $1.\implies 2.$ of Proposition \ref{WMFI} (which does not require the symmetry of $A(y)$) we know that
\[
\inty{{^t A } \nabla _y \varphi \cdot \nabla _y \psi _s } = \inty{{^t A} \nabla _y \varphi _{-s} \cdot \nabla _y \psi }
\]
for any $\psi \in C^2 _c (\R^m )$. We deduce that 
\[
- \inty{\divy ( {^t A} \nabla _y \varphi ) \psi _s } = - \inty{\divy ( {^t A } \nabla _y \varphi _{-s} ) \psi }
\]
and thus $(\divy ( {^t A} \nabla _y \varphi ))_{-s} = \divy ( {^t A} \nabla _y \varphi _{-s})$. Combining with $\eqref{Equ51}$ yields
\begin{eqnarray}
\label{Equ52} - \inty{\;u_s \divy ( {^t A} \nabla _y \varphi )} & = - \inty{\;u\; \divy ( {^t A} \nabla _y \varphi _{-s})} \\
& = - \inty{\;\divy (A \nabla _y u ) \varphi _{-s}} \nonumber \\
& = - \inty{\;(\divy (A \nabla _y u ))_s \varphi (y)} \nonumber 
\end{eqnarray}
saying that $u_s \in \dom ( - \divy (A \nabla _y ))$ and
\[
- \divy ( A \nabla _y u_s)  = ( - \divy (A \nabla _y u ))_s.
\]
Integrating \eqref{Equ52} with respect to $s$ between $0$ and $T$ we obtain
\[
\inty{\frac{1}{T} \int _0 ^T u_s \;\md s\; \divy ( {^t A } \nabla _y \varphi )} = \inty{\frac{1}{T} \int _0 ^T (\divy (A \nabla _y u))_s \;\md s \;\varphi (y)}.
\]
Letting $T \to +\infty$ yields
\[
\inty{\ave{u} \divy ( {^t A }\nabla _y \varphi ) } = \inty{\ave{\divy (A \nabla _y u )} \varphi (y)}
\]
and therefore $\ave{u} \in \dom ( \divy (A \nabla _y ))$, $\divy (A \nabla _y \ave{u}) = \ave{\divy (A \nabla _y u )}$. 
\end{proof}

\subsection{The averaged diffusion matrix field}
\label{AveDifMatFie}
\noindent
We are looking for the limit, when $\eps \to 0$, of \eqref{Equ1}, \eqref{Equ2}. We expect that the limit $u = \lime \ue $ satisfies \eqref{Equ7}, \eqref{Equ8}. By \eqref{Equ7} we deduce that at any time $t \in \R_+$, $u(t,\cdot) \in \kerbg{}$. Observe also that $\divy(b \otimes b \nabla _y u^1) = b \cdny (b \cdny u^1) \in \ran ( b \cdny ) \subset \ker \ave{\cdot}$ and therefore the closure for $u$ comes by applying the average operator to \eqref{Equ8} and by noticing that $\ave{\partial _t u } = \partial _t \ave{u} = \partial _t u $
\begin{equation}
\label{Equ54} \partial _t u - \ave{\divy ( D \nabla _y u )} = 0,\;\;t\in \R_+,\;\;y \in \R^m.
\end{equation}
At least when $[b,D] = 0$, we know by Proposition \ref{AveComSecondOrder} that 
\[
\ave{\divy (D \nabla _y u)} = \divy (D \nabla _y \ave{u}) = \divy (D \nabla _y u)
\]
and \eqref{Equ54} reduces to the diffusion equation associated to the matrix field $D(y)$. Nevertheless, even if $[b,D] \neq 0$, \eqref{Equ54} behaves like a diffusion equation. More exactly the $\lty{}$ norm of the solution decreases with a rate proportional to the $\lty{}$ norm of its gradient under the hypothesis \eqref{Equ3}
\begin{align*}
\frac{1}{2}\frac{\md }{\md t} \inty{(u(t,y))^2} & = \inty{\ave{\divy ( D \nabla _y u )} u (t,y) } \\
& = \inty{\divy ( D \nabla _y u ) u }\\
& = - \inty{D \nabla _y u \cdot \nabla _y u }\\
& = - \inty{(D + b\otimes b ) : \nabla _y u \otimes \nabla _y u } \\
& \leq - d \inty{|\nabla _y u (t,y) |^2 }.
\end{align*}
We expect that, under appropriate hypotheses, \eqref{Equ54} coincides with a diffusion equation, corresponding to some {\it averaged} matrix field ${\cal D}$, that is
\begin{equation}
\label{Equ55}
\exists \; {\cal D} (y)\;:\; [b, {\cal D}] = 0\;\mbox{ and } \; \ave{- \divy ( D \nabla _y u )} = - \divy ( {\cal D} \nabla _y u ),\;\;\forall \;u \in \kerbg{}.
\end{equation}
It is easily seen that in this case the limit model \eqref{Equ54} reduces to
\[
\partial _t u - \divy ( {\cal D}\nabla _y u ) = 0,\;\;t \in \R_+,\;\;y \in \R^m.
\]
In this section we identify sufficient conditions which guarantee the existence of the matrix field ${\cal D}$. We will see that it appears as the long time limit of the solution of another parabolic type problem, whose initial data is $D$, and thus as the orthogonal projection of the field $D(y)$ (with respect to some scalar product to be defined) on a subset of $\{A \in \loloc{}:[b,A] = 0\mbox{ in } \dpri{}\}$. We assume that \eqref{Equ56} holds true. We introduce the set
\[
H_Q = \{A = A(y)\;:\; \inty{Q(y)A(y) : A(y) Q(y)} < +\infty\}
\]
where $Q = P^{-1}$ and the bilinear application 
\[
(\cdot, \cdot)_Q : H_Q \times H_Q \to \R,\;\;(A,B)_Q = \inty{Q(y)A(y):B(y)Q(y)}
\]
which is symmetric and positive definite. Indeed, for any $A \in H_Q$ we have
\[
(A,A)_Q = \inty{Q^{1/2}AQ^{1/2} : Q^{1/2}AQ^{1/2}} \geq 0
\]
with equality iff $Q^{1/2}AQ^{1/2}= 0$ and thus iff $A = 0$. The set $H_Q$ endowed with the scalar product $(\cdot, \cdot)_Q$ becomes a Hilbert space, whose norm is denoted by $|A|_Q = (A, A)_Q ^{1/2}, A \in H_Q$. Observe that $H_Q \subset \{A(y):A \in \loloc{}\}$. Indeed, if for any matrix $M$ the notation $|M|$ stands for the norm subordonated to the euclidian norm of $\R^m$
\[
|M| = \sup _{\xi \in \R^m \setminus \{0\}} \frac{|M\xi|}{|\xi|} \leq ( M : M ) ^{1/2}
\]
we have for a.a. $y \in \R^m$
\begin{eqnarray}
\label{Equ57} |A(y)| & = & \sup _{\xi, \eta \neq 0} \displaystyle \frac{A(y) \xi \cdot \eta}{|\xi|\;|\eta|} \\
& = & \sup _{\xi, \eta \neq 0} \displaystyle \frac{Q^{1/2}AQ^{1/2} P ^{1/2}\xi \cdot P^{1/2} \eta}{|P^{1/2} \xi|\;|P^{1/2} \eta|}\;\frac{|P^{1/2}\xi|}{|\xi|}\;\frac{|P^{1/2}\eta|}{|\eta|} \nonumber \\
& \leq & |Q^{1/2}AQ^{1/2} |\;|P^{1/2}|^2 \nonumber \\
& \leq & ( Q^{1/2}AQ^{1/2}:Q^{1/2}AQ^{1/2}) ^{1/2} \;|P|.\nonumber 
\end{eqnarray}
We deduce that for any $R>0$
\[
\int_{B_R} |A(y)|\;\md y \leq \int _{B_R} ( Q^{1/2}AQ^{1/2}:Q^{1/2}AQ^{1/2}) ^{1/2} \;|P|\;\md y \leq (A,A)_Q ^{1/2} \left ( \int _{B_R} |P(y)|^2 \;\md y \right ) ^{1/2}.
\]
\begin{remark}
\label{Ortho}
We know by Remark \ref{InverseQ} that $Q_s = {^t \partial _y  Y ^{-1} }(s;y) Q(y) \partial _y Y ^{-1}(s;y) $ which writes ${^t {\cal O}}(s;y) {\cal O}(s;y) = I$ where ${\cal O}(s;y) = Q_s ^{1/2} \dyy Q^{-1/2}$. Therefore the matrix ${\cal O}(s;y)$ are orthogonal and we have
\begin{equation}
\label{Equ58}
Q_s ^{1/2} \dyy Q^{-1/2} = {\cal O}(s;y) = {^t {\cal O}}^{-1} (s;y) = Q_s ^{-1/2} \;{^t \partial _y Y }^{-1} Q^{1/2}
\end{equation}
\begin{equation}
\label{Equ59} 
Q ^{-1/2} \;{^t \dyy} Q_s ^{1/2} = {^t {\cal O}}(s;y) = { {\cal O}}^{-1} (s;y) = Q ^{1/2} { \partial _y Y }^{-1} Q_s^{-1/2}.
\end{equation}
\end{remark}
\begin{pro}
\label{Groupe} The family of applications $A \to G(s)A : = \partial _y Y ^{-1} (s; \cdot) A_s \; {^t \partial _y Y } ^{-1} (s; \cdot)$ is a $C^0$-    group of unitary operators on $H_Q$.
\end{pro}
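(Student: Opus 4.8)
The plan is to verify the three defining features in turn — that $G(s)$ maps $H_Q$ into itself and is a group, that each $G(s)$ is unitary, and that $s\mapsto G(s)A$ is strongly continuous — postponing the continuity to the end, since it is the only genuinely analytic point. I would begin with the two structural identities behind everything. Differentiating the flow relation $Y(s+t;y)=Y(t;Y(s;y))$ in $y$ gives the cocycle identity
\[
\partial _y Y(s+t;y) = \partial _y Y(t;Y(s;y))\,\partial _y Y(s;y),
\]
and, since $\divy b = 0$ forces $\det \partial _y Y(s;y)=1$, every Jacobian occurring here is invertible. Combined with the composition rule $(A_t)_s = A_{s+t}$ (again from $Y(t;Y(s;y))=Y(s+t;y)$) and the consequence $\partial _y Y(t;Y(s;\cdot))^{-1}=\partial _y Y(s;\cdot)\,\partial _y Y(s+t;\cdot)^{-1}$, substituting $G(t)A$ into the definition of $G(s)$ produces a telescoping cancellation yielding $G(s)G(t)=G(s+t)$; and $G(0)=I$ is immediate from $Y(0;y)=y$, $\partial _y Y(0;\cdot)=I$.

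For unitarity I would use the orthogonal matrices ${\cal O}(s;y)={Q_s}^{1/2}\partial _y Y(s;y)Q^{-1/2}$ of Remark \ref{Ortho}, where $Q_s=Q\circ Y(s;\cdot)$. Writing $\tilde A:=Q^{1/2}AQ^{1/2}$, so that $(A,B)_Q=\inty{\tilde A:\tilde B}$ (the polarized form of the identity already used in the excerpt to prove positivity of $(\cdot,\cdot)_Q$), the relations $Q^{1/2}\partial _y Y(s;\cdot)^{-1}={^t{\cal O}}(s;\cdot)\,{Q_s}^{1/2}$ and ${^t\partial _y Y}(s;\cdot)^{-1}Q^{1/2}={Q_s}^{1/2}{\cal O}(s;\cdot)$, read off from \eqref{Equ58}, \eqref{Equ59}, give
\[
Q^{1/2}(G(s)A)Q^{1/2} = {^t{\cal O}}(s;\cdot)\,\tilde A_s\,{\cal O}(s;\cdot),\qquad \tilde A_s=\tilde A\circ Y(s;\cdot).
\]
Since conjugation by an orthogonal matrix preserves the Frobenius product, $ {^t{\cal O}}M{\cal O}:{^t{\cal O}}N{\cal O}=M:N$, and since $Y(s;\cdot)$ is measure preserving, one obtains $(G(s)A,G(s)B)_Q=\inty{\tilde A_s:\tilde B_s}=\inty{\tilde A:\tilde B}=(A,B)_Q$. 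In particular $|G(s)A|_Q=|A|_Q$, so each $G(s)$ is an isometry of $H_Q$ into itself; together with the group law (so $G(s)$ is onto with inverse $G(-s)$) this proves that $G(s)$ is unitary.

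It remains to prove strong continuity, which I expect to be the main obstacle. By the group property and the isometry, $|G(s+h)A-G(s)A|_Q=|G(h)A-A|_Q$, so it suffices to treat $h\to 0$. Using the formula above, $|G(h)A-A|_Q$ is the $L^2(\R^m)$ norm of ${^t{\cal O}}(h;\cdot)\tilde A_h{\cal O}(h;\cdot)-\tilde A$, which I would split as
\[
{^t{\cal O}}(h;\cdot)\tilde A_h{\cal O}(h;\cdot)-\tilde A = {^t{\cal O}}(h;\cdot)(\tilde A_h-\tilde A){\cal O}(h;\cdot) + \left({^t{\cal O}}(h;\cdot)\tilde A\,{\cal O}(h;\cdot)-\tilde A\right).
\]
By orthogonal invariance the first term contributes $\inty{(\tilde A_h-\tilde A):(\tilde A_h-\tilde A)}$, which tends to $0$ because the composition operators $f\mapsto f\circ Y(h;\cdot)$ form a strongly continuous group on $L^2(\R^m)$ (they are unitary by measure preservation, and on the dense set $C_c(\R^m)$ convergence follows from $Y(h;\cdot)\to\mathrm{id}$ and dominated convergence). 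The second term tends to $0$ by dominated convergence, since ${\cal O}(h;y)\to{\cal O}(0;y)=I$ for a.e. $y$ — using $\partial _y Y(0;\cdot)=I$ and the continuity in time of $h\mapsto\partial _y Y(h;y)$ furnished by the regularity $Y\in W^{1,\infty}_{\mathrm{loc}}(\R\times\R^m)$ — while the integrand is dominated by $4\,(\tilde A:\tilde A)\in L^1(\R^m)$. Hence $|G(h)A-A|_Q\to 0$, and the delicate points are precisely the $L^2$-continuity of the composition group and the a.e. convergence ${\cal O}(h;\cdot)\to I$, both resting on the flow regularity and measure preservation.
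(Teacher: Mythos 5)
Your proof is correct, and for the group law and unitarity it follows the paper's own route: the cocycle identity for $\partial _y Y$, the orthogonal matrices ${\cal O}(s;y)$ from \eqref{Equ58}--\eqref{Equ59}, and measure preservation. Where you genuinely diverge is the strong continuity. The paper exploits the isometry to write $|G(s)A-A|_Q^2 = 2|A|_Q^2 - 2(G(s)A,A)_Q$, thereby reducing strong continuity to weak continuity; it then tests against $U \in C^0_c(\R^m)$, moves $G$ onto $U$ by the adjoint relation $(G(s)A,U)_Q=(A,G(-s)U)_Q$, and only has to check strong convergence $G(-s)U \to U$ on that dense set (left as ``easily seen''). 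You instead attack a general $A \in H_Q$ head-on, splitting ${^t {\cal O}}\tilde A_h {\cal O} - \tilde A$ into a pure-composition part, killed by the strong continuity of the Koopman group $f \mapsto f\circ Y(h;\cdot)$ on $\lty{}$, and a pure-conjugation part, killed by dominated convergence. Both arguments are sound; the paper's duality trick localizes all the analysis to compactly supported continuous fields, while yours is more self-contained but must justify ${\cal O}(h;y)\to I$ a.e. Since $Q$ is only assumed in $\ltloc{}$, that convergence is not a consequence of continuity of $Q$; you need the invariance relation \eqref{Equ36} (available from \eqref{Equ56} via Remark \ref{InverseQ}), which expresses $Q_h$ as ${^t \partial _y Y}^{-1}(h;\cdot)\,Q\,\partial _y Y^{-1}(h;\cdot)$ and so reduces the claim to the continuity of $h \mapsto \partial _y Y(h;y)$ that you invoke. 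With that one line added, your argument is complete.
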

\begin{proof}
For any $A\in H_Q$ observe, thanks to \eqref{Equ59}, that
\begin{align*}
\left | \partial _y Y ^{-1}(s; \cdot) A_s {^t\partial _y Y ^{-1}(s; \cdot) }\right | ^2 _Q & 
= \!\!\inty{Q^{1/2}\partial _y Y ^{-1} A_s {^t \partial _y Y ^{-1}}Q^{1/2}:Q^{1/2}\partial _y Y ^{-1} A_s {^t \partial _y Y ^{-1}}Q^{1/2}}\\
& = \!\!\inty{\!\!\!\!{^t {\cal O}} (s;y) Q_s ^{1/2} A_s Q_s ^{1/2} {\cal O}(s;y) \!:\! {^t {\cal O}} (s;y) Q_s ^{1/2} A_s Q_s ^{1/2} {\cal O}(s;y)}\\
& = \inty{Q_s ^{1/2} A_s Q_s ^{1/2} : Q_s ^ {1/2} A_s Q_s ^{1/2}}\\
& = \inty{Q^{1/2}AQ^{1/2} : Q^{1/2}AQ^{1/2}} \\
& = |A|^2 _Q.
\end{align*}
Clearly $G(0)A = A, A\in H_Q$ and for any $s, t \in \R$ we have
\begin{align*}
G(s) G(t) A & = \partial _y Y ^{-1} (s;\cdot) (G(t)A)_s {^t \partial _y Y ^{-1} (s;\cdot)}\\
& = \partial _y Y ^{-1} (s;\cdot) (\partial _y Y )^{-1} (t; Y(s;\cdot))(A_t)_s {^t (\partial _y Y )^{-1} (t; Y(s;\cdot))}{^t \partial _y Y ^{-1} (s;\cdot)} \\
& = \partial _y Y ^{-1} (t + s;\cdot)A_{t+s} {^t \partial _y Y ^{-1} (t + s;\cdot)} = G(t+s) A,\;\;A \in H_Q.
\end{align*}
It remains to check the continuity of the group, {i.e.,} $\lim _{s \to 0 } G(s)A = A$ strongly in $H_Q$ for any $A \in H_Q$. For any $s \in \R$ we have
\begin{align*}
|G(s) A - A|^2 _Q = |G(s)A|^2 _Q + |A|^2 _Q - 2 ( G(s)A, A)_Q = 2|A|^2 _Q - 2 (G(s)A, A)_Q
\end{align*}
and thus it is enough to prove that $\lim _{s \to 0 } G(s)A = A$ weakly in $H_Q$. As $|G(s)| = 1$ for any $s \in \R$, we are done if we prove that $\lim _{s \to 0} (G(s)A, U)_Q = (A, U)_Q$ for any $U \in C^0 _c (\R^m) \subset H_Q$. But it is easily seen that $\lim _{s\to 0} G(-s)U = U$ strongly in $H_Q$, for $U \in C^0 _c (\R^m) $ and thus
\[
\lim _{s \to 0} ( G(s)A, U)_Q = \lim _{s \to 0} (A, G(-s)U)_Q = (A,U)_Q,\;\;U \in C^0 _c (\R^m).
\]
\end{proof}
We denote by $L$ the infinitesimal generator of the group $G$
\[
L:\dom(L) \subset H_Q \to H_Q,\;\;\dom L = \{ A\in H_Q\;:\; \exists \;\lim _{s \to 0} \frac{G(s)A-A}{s}\;\mbox{ in } \;H_Q\}
\]
and $L(A) = \lim _{s \to 0} \frac{G(s)A-A}{s}$ for any $A \in \dom(L)$.
Notice that $C^1 _c (\R^m) \subset \dom(L)$ and $L(A) = b \cdny A - \dyb A - A \;{^t \dyb}$, $A \in C^1 _c (\R^m)$ (use the hypothesis $Q \in \ltloc{}$ and the dominated convergence theorem). Observe also that the group $G$ commutes with transposition {\it i.e.} $G(s) \;{^t A} = {^t G(s)}A$, $s \in \R, A \in H_Q$ and for any $A \in \dom (L)$ we have
$^t A \in \dom (L)$, $L({^t A}) = {^t L(A)}$. 
The main properties of the operator $L$ are summarized below (when $b$ is divergence free).
\begin{pro}
\label{PropOpeL} $\;$\\
1. The domain of $L$ is dense in $H_Q$ and $L$ is closed.\\
2. The matrix field $A \in H_Q$ belongs to $\dom (L)$ iff there is a constant $C >0$ such that 
\begin{equation}
\label{Equ61} |G(s)A - A |_Q \leq C |s|,\;\;s \in \R.
\end{equation}
3. The operator $L$ is skew-adjoint.\\
4. For any $A \in \dom (L)$ we have
\[
- \divy (L(A) \nabla _y ) = b\cdny ( - \divy (A \nabla _y)) + \divy (A \nabla _y ( b \cdny ))\;\mbox{ in } \; \dpri{}
\]
that is
\[
\inty{L(A) \nabla _y u \cdot \nabla _y v } = - \inty{A \nabla _y u \cdot \nabla _y ( b \cdny v)} - \inty{A\nabla _y ( b \cdny u ) \cdot \nabla _y v }
\]
for any $u, v \in C^2 _c (\R^m)$. 
\end{pro}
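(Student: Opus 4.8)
The four assertions split into abstract semigroup facts (items~1--3) and a concrete computation identifying $L$ with a commutator (item~4), and I would organize the proof accordingly. For item~1 I would simply invoke Hille--Yosida theory: by Proposition~\ref{Groupe}, $(G(s))_{s\in\R}$ is a $C^0$-group on the Hilbert space $H_Q$, and the infinitesimal generator of any $C^0$-semigroup is automatically densely defined and closed; applied to the half-group $(G(s))_{s\ge0}$ this gives item~1 at once. For item~3 I would first show $L$ is skew-symmetric: for $A,B\in\dom(L)$, differentiate the identity $(G(s)A,G(s)B)_Q=(A,B)_Q$ (valid since each $G(s)$ is unitary) at $s=0$ to obtain $(L(A),B)_Q+(A,L(B))_Q=0$, i.e. $-L\subseteq L^*$. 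To upgrade this to skew-adjointness I would use that $G$ is a genuine group: both $(G(s))_{s\ge0}$ and $(G(-s))_{s\ge0}$ are $C^0$-semigroups of isometries, with generators $L$ and $-L$, so $(0,\infty)\subset\rho(L)\cap\rho(-L)$ and $\mathrm{Range}(\lambda\mp L)=H_Q$ for $\lambda>0$; together with skew-symmetry this forces $L^*=-L$. (This is exactly Stone's theorem: the generator of a $C^0$-group of unitaries is skew-adjoint.)

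For item~2 the forward implication is a direct estimate: if $A\in\dom(L)$ then $s\mapsto G(s)A$ is $C^1$ with $\tfrac{\md}{\md s}G(s)A=G(s)L(A)$, whence $G(s)A-A=\int_0^s G(\tau)L(A)\,\md\tau$ and, each $G(\tau)$ being unitary, $|G(s)A-A|_Q\le|s|\,|L(A)|_Q$; take $C=|L(A)|_Q$. For the converse, the bound $|G(s)A-A|_Q\le C|s|$ makes the difference quotients $\tfrac{G(s)A-A}{s}$ bounded by $C$ in $H_Q$; extracting a weakly convergent sequence $\tfrac{G(s_n)A-A}{s_n}\rightharpoonup B$ and testing against any $U\in\dom(L)$ via $G(s_n)^*=G(-s_n)$ gives $(B,U)_Q=-(A,L(U))_Q$, i.e. $A\in\dom(L^*)$ with $L^*A=-B$. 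By the skew-adjointness of item~3, $\dom(L^*)=\dom(L)$, so $A\in\dom(L)$ and the strong difference-quotient limit exists. (Alternatively, once item~3 is available, Stone's theorem yields $G(s)=e^{sL}$ and the equivalence follows from Fatou's lemma applied to $\int|e^{is\mu}-1|^2\,\md\|E(\mu)A\|^2$.)

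For item~4, the part genuinely specific to the present setting, I would first treat $A\in C^1_c(\R^m)$, where $L(A)=b\cdny A-\dyb\,A-A\,{}^t\dyb=[b,A]$; the asserted identity is then exactly the distributional relation $-\divy([b,A]\nabla_y)=[b\cdny,-\divy(A\nabla_y)]$ of Proposition~\ref{ComSecOrd}, written in weak form after integrating by parts against $u,v\in C^2_c$. To reach general $A\in\dom(L)$ I would avoid any core argument and instead exploit the change-of-variables identity
\[
\inty{G(s)A\,\nabla_y u\cdot\nabla_y v}=\inty{A\,\nabla_y u_{-s}\cdot\nabla_y v_{-s}},\qquad A\in H_Q,\;u,v\in C^2_c(\R^m),
\]
obtained from $G(s)A=\partial_yY^{-1}(s;\cdot)A_s\,{}^t\partial_yY^{-1}(s;\cdot)$ by substituting $z=Y(s;y)$ (measure preserving) together with the chain-rule identity ${}^t\partial_yY^{-1}(s;y)\nabla_y u(y)=(\nabla_y u_{-s})(Y(s;y))$. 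The functional $A\mapsto\inty{A\,\nabla_y u\cdot\nabla_y v}$ is continuous and linear on $H_Q$ by the embedding \eqref{Equ57}; applying it to $\tfrac{G(s)A-A}{s}\to L(A)$ on the left and differentiating the right-hand side at $s=0$ (using $\tfrac{\md}{\md s}u_{-s}|_{s=0}=-b\cdny u$) yields the claimed formula.

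I expect item~4 to be the main obstacle, since it is where the abstract generator $L$ must be reconciled with the concrete second-order operator; the change-of-variables identity and the justification of differentiation under the integral sign carry the real weight. The latter follows from dominated convergence: for $|s|\le1$ the functions $u_{-s},v_{-s}$ are supported in the fixed compact set $\bigcup_{|s|\le1}Y(s;\supp u)$, their first derivatives are bounded there uniformly in $s$, and $A\in\loloc{}$. Items~1--3, by contrast, are essentially Stone's theorem for the unitary $C^0$-group constructed in Proposition~\ref{Groupe}.
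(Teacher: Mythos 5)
Your proposal is correct, and for items 1, the forward half of 2, and the substance of 4 it coincides with the paper's argument; the interesting divergence is in the logical architecture of items 2--3 and in the packaging of item 4. The paper proves item 2 \emph{before} item 3 and entirely by hand: after extracting a weak limit $V$ of the difference quotients, it shows $\bigl|\frac{G(s_k)A-A}{s_k}\bigr|_Q\leq |V|_Q$ by writing $G(-s_k)U-U=\int_0^{-s_k}L(G(\tau)U)\,\md\tau$ and using density of $\dom(L)$, so that weak convergence plus the norm bound upgrades to strong convergence; item 3 is then deduced \emph{from} item 2 (a functional in $\dom(L^\star)$ satisfies the Lipschitz bound \eqref{Equ61}, hence lies in $\dom(L)$). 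You invert this order: you get skew-adjointness first from Stone's theorem (skew-symmetry plus the range conditions $\ran(I\pm L)=H_Q$ coming from the two contraction semigroups), and then the converse of item 2 is immediate since $\dom(L^\star)=\dom(L)$. Both are sound; you must just be explicit that 3 is established before 2, or the argument would look circular against the paper's ordering. Your route is shorter but leans on the Hille--Yosida/Lumer--Phillips range theorem, while the paper's is longer but self-contained at the Hilbert-space level. For item 4 the two arguments are really the same computation viewed from opposite sides: the paper writes $\inty{L(A)\nabla_y u\cdot\nabla_y v}=(L(A),P\nabla_y v\otimes\nabla_y u\,P)_Q$ and, using skew-adjointness and $G(s)P=P$, computes $L(P\nabla_y v\otimes\nabla_y u\,P)=P\nabla_y(b\cdny v)\otimes\nabla_y u\,P+P\nabla_y v\otimes\nabla_y(b\cdny u)P$; your change-of-variables identity $\inty{G(s)A\,\nabla_y u\cdot\nabla_y v}=\inty{A\,\nabla_y u_{-s}\cdot\nabla_y v_{-s}}$ is precisely the statement $(G(s)A,U)_Q=(A,G(-s)U)_Q$ for that same rank-one test field $U=P\nabla_y v\otimes\nabla_y u\,P$, and differentiating it at $s=0$ reproduces the paper's computation without naming the adjoint. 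Your version has the small advantage of never needing to verify $U\in\dom(L)$ explicitly (only that the linear functional $A\mapsto\inty{A\nabla_y u\cdot\nabla_y v}$ is continuous on $H_Q$, which follows from $U\in H_Q$ or from \eqref{Equ57} and $P\in\ltloc{}$), at the cost of justifying differentiation under the integral, which your dominated-convergence remark handles.
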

\begin{proof}
1. The operator $L$ is the infinitesimal generator of a $C^0$-group, and therefore $\dom(L)$ is dense and $L$ is closed. \\
2.
Assume that $A \in \dom(L)$. We know that $\frac{\md }{\md s} G(s)A = L(G(s)A) = G(s)L(A)$ and thus 
\[
|G(s)A - A|_Q = \left | \int _0 ^t G(\tau) L(A)\;\md \tau\right |_Q \leq \left | \int _0 ^s |G(\tau)L(A)|_Q \;\md \tau \right | = |s| \;|L(A)|_Q,\;\;s \in \R.
\]
Conversely, assume that \eqref{Equ61} holds true. Therefore we can extract a sequence $(s_k)_k$ converging to $0$ such that
\[
\limk \frac{G(s_k) A - A}{s_k} = V \;\mbox{ weakly in } \;H_Q.
\]
For any $U \in \dom (L)$ we obtain
\[
\left ( \frac{G(s_k) A - A}{s_k}, U \right ) _Q = \left ( A, \frac{G(-s_k)U - U}{s_k} \right ) _Q
\]
and thus, letting $k \to +\infty$ yields
\begin{equation}
\label{Equ62} (V, U)_Q = - (A, L(U))_Q.
\end{equation}
But since $U \in \dom (L)$, all the trajectory $\{G(\tau)U:\tau \in \R\}$ is contained in $\dom(L)$ and $G(-s_k)U = U + \int _0 ^{-s_k}L(G(\tau)U)\md \tau$. We deduce 
\begin{align*}
(G(s_k)A - A, U)_Q & = \left ( A, \int _0 ^{-s_k} L(G(\tau)U ) \;\md \tau \right ) \\
& = \int _0 ^{-s_k}( A, L(G(\tau) U))_Q \;\md \tau \\
& = - \int _0 ^{-s_k}( V, G(\tau) U )_Q \;\md \tau \\
& = - \left ( V, \int _0 ^{-s_k} G(\tau)U\;\md \tau \right ) _Q.
\end{align*}
Taking into account that $\left | \int _0 ^{-s_k} G(\tau ) U \md \tau \right |_Q \leq |s_k| \;|U|_Q$ we obtain
\[
\left | \left ( \frac{G(s_k)A - A}{s_k}, U \right ) _Q \right | \leq |V|_Q|U|_Q,\;\;U \in \dom (L)
\]
and thus, by the density of $\dom (L)$ in $H_Q$ one gets
\[
\left | \frac{G(s_k)A - A}{s_k}  \right |_Q \leq |V|_Q,\;\;k \in \N.
\]
Since $V$ is the weak limit in $H_Q$ of $\left ( \frac{G(s_k)A - A}{s_k}  \right )_k$, we deduce that $\limk \frac{G(s_k)A - A}{s_k}  = V$ strongly in $H_Q$. As the limit $V$ is uniquely determined by \eqref{Equ62}, all the family $\left ( \frac{G(s)A - A}{s}  \right )_s$ converges strongly , when $s \to 0$, towards $V$ in $H_Q$ and thus $A \in \dom (L)$.\\
3. For any $U, V \in \dom (L)$ we can write
\[
(G(s)U - U, V)_Q + (U, V - G(-s)V)_Q = 0,\;\;s\in \R.
\]
Taking into account that 
\[
\lim _{s \to 0} \frac{G(s)U - U}{s} = L(U),\;\;\lim _{s \to 0} \frac{V - G(-s)V}{s} = L(V)
\]
we obtain $(L(U), V)_Q + (U, L(V))_Q = 0$ saying that $V\in \dom (L^\star)$ and $L^\star (V) = - L(V)$. Therefore $L \subset (-L^\star)$. It remains to establish the converse inclusion. Let $V \in \dom (L^\star)$, {\it i.e.,} $\exists C >0$ such that 
\[
|(L(U), V)_Q|\leq C|U|_Q,\;\;U \in \dom (L).
\]
For any $s \in \R$, $U \in \dom (L)$ we have
\[
(G(s)V - V , U)_Q = (V, G(-s)U - U)_Q = (V, \int _0 ^{-s}LG(\tau)U \;\md \tau )_Q = \int _0 ^{-s} (V, LG(\tau)U)_Q \;\md \tau
\]
implying 
\[
|(G(s) V - V , U )_Q|\leq C |s| \;|U|_Q,\;\;s\in \R.
\]
Therefore $|G(s)V - V|_Q \leq C |s|, s \in \R$ and by the previous statement $V \in \dom (L)$. Finally $\dom (L) = \dom (L^\star)$ and $L^\star (V) = - L(V), V \in \dom (L) = \dom (L^\star)$.\\
4. As $L$ is skew-adjoint, we obtain
\[
- \inty{L(A)\nabla _y u \cdot \nabla _y v } = - ( L(A), Q^{-1} \nabla _y v \otimes \nabla _y u Q^{-1}\;)_Q = ( A, L ( Q^{-1} \nabla _y v \otimes \nabla _y u Q^{-1})\;)_Q.
\]
Recall that $P = Q^{-1}$ satisfies $L(P) = 0$, that is, $G(s)P = P, s \in \R$ and thus
\begin{align*}
L(Q^{-1} \nabla _y v \otimes & \nabla _y u Q^{-1})  = \lim _{ s \to 0} \frac{G(s)P\nabla _y v \otimes \nabla _y u P - P \nabla _y v \otimes \nabla _y u P}{s} \\
& = \lim _{ s \to 0} \frac{\partial _y Y ^{-1} (s;\cdot) P_s (\nabla _y v )_s \otimes (\nabla _y u )_s P_s {^t \partial _y Y ^{-1}}(s;\cdot) - P \nabla _y v \otimes \nabla _y u P}{s}\\
& = \lim _{ s \to 0} \frac{P{^t \partial _y Y} (s;\cdot) (\nabla _y v )_s \otimes (\nabla _y u )_s \partial _y Y (s;\cdot)P - P \nabla _y v \otimes \nabla _y u P}{s} \\
& = \lim _{ s \to 0} \frac{P \nabla _y v_s \otimes \nabla _y u_s P -P \nabla _y v \otimes \nabla _y u P }{s} \\
& = P \nabla _y ( b \cdny v ) \otimes \nabla _y u P + P \nabla _y v \otimes \nabla _y ( b \cdny u ) P.
\end{align*}
Finally one gets
\begin{align*}
- \inty{L(A) \nabla _y u \cdot \nabla _y v } & = ( A, P \nabla _y ( b \cdny v ) \otimes \nabla _y u P) + P \nabla _y v \otimes \nabla _y ( b \cdny u )P)_Q \\
& = \inty{A\nabla _y u \cdot \nabla _y ( b \cdny v)} + \inty{A\nabla _y ( b \cdny u ) \cdot \nabla _y v }.
\end{align*}
\end{proof}
We claim that $\dom (L)$ is left invariant by some special (weighted with respect to the matrix field $Q$) positive/negative part functions. The notations $A^\pm$ stand for the usual positive/negative parts of a symmetric matrix $A$
\[
A^\pm = S \Lambda ^\pm \;{^t S},\;\;A = S\Lambda \;{^t S}
\]
where $\Lambda, \Lambda ^\pm $ are the diagonal matrix containing the eigenvalues of $A$ and the positive/negative parts of these eigenvalues respectively, and $S$ is the orthogonal matrix whose columns contain a orthonormal basis of eigenvectors for $A$. Notice that 
\[
A^+ : A^- = 0,\;\;A^+ - A^- = A,\;\;A^+ : A^+ + A^- : A^- = A: A.
\]
We introduce also the positive/negative part functions which associate to any field of symmetric matrix $A(y)$ the fields of symmetric matrix $A^{Q\pm}(y)$ given by
\[
Q^{1/2} A^{Q\pm} \;Q^{1/2} = (Q^{1/2} AQ^{1/2})^\pm.
\] 
Observe that $A^{Q+} - A^{Q-} = A$.
\begin{pro}
\label{InvPosNeg}$\;$\\
1. The applications $A \to A^{Q\pm}$ leave invariant the subset $\{A\in \dom (L): {^t A} = A\}$.\\
2. For any $A \in \dom (L), {^t A } = A$ we have
\[
(A^{Q+}, A^{Q-})_Q = 0,\;\;( L(A^{Q+}), L(A^{Q-}))_Q \leq 0.
\]
\end{pro}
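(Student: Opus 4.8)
The plan is to transport everything through the pointwise isometry $A \mapsto \tilde A := Q^{1/2} A Q^{1/2}$, which identifies $(H_Q, (\cdot,\cdot)_Q)$ with the flat space of matrix fields equipped with $\inty{\tilde A : \tilde B}$, and under which, by definition, $A^{Q\pm}$ becomes the usual positive/negative part $\tilde A^{\pm}$. First I would rewrite the group in these coordinates. Combining the definition of $G(s)$ with \eqref{Equ59} and the orthogonality of ${\cal O}(s;y)$ from Remark \ref{Ortho} gives the conjugation formula
\[
Q^{1/2} G(s) A \, Q^{1/2} = {}^t {\cal O}(s;\cdot) \, \tilde A _s \, {\cal O}(s;\cdot), \qquad s \in \R,
\]
where $\tilde A_s$ denotes $\tilde A \circ Y(s;\cdot)$. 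Since taking the positive/negative part of a symmetric matrix commutes both with precomposition by $Y(s;\cdot)$ and with conjugation by the orthogonal matrix ${\cal O}(s;y)$, this immediately yields the key commutation
\[
\widetilde{G(s)(A^{Q\pm})} = \big( \widetilde{G(s)A} \big)^{\pm}, \qquad \mbox{i.e.}\qquad G(s)(A^{Q\pm}) = (G(s)A)^{Q\pm}.
\]

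For part 1, symmetry of $A^{Q\pm}$ is clear (it equals $P^{1/2}\tilde A^{\pm}P^{1/2}$), and $A^{Q\pm} \in H_Q$ because $\tilde A^{\pm} : \tilde A^{\pm} \le \tilde A : \tilde A$ pointwise. To show $A^{Q\pm} \in \dom(L)$ I would use the Lipschitz characterization of $\dom(L)$ from Proposition \ref{PropOpeL}, part 2. Using the commutation above,
\[
\big| G(s) A^{Q\pm} - A^{Q\pm} \big|_Q = \Big( \inty{\big| (\widetilde{G(s)A})^{\pm} - \tilde A ^{\pm} \big|^2} \Big)^{1/2},
\]
so the matter reduces to the pointwise estimate $| M^{\pm} - N^{\pm}| \le |M - N|$ for symmetric matrices $M, N$ in the Frobenius norm, i.e. the fact that $x \mapsto x^{\pm}$, being $1$-Lipschitz on $\R$, lifts to a nonexpansive map on symmetric matrices for the Hilbert--Schmidt norm. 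Granting this, $|G(s)A^{Q\pm} - A^{Q\pm}|_Q \le |G(s)A - A|_Q \le C|s|$ since $A \in \dom(L)$, whence $A^{Q\pm} \in \dom(L)$ by Proposition \ref{PropOpeL}. The main technical point here is precisely the Frobenius-norm Lipschitz transfer, which I would settle by the eigenbasis computation: writing $M, N$ in their eigenbases with eigenvalues $\lambda_i, \mu_j$ and eigenvectors $e_i, g_j$,
\[
\|M^{\pm}-N^{\pm}\|_F^2 = \sum_{i,j} |\lambda_i^{\pm}-\mu_j^{\pm}|^2 |\langle e_i, g_j\rangle|^2 \le \sum_{i,j}|\lambda_i - \mu_j|^2|\langle e_i, g_j\rangle|^2 = \|M-N\|_F^2 .
\]

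For part 2, the orthogonality $(A^{Q+},A^{Q-})_Q = \inty{\tilde A^+ : \tilde A^-} = 0$ is immediate from the pointwise identity $\tilde A^+ : \tilde A^- = 0$. For the second inequality the natural temptation is a second-order Taylor expansion, but that would require $A^{Q\pm} \in \dom(L^2)$, which we do not have; the hard part is to reach a second-order quantity using only membership in $\dom(L)$. I would do this with a difference-quotient argument. Set $\Psi(s) := (G(s)A^{Q+}, A^{Q-})_Q$; passing to $\tilde{}$-coordinates via the commutation, $\Psi(s) = \inty{(\widetilde{G(s)A})^+ : \tilde A^-}$, an integral of traces of products of positive semidefinite matrices, hence $\Psi(s) \ge 0$ for every $s$, with $\Psi(0) = 0$.

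Finally, expanding the inner product of the two difference quotients and using that $G(s)$ is unitary (Proposition \ref{Groupe}) to cancel the two diagonal terms, together with $(A^{Q+},A^{Q-})_Q = 0$ and the identity $(A^{Q+}, G(s)A^{Q-})_Q = \Psi(-s)$,
\[
\Big( \tfrac{G(s)A^{Q+} - A^{Q+}}{s}, \tfrac{G(s)A^{Q-} - A^{Q-}}{s} \Big)_Q = - \frac{\Psi(s) + \Psi(-s)}{s^2} \le 0 .
\]
Letting $s \to 0$, the left-hand side converges to $(L(A^{Q+}), L(A^{Q-}))_Q$ by the strong convergence of the difference quotients (legitimate since $A^{Q\pm}\in \dom(L)$ by part 1) and continuity of the scalar product, which gives $(L(A^{Q+}),L(A^{Q-}))_Q \le 0$, as required. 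The decisive idea of the whole proof is this last manoeuvre, extracting the sign of a genuinely second-order bilinear quantity from the first-order information $A^{Q\pm}\in\dom(L)$ and the sign of $\Psi$.
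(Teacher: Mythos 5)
Your proof is correct and follows essentially the same route as the paper: the conjugation formula through ${\cal O}(s;y)$, the reduction of $A^{Q\pm}\in\dom(L)$ to the Frobenius-norm nonexpansiveness of $M\mapsto M^\pm$ (your eigenbasis double-sum computation is a cleaner proof of exactly the paper's inequality \eqref{Equ65}), and the sign of $(L(A^{Q+}),L(A^{Q-}))_Q$ extracted from difference quotients via unitarity of $G(s)$ and nonnegativity of the trace of a product of positive semidefinite matrices. The only cosmetic difference is that the paper uses centered difference quotients $\frac{G(s/2)-G(-s/2)}{s}$ where you use one-sided ones packaged through $\Psi(s)+\Psi(-s)$; both arguments are sound.
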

\begin{proof}
1. Consider $A \in \dom (L), {^t A} = A$. It is easily seean that ${^t A^{Q\pm}} = A^{Q\pm}$ and
\begin{align*}
|A^{Q+}|^2 _Q  + |A^{Q-}|^2 _Q   & = \inty{(Q^{1/2}A Q^{1/2} ) ^ + : ( Q^{1/2} A Q^{1/2})^+}  \\
& + \inty{(Q^{1/2}A Q^{1/2} ) ^ - : ( Q^{1/2} A Q^{1/2})^-}\\
& = \inty{Q^{1/2}A Q^{1/2} : Q^{1/2} A Q^{1/2}} = |A|^2 _Q < +\infty
\end{align*}
and therefore $A ^{Q\pm} \in H_Q$. The positive/negative parts $A^{Q\pm}$ are orthogonal in $H_Q$
\[
( A^{Q+} , A^{Q-})_Q = \inty{(Q^{1/2}AQ^{1/2})^+ : (Q^{1/2}AQ^{1/2})^-} = 0.
\]
We claim that $A^{Q\pm}$ satisfies \eqref{Equ61}. Indeed, thanks to \eqref{Equ59} we can write, using the notation $X^{:2} = X : X$
\begin{align}
\label{Equ63} |G(s)A^{Q\pm}- A^{Q\pm}|^2 _Q & = 
\inty{\{ Q^{1/2} ( \partial _y Y ^{-1} (A ^{Q\pm})_s {^t \partial _y Y ^{-1}} - A^{Q\pm})Q^{1/2}    \} ^{:2}}\\
& = \inty{\{{^t {\cal O}} (s;y) Q_s ^{1/2} (A ^{Q\pm})_sQ_s ^{1/2} {\cal O}(s;y) - Q^{1/2} A^{Q\pm}Q^{1/2}   \}^{:2}} \nonumber \\
& = \inty{\{ {^t {\cal O}} (s;y) ( Q_s ^{1/2} A_s Q_s ^{1/2})^{\pm} {\cal O}(s;y) - (Q^{1/2}A Q^{1/2} ) ^{\pm} \} ^{:2}}. \nonumber 
\end{align}
Similarly we obtain 
\begin{equation}
\label{Equ64} |G(s)A - A|^2 _Q = \inty{\{{^t {\cal O}}(s;y) Q^{1/2}_s A_s Q^{1/2}_s {\cal O}(s;y) - Q^{1/2}AQ^{1/2}   \} ^{:2}}.
\end{equation}
We are done if we prove that for any symmetric matrix $U, V$ and any orthogonal matrix $R$ we have the inequality
\begin{equation}
\label{Equ65}
( \;{^t R } U ^{\pm} R - V ^\pm \;) : ( \;{^t R } U ^{\pm} R - V ^\pm \;)\leq ( \;{^t R } U  R - V \;):( \;{^t R } U  R - V \;).
\end{equation}
For the sake of the presentation, we consider the case of positive parts $U^+, V^+$. The other one comes in a similar way. The above inequality reduces to
\[
2 \;{^t R } U R : V - 2 \;{^t R } U ^+ R : V ^+ \leq {^t R } U ^- R : {^t R } U ^- R + V^- : V^-
\]
or equivalently, replacing $U$ by $ U^+ - U^-$ and $V$ by $V^+ - V^-$, to
\[
- 2 \;{^t R } U ^+ R : V^- - 2 \;{^t R } U ^- R : V^+ + 2 \;{^t R } U ^- R : V ^- \leq {^t R } U ^- R : {^t R } U ^- R + V ^- : V^-.
\]
It is easily seen that the previous inequality holds true, since ${^t R } U ^+ R : V^- \geq 0$, ${^t R } U ^- R : V^+ \geq 0$ and
\[
2 \;{^t R } U ^- R : V^- \leq 2 ( {^t R} U ^- R :  {^t R} U ^- R) ^{1/2} ( V^- : V^- ) ^{1/2} \leq {^t R} U ^- R :  {^t R} U ^- R + V^- : V^-.
\]
Combining \eqref{Equ63}, \eqref{Equ64} and \eqref{Equ65} with 
\[
U = Q^{1/2} _s A_s Q^{1/2}_s, \;\;V = Q^{1/2}AQ^{1/2},\;\;R = {\cal O}
\]
yields
\[
\sup _{s \neq 0} \frac{|G(s)A^{Q\pm} - A^{Q\pm} |_Q}{|s|} \leq \sup _{s \neq 0} \frac{|G(s)A - A|_Q}{|s|} \leq |L(A)|_Q 
\]
saying that $A^{Q\pm} \in \dom (L)$. \\
2. For any $A \in \dom (L)$, $^t A = A$ we can write
\begin{align*}
(A^{Q+}, A^{Q-})_Q & = \inty{Q^{1/2}A^{Q+}Q^{1/2}: Q^{1/2}A^{Q-}Q^{1/2}} \\
& = \inty{(Q^{1/2}AQ^{1/2})^+ : (Q^{1/2}AQ^{1/2})^-} = 0.
\end{align*}
Since $A^{Q\pm} \in \dom (L)$ we have
\[
L(A^{Q\pm}) = \lim _{s \to 0} \frac{G(s/2) A^{Q\pm} - G(-s/2) A^{Q\pm}}{s}
\]
and therefore, thanks to \eqref{Equ59}, we obtain
\begin{align*}
& ( L(A^{Q+}), L(A^{Q-}))_Q  = \lim _{s \to 0} \left (\frac{G(\frac{s}{2}) A^{Q+} - G(-\frac{s}{2}) A^{Q+}}{s}, \frac{G(\frac{s}{2}) A^{Q-} - G(-\frac{s}{2}) A^{Q-}}{s}   \right ) _Q\\
& = \lim _{s \to 0} \inty{\frac{Q^{1/2} (\;G(\frac{s}{2}) A^{Q+} - G(-\frac{s}{2}) A^{Q+} \;) Q^{1/2} }{s} : \frac{Q^{1/2} (\;G(\frac{s}{2}) A^{Q-} - G(-\frac{s}{2}) A^{Q-} \;) Q^{1/2}}{s} }\\
& = \lim _{s \to 0} \inty{ \frac{{^t {\cal O}(\frac{s}{2};y)}( Q^{1/2}_{\frac{s}{2}} A_{\frac{s}{2}} Q^{1/2}_{\frac{s}{2}})^+ {\cal O}(\frac{s}{2};y) - {^t {\cal O}(-\frac{s}{2};y)}( Q^{1/2}_{-\frac{s}{2}} A_{-\frac{s}{2}} Q^{1/2}_{-\frac{s}{2}})^+ {\cal O}(-\frac{s}{2};y)}{s} \\
& : \frac{{^t {\cal O}(\frac{s}{2};y)}( Q^{1/2}_{\frac{s}{2}} A_{\frac{s}{2}} Q^{1/2}_{\frac{s}{2}})^- {\cal O}(\frac{s}{2};y) - {^t {\cal O}(-\frac{s}{2};y)}( Q^{1/2}_{-\frac{s}{2}} A_{-\frac{s}{2}} Q^{1/2}_{-\frac{s}{2}})^- {\cal O}(-\frac{s}{2};y)}{s}}\\
& = - \lim _{s \to 0} \inty{\frac{{^t {\cal O}(\frac{s}{2};y)}( Q^{1/2}_{\frac{s}{2}} A_{\frac{s}{2}} Q^{1/2}_{\frac{s}{2}})^+ {\cal O}(\frac{s}{2};y) : {^t {\cal O}(-\frac{s}{2};y)}( Q^{1/2}_{-\frac{s}{2}} A_{-\frac{s}{2}} Q^{1/2}_{-\frac{s}{2}})^- {\cal O}(-\frac{s}{2};y)}{s^2}} \\
& - \lim _{ s \to 0}\inty{\frac{{^t {\cal O}(-\frac{s}{2};y)}( Q^{1/2}_{-\frac{s}{2}} A_{-\frac{s}{2}} Q^{1/2}_{-\frac{s}{2}})^+ {\cal O}(-\frac{s}{2};y) : {^t {\cal O}(\frac{s}{2};y)}( Q^{1/2}_{\frac{s}{2}} A_{\frac{s}{2}} Q^{1/2}_{\frac{s}{2}})^- {\cal O}(\frac{s}{2};y)}{s^2}} \\
& \leq 0
\end{align*}
since 
\[
{^t {\cal O}}(\pm s/2;\cdot) ( Q^{1/2} A Q^{1/2}) _{\pm s/2} ^\pm {\cal O}(\pm s/2;\cdot) \geq 0,\;\;{^t {\cal O}}(\mp s/2;\cdot) ( Q^{1/2} A Q^{1/2}) _{\mp s/2} ^\pm {\cal O}(\mp s/2;\cdot) \geq 0.
\]
\end{proof}
We intend to solve the problem \eqref{Equ67}, \eqref{Equ68} by using variational methods. We introduce the space $V_Q = \dom (L) \subset H_Q$ endowed with the scalar product
\[
((A, B))_Q = (A, B)_Q + (L(A), L(B))_Q,\;\;A, B \in V_Q.
\]
Clearly $(V_Q, ((\cdot, \cdot))_Q)$ is a Hilbert space (use the fact that $L$ is closed) and the inclusion $V_Q \subset H_Q$ is continuous, with dense image. The notation $\|\cdot \|_Q$ stands for the norm associated to the scalar product $((\cdot, \cdot))_Q$
\[
\|A\|^2 _Q = ((A, A))_Q = (A, A)_Q + (L(A), L(A))_Q = |A|^2 _Q + |L(A)|^2_Q,\;\;A\in V_Q.
\]
We introduce the bilinear form $\sigma : V_Q \times V_Q \to \R$
\[
\sigma (A, B) = (L(A), L(B))_Q,\;\;A, B \in V_Q.
\]
Notice that $\sigma$ is coercive on $V_Q$ with respect to $H_Q$
\[
\sigma (A, A) + |A|^2_Q = \|A\|^2_Q,\;\;A \in V_Q.
\]
By Theorems 1,2 pp. 620 \cite{DauLions88} we deduce that for any $D \in H_Q$ there is a unique variational solution for \eqref{Equ67}, \eqref{Equ68} that is $A \in C_b (\R_+; H_Q) \cap L^2 (\R_+; V_Q)$, $\partial _t A \in L^2 (\R_+; V_Q ^\prime)$
\[
A(0) = D,\;\;\frac{\md }{\md t } (A(t), U)_Q + \sigma (A(t), U) = 0,\;\;\mbox{in}\;\;\dpri{},\;\;\forall \;U \in V_Q.
\]
The long time limit of the solution of \eqref{Equ67}, \eqref{Equ68} provides the averaged matrix field in \eqref{Equ55}.
\begin{proof}
(of Theorem \ref{AveMatDif}) The identity
\[
\frac{1}{2}\frac{\md }{\md t} |A(t) |^2 _Q + |L(A(t))|^2 _Q = 0,\;\;t \in \R_+
\]
gives the estimates
\[
|A(t)|_Q \leq |D|_Q,\;\;t \in \R_+,\;\;\int _0 ^{+\infty} |L(A(t))|^2 _Q \;\md t\leq \frac{1}{2}|D|^2 _Q.
\]
Consider $(t_k)_k$ such that $t_k \to +\infty$ as $k \to +\infty$ and $(A(t_k))_k$ converges weakly towards some matrix field $X$ in $H_Q$. For any $U \in \ker L$ we have
\[
\frac{\md }{\md t} (A(t), U)_Q = 0,\;\;t \in \R_+
\]
and therefore
\begin{equation}
\label{Equ70} (\mathrm{Proj}_{\ker L} D, U)_Q = (D,U)_Q = (A(0), U)_Q = (A(t_k), U)_Q = ( X, U)_Q,\;\;U \in \ker L.
\end{equation}
Since $L(A) \in L^2 (R_+;H_Q)$ we deduce that $\limk L(A(t_k)) = 0$ strongly in $H_Q$. For any $V \in V_Q$ we have
\[
(X, L(V))_Q = \limk (A(t_k), L(V))_Q = - \limk (L(A(t_k)), V)_Q = 0.
\]
We deduce that $X \in \dom (L^\star) = \dom (L)$ and $L(X) = 0$, which combined with \eqref{Equ70} says that $X = \mathrm{Proj}_{\ker L} D$, or $X = \ave{D}_Q$. By the uniqueness of the limit we obtain $\lim _{t \to +\infty} A(t) = \mathrm{Proj}_{\ker L} D$ weakly in $H_Q$. Assume now that ${^t D } = D$. As $L$ commutes with transposition, we have $\partial _t {^t A} - L (L({^t A})) = 0$, ${^t A }(0) = D$. By the uniqueness we obtain ${^t A } = A$ and thus
\[
^t \ave{D}_Q = \;^t ( \mbox{w}-\lim _{t \to +\infty} A(t) ) = \mbox{w}-\lim _{t \to +\infty} {^t A(t)} = \mbox{w}-\lim _{t \to +\infty} A(t)= \ave{D}_Q.
\]
Suppose that $D\geq 0$ and let us check that $\ave{D}_Q \geq 0$. By Proposition \ref{InvPosNeg} we know that $A^{Q\pm}(t) \in V_Q$, $t \in \R_+$ and
\[
(A^{Q+}(t), A^{Q-}(t))_Q = 0,\;\;(L(A^{Q+}(t)), L(A^{Q-}(t)))_Q \leq  0,\;\;t \in \R_+.
\]
It is sufficient to consider the case of smooth solutions. Multiplying \eqref{Equ67} by $-A^{Q-}(t)$ one gets
\begin{align}
\label{Equ71} \frac{1}{2}\frac{\md }{\md t} |A^{Q-}(t) |^2 _Q + |L(A^{Q-}(t)|^2 _Q & = ( \partial _t A^{Q+}, A^{Q-}(t))_Q + (L(A^{Q+}(t)),L(A^{Q-}(t)) )_Q \\
& \leq  ( \partial _t A^{Q+}, A^{Q-}(t))_Q. \nonumber 
\end{align}
But for any $0 < h < t $ we have
\begin{align*}
(A^{Q+}(t) - A^{Q+}(t-h), A^{Q-}(t))_Q = - (A^{Q+}(t-h), A^{Q-}(t))_Q \leq 0
\end{align*}
and therefore $(\partial _t A^{Q+}(t), A^{Q-}(t))_Q \leq 0 $. Observe that $Q^{1/2}A^{Q-}(0)Q^{1/2} = (Q^{1/2} D Q^{1/2})^- = 0$, since $Q^{1/2} D Q^{1/2}$ is symmetric and positive. Thus $A^{Q-}(0) = 0$, and from \eqref{Equ71} we obtain
\[
\frac{1}{2} |A^{Q-}(t) |^2 _Q \leq \frac{1}{2}|A^{Q-}(0)|^2 _Q = 0
\]
implying that $Q^{1/2} A(t) Q^{1/2} \geq 0$ and $A(t) \geq 0$, $t \in \R_+$. 
Take now any $U \in H_Q$, ${^t U } = U$, $U \geq 0$. By weak convergence we have
\[
( \ave{D}_Q, U)_Q = \lim _{t \to +\infty} (A(t), U)_Q = \lim _{t \to +\infty} \inty{Q^{1/2} A(t)Q^{1/2} :Q^{1/2} UQ^{1/2}  }\geq 0
\]
and thus $\ave{D}_Q \geq 0$. By construction $\ave{D}_Q = \mathrm{Proj}_{\ker L} D \in \ker L$. It remains to justify the second statement in \eqref{Equ72}, and \eqref{Equ72Bis}. Take a bounded function $\varphi \in \liy{}$ which remains constant along the flow of $b$, that is $\varphi _s = \varphi, s \in \R$, and a smooth function $u \in C^1 (\R^m)$ such that $u_s = u, s \in \R$ and
\[
\inty{(\nabla _y u \cdot Q^{-1} \nabla _y u )^2 } < +\infty.
\] 
We introduce the matrix field $U$ given by
\[
U(y) = \varphi (y) Q^{-1} (y) \;\nabla _y u \otimes \nabla _y u \; Q^{-1}(y),\;\;y \in \R^m.
\]
By one hand notice that $U \in H_Q$
\begin{align*}
|U|^2_Q & = \inty{Q^{1/2}UQ^{1/2}:Q^{1/2}UQ^{1/2}} = \inty{\varphi ^2|Q^{-1/2} \nabla _y u |^4}\\
& \leq \|\varphi \|_{L^\infty} ^2\inty{(\nabla _y u \cdot Q^{-1} \nabla _y u )^2 }.
\end{align*}
By the other hand, we claim that $U \in \ker L$. Indeed, for any $s \in \R$ we have
\[
\nabla _y u = \nabla _y u_s = {^t \dyy}(\nabla _y u )_s
\]
and thus
\begin{align*}
Q_s U_s Q_s & = \varphi _s (\nabla _y u )_s \otimes ( \nabla _y u )_s \\
& = \varphi \;( {^t \partial _y Y ^{-1}}\nabla _y u ) \otimes ( {^t \partial _y Y ^{-1}}\nabla _y u ) \\
& = \varphi \;{^t \partial _y Y ^{-1}}\;\nabla _y u \otimes \nabla _y u \; \partial _y Y ^{-1}\\
& = {^t \partial _y Y ^{-1}} QUQ \partial _y Y ^{-1}.
\end{align*}
Taking into account that $Q_s = {^t \partial _y Y ^{-1}}Q { \partial _y Y ^{-1}}$ we obtain
\[
{^t \partial _y Y ^{-1}} Q { \partial _y Y ^{-1}}U_s {^t \partial _y Y ^{-1}} Q { \partial _y Y ^{-1}} = {^t \partial _y Y ^{-1}} Q U Q { \partial _y Y ^{-1}} 
\]
saying that $U_s (y)= \dyy U(y) {^t \dyy}$. As $\ave{D}_Q = \mathrm{Proj}_{\ker L }D$ one gets
\begin{align*}
0 = (D - \ave{D}_Q, U ) _Q & = \inty{(D - \ave{D}_Q) : QUQ} \\
& = \inty{\varphi (y) (D - \ave{D}_Q) :\nabla _y u \otimes \nabla _y u }\\
& = \inty{\varphi (y) \{ \nabla _y u \cdot D \nabla _y u - \nabla _y u \cdot \ave{D}_Q \nabla _y u \}}.
\end{align*}
In particular, taking $\varphi = 1$ we deduce that $\nabla _y u \cdot \ave{D}_Q \nabla _y u \in \loy{}$ and
\[
\inty{\nabla _y u \cdot \ave{D}_Q \nabla _y u } = \inty{\nabla _y u \cdot D \nabla _y u } = (D, Q^{-1}\;\nabla _y u \otimes \nabla _y u \;Q^{-1} )_Q < +\infty
\]
since $D \in H_Q$, $Q^{-1}\nabla _y u \otimes \nabla _y u Q^{-1} \in H_Q$. Since $\ave{D}_Q \in \ker L$, the function $\nabla _y u \cdot \ave{D}_Q \nabla _y u $ remains constant along the flow of $b$
\[
(\nabla _y u )_s \cdot (\ave{D}_Q)_s (\nabla _y u )_s = (\nabla _y u )_s \cdot \dyy \ave{D}_Q \;{^t \dyy} (\nabla _y u )_s = \nabla _y u \cdot \ave{D}_Q\nabla _y u.
\]
Therefore the function $\nabla _y u \cdot \ave{D}_Q \nabla _y u $ verifies the variational formulation
\begin{equation}
\label{Equ73} \nabla _y u \cdot \ave{D}_Q \nabla _y u \in \loy{},\;\;(\nabla _y u \cdot \ave{D}_Q \nabla _y u)_s = \nabla _y u \cdot \ave{D}_Q \nabla _y u,\;\;s \in \R
\end{equation}
and
\begin{equation}
\label{Equ74}
\inty{\nabla _y u \cdot D \nabla _y u \;\varphi } = \inty{\nabla _y u \cdot \ave{D}_Q \nabla _y u\;\varphi },\;\;\forall \;\varphi \in \liy{},\;\varphi _s = \varphi,\; s \in \R.
\end{equation} 
It is easily seen, thanks to the hypothesis $D \in \liy{}$, that \eqref{Equ73}, \eqref{Equ74} also make sense for functions $u \in \hoy{}$ such that $u _s = u$, $s \in \R$. We obtain 
\[
\nabla _y u \cdot \ave{D}_Q \nabla _y u = \ave{\nabla _y u \cdot D \nabla _y u },\;\;u \in \hoy{},\;\;u_s = u,\;\;s\in \R
\]
where the average operator in the right hand side should be understood in the $\loy{}$ setting cf. Remark \ref{AveLone}. Moreover, if $u, v \in \hoy{} \cap \kerbg{}$ then $\ave{D}_Q ^{1/2} \nabla _y u, \ave{D}_Q ^{1/2} \nabla _y v$ belong to $\lty{}$ implying that $\nabla _y u \cdot \ave{D}_Q \nabla _y v \in \loy{}$. As before we check that $\nabla _y u \cdot \ave{D}_Q \nabla _y v$ remains constant along the flow of $b$ and for any $\varphi \in \liy{}$, $\varphi _s = \varphi, s \in \R$ we can write
\begin{align*}
2 \inty{\nabla _y u \cdot D \nabla _y v \;\varphi } & = \inty{\nabla _y (u + v) \cdot D \nabla _y (u + v) \;\varphi}\\
& - \inty{\nabla _y u \cdot D \nabla _y u \;\varphi} - \inty{\nabla _y v \cdot D \nabla _y v \;\varphi}\\
& = \inty{\nabla _y (u + v) \cdot \ave{D}_Q \nabla _y (u + v) \;\varphi}\\
& - \inty{\nabla _y u \cdot \ave{D}_Q \nabla _y u \;\varphi} - \inty{\nabla _y v \cdot \ave{D}_Q \nabla _y v \;\varphi}\\
& = 2 \inty{\nabla _y u \cdot \ave{D}_Q \nabla _y v \;\varphi }.
\end{align*}
Finally one gets
\[
\nabla _y u \cdot \ave{D}_Q \nabla _y v = \ave{\nabla _y u \cdot D \nabla _y v},\;\;u, v \in \hoy{} \cap \kerbg{}.
\]
Consider now $u \in \hoy{} \cap \kerbg{}$ and $\psi \in C^2 _c (\R^m)$. In order to prove that $\ave{\nabla _y u \cdot \ave{D}_Q \nabla _y ( b \cdny \psi )} = 0$, where the average is understood in the $\loy{}$ setting, we need to check that 
\[
\inty{\varphi (y) \;\nabla _y u \cdot \ave{D}_Q \nabla _y ( b \cdny \psi ) } = 0
\]
for any $\varphi \in \liy{}$, $\varphi _s = \varphi, s \in \R$. Clearly $B(y) := \varphi (y) \ave{D}_Q (y) \in \ker L$ and therefore it is enough to prove that 
\[
\inty{\nabla _y u \cdot B \nabla _y ( b \cdny \psi ) }= 0
\]
for any $B \in \ker L$, which comes by the third statement of Proposition \ref{WMFI}.
\end{proof}
\begin{remark}
\label{Parametrization} Assume that there is $u_0$ satisfying $u_0 (\ysy) = u_0 (y) + s$, $s \in \R, y \in \R^m$. Notice that $u_0$ could be multi-valued function (think to angular coordinates) but its gradient satisfies for a.a. $y \in \R^m$ and $ s \in \R$
\[
\nabla _y u_0 = {^t \dyy } (\nabla _y u_0 )_s
\]
exactly as any function $u$ which remains constant along the flow of $b$. For this reason, the last equality in \eqref{Equ72} holds true for any $u, v \in \hoy{} \cap \kerbg{} \cup \{u_0\}$. In the case when $m-1$ independent prime integrals of $b$ are known {\it i.e.,} $\exists u_1, ..., u_{m-1} \in \hoy{}\cap \kerbg{}$, the average of the matrix field $D$ comes by imposing
\[
\nabla _y u_i \cdot \ave{D}_Q \nabla _y u_j = \ave{\nabla _y u_i \cdot D \nabla _y u_j},\;\;i, j \in \{0,...,m-1\}.
\]
\end{remark}

\section{First order approximation}
\label{FirstOrdApp} 
\noindent
We assume that the fields $D(y), b(y)$ are bounded on $\R^m$
\begin{equation}
\label{Equ26} D \in \liy{},\;\;b \in \liy{}.
\end{equation}
We solve \eqref{Equ1}, \eqref{Equ2} by using variational methods. We consider the Hilbert spaces $V:= \hoy{} \subset H := \lty{}$ (the injection $V \subset H$ being continuous, with dense image) and the bilinear forms $\aeps : V \times V \to \R$ given by
\[
\aeps (u,v) = \inty{D(y) \nabla _y u \cdot \nabla _y v } + \frac{1}{\eps} \inty{(b \cdny u ) \;(b \cdny v)},\;\;u, v \in V.
\] 
Notice that for any $0 < \eps \leq 1$ and $v \in V$ we have
\begin{align*}
\aeps (v, v) + d |v|_H ^2 & \geq \inty{D(y) \nabla _y v \cdot \nabla _y v + (b \cdny v ) \;(b \cdny v) } + d \inty{(v(y))^2} \\
& \geq d \inty{|\nabla _y v |^2} +  d \inty{(v(y))^2} \\
& = d |v|_V ^2
\end{align*}
saying that $\aeps$ is coercive on $V$ with respect to $H$. By Theorems 1,2 pp. 620 \cite{DauLions88} we deduce that for any $\uein \in H$, there is a unique variational solution for \eqref{Equ1}, \eqref{Equ2}, that is 
$\ue \in C_b (\R_+; H) \cap L^2(\R_+;V)$ and
\[
\ue (0) = \uein,\;\;\frac{\md}{\md t } \inty{\ue (t,y) v(y) } + \aeps (\ue (t), v) = 0,\;\;\mbox{in}\;\dpri{},\;\;\forall \; v \in V.
\]
By standard arguments one gets
\begin{pro}
\label{UnifEstim}
The solutions $(\ue)_\eps$ satisfy the estimates
\[
\|\ue \|_{C_b (\R_+; H)} \leq |\uein|_H,\;\;\int _0 ^{+\infty} \!\!\!\!\inty{|\nabla _y \ue |^2}\md t \leq \frac{|\uein |^2 _H}{2d}
\]
and
\[
\|b \cdny \ue \|_{L^2(\R_+; H)} \leq \left ( \frac{\eps}{2(1 - \eps)}\right ) ^{1/2} |\uein |_H,\;\;\eps \in (0,1).
\]
\end{pro}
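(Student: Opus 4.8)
The plan is to obtain all three bounds from a single energy estimate produced by testing the variational formulation against the solution itself. Formally, taking $v = \ue(t)$ in
\[
\frac{\md}{\md t}\inty{\ue(t,y) v(y)} + \aeps(\ue(t),v) = 0
\]
yields the energy identity
\[
\frac12 \frac{\md}{\md t}|\ue(t)|_H^2 + \aeps(\ue(t),\ue(t)) = 0, \qquad t \in \R_+ .
\]
The first task is then to bound $\aeps(\ue,\ue)$ from below using the coercivity hypothesis \eqref{Equ3}. Writing $\frac1\eps = 1 + \frac{1-\eps}\eps$, I would split the form as
\[
\aeps(\ue,\ue) = \inty{\left\{ D\nabla_y\ue\cdot\nabla_y\ue + (\bg\ue)^2 \right\}} + \frac{1-\eps}\eps\inty{(\bg\ue)^2} \ge d\inty{|\nabla_y\ue|^2} + \frac{1-\eps}\eps\inty{(\bg\ue)^2},
\]
where the last inequality applies \eqref{Equ3} with $\xi = \nabla_y\ue$ to the first integral and keeps the second term, which is nonnegative precisely because $0 < \eps \le 1$.

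Combining the two displays gives the differential inequality
\[
\frac12 \frac{\md}{\md t}|\ue(t)|_H^2 + d\inty{|\nabla_y\ue|^2} + \frac{1-\eps}\eps\inty{(\bg\ue)^2} \le 0 ,
\]
and integrating from $0$ to $t$ with $\ue(0) = \uein$ produces
\[
\frac12 |\ue(t)|_H^2 + d\intsy{|\nabla_y\ue|^2} + \frac{1-\eps}\eps\intsy{(\bg\ue)^2} \le \frac12 |\uein|_H^2 .
\]
Since each of the three terms on the left is nonnegative, the three conclusions follow in turn: discarding the last two terms gives $|\ue(t)|_H \le |\uein|_H$ for every $t$, hence the $C_b(\R_+;H)$ bound; discarding the first and third and letting $t \to +\infty$ gives $\int_0^{+\infty}\inty{|\nabla_y\ue|^2}\md t \le |\uein|_H^2/(2d)$; and discarding the first and second, then letting $t \to +\infty$ and taking square roots, gives $\|\bg\ue\|_{L^2(\R_+;H)} \le (\eps/(2(1-\eps)))^{1/2}|\uein|_H$ for $\eps \in (0,1)$ (the strict inequality $\eps < 1$ being needed to divide by $1-\eps$).

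The only delicate point — and hence the main obstacle — is the rigorous justification of the energy identity, since the weak formulation holds only for time-independent test functions $v \in V$, whereas $\ue(t)$ depends on $t$. This is the standard parabolic difficulty, resolved by the regularity of the variational solution: one has $\ue \in L^2(\R_+;V)$ with $\partial_t\ue \in L^2(\R_+;V')$, so the Lions--Magenes integration-by-parts lemma (cf. \cite{DauLions88}) ensures that $t \mapsto |\ue(t)|_H^2$ is absolutely continuous and $\frac{\md}{\md t}|\ue(t)|_H^2 = 2\langle \partial_t\ue(t),\ue(t)\rangle_{V',V}$ for a.e. $t$, which legitimizes the identity above. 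This is exactly what is meant by ``standard arguments''; alternatively, one may carry out the estimates on a Galerkin approximation, where testing with the discrete solution is licit, and pass to the limit.
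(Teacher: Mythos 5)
Your proof is correct and is precisely the ``standard argument'' the paper invokes without writing it out: testing with $\ue(t)$, splitting $\frac{1}{\eps} = 1 + \frac{1-\eps}{\eps}$ so that the coercivity hypothesis \eqref{Equ3} applies to $D\nabla_y\ue\cdot\nabla_y\ue + (b\cdot\nabla_y\ue)^2$, and integrating in time; this is the same computation the author performs in the Introduction and in the coercivity check for $\aeps$. Your remark on justifying the energy identity via the Lions--Magenes lemma within the $L^2(\R_+;V)\cap H^1(\R_+;V')$ framework of \cite{DauLions88} is exactly the right way to make it rigorous.
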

We are ready to prove the convergence of the family $(\ue )_\eps$, when $\eps \searrow 0$, towards the solution of the heat equation associated to the averaged diffusion matrix field $\ave{D}_Q$.
\begin{proof} (of Theorem \ref{MainResult1}) Based on the uniform estimates in Proposition \ref{UnifEstim}, there is a sequence $(\eps _k)_k$, converging to $0$, such that 
\[
\uek \rightharpoonup u \;\mbox{ weakly } \star \mbox{ in } L^\infty(\R_+; H),\;\;\nabla _y \uek \rightharpoonup \nabla _y u \;\mbox{ weakly in }\;L^2(\R_+;H).
\]
Using the weak formulation of \eqref{Equ1} with test functions $\eta (t) \varphi (y)$, $\eta \in C^1 _c (\R_+), \varphi \in C^1 _c (\R^m)$ yields
\begin{align}
\label{Equ77} - \intty{\eta ^\prime (t) \varphi (y) \uek (t,y) } & - \eta (0) \inty{\varphi  \uekin } + \intty{\eta  \nabla _y \uek \cdot D \nabla _y \varphi } \nonumber  \\ 
& = - \frac{1}{\eps _k} \intty{\eta (t) ( b \cdny \uek) ( b \cdny \varphi )}. 
\end{align}
Multiplying by $\eps _k$ and letting $k \to +\infty$, it is easily seen that
\[
\intty{\eta (b \cdny u ) \;(b \cdny \varphi ) } = 0.
\]
Therefore $u(t,\cdot) \in \ker {\cal T} = \kerbg$, $t \in \R_+$, cf. Proposition \ref{KerRanTau}. Clearly \eqref{Equ77} holds true for any $\varphi \in V$. In particular, for any $\varphi \in V \cap \kerbg{}$ one gets
\begin{align}
\label{Equ78} - \intty{\eta ^\prime \uek \varphi } - \eta (0) \inty{\uekin \varphi } + \intty{\eta \nabla _y \uek \cdot D \nabla _y \varphi } = 0.
\end{align}
Thanks to the average properties we have
\[
\inty{\uekin \varphi } = \inty{\ave{\uekin} \varphi } \to \inty{\uin \varphi}
\]
and thus, letting $k \to +\infty$ in \eqref{Equ78}, leads to
\begin{align}
\label{Equ79} - \intty{\eta ^\prime u \varphi } - \eta (0) \inty{\uin \varphi } + \intty{\eta \nabla _y u \cdot D \nabla _y \varphi } = 0.
\end{align}
Since $u(t, \cdot), \varphi \in V \cap \kerbg{}$ we have cf. Theorem \ref{AveMatDif}
\[
\inty{\nabla _y u \cdot D \nabla _y \varphi } = \inty{\nabla _y u \cdot \ave{D}_Q \nabla _y \varphi}
\]
and \eqref{Equ79} becomes
\begin{align}
\label{Equ80} - \intty{\eta ^\prime u \varphi } - \eta (0) \inty{\uin \varphi } + \intty{\eta \nabla _y u \cdot \ave{D}_Q \nabla _y \varphi } = 0.
\end{align}
But \eqref{Equ80} is still valid for test functions $\varphi = b \cdny \psi$, $\psi \in C^2 _c (\R^m)$ since $u(t,\cdot) \in \kerbg$, $\uin = \mbox{w}-\lime \ave{\uein} \in \kerbg$ and $\ave{D}_Q \in \ker L$
\[
\inty{u(t,y) b \cdny \psi } = 0,\;\;\inty{\uin b \cdny \psi } = 0,\;\;\inty{\nabla _y u \cdot \ave{D}_Q \nabla _y ( b \cdny \psi ) }= 0
\]
cf. Theorem \ref{AveMatDif}. Therefore, for any $v \in V$ one gets
\[
\frac{\md}{\md t} \inty{u (t,y) v(y) } + \inty{\nabla _y u \cdot \ave{D}_Q \nabla _y v } = 0\;\mbox{ in } \dpri{}
\]
with $u(0) = \uin$. By the uniqueness of the solution of \eqref{Equ75}, \eqref{Equ76} we deduce that all the family $(\ue)_\eps$ converges weakly to $u$.
\end{proof}
\begin{remark}
\label{Propagation} Notice that \eqref{Equ75} propagates the constraint $b \cdny u = 0$, if satisfied initially. Indeed, for any $v \in C^1 _c (\R^m)$ we have
\begin{equation}
\label{Equ81}\frac{\md }{\md t } \inty{u (t,y) v (y) } + \inty{ \nabla _y u \cdot \ave{D}_Q \nabla _y v } = 0\;\mbox{ in } \dpri{}.
\end{equation}
Since $\ave{D}_Q \in \ker L$, we know by the second statement of Proposition \ref{WMFI} that 
\begin{equation*}
%\label{Equ82} 
\inty{\nabla _y u_s \cdot \ave{D}_Q \nabla _y v } = \inty{\nabla _y u \cdot \ave{D}_Q \nabla _y v_{-s}}.
\end{equation*}
Replacing $v$ by $v_{-s}$ in \eqref{Equ81} we obtain
\[
\frac{\md }{\md t } \inty{u_s v } + \inty{\nabla _y u_s \cdot \ave{D}_Q \nabla _y v } = 0\;\mbox{ in } \dpri{}
\]
and therefore $u_s$ solves
\[
\partial _t u_s - \divy ( \ave{D}_Q \nabla _y u_s) = 0,\;\;(t, y) \in \R_+ \times \R^m
\]
and $u_s (0,y) = \uin (\ysy) = \uin (y), y \in \R^m$. By the uniqueness of the solution of \eqref{Equ75}, \eqref{Equ76} one gets $u_s = u$ and thus, at any time $t \in \R_+$, $b \cdny u (t,\cdot) = 0$. 
\end{remark}

\section{Second order approximation}
\label{SecOrdApp}
\noindent
For the moment we have determined the model satisfied by the dominant term in the expansion \eqref{Equ6}. We focus now on second order approximation, that is, a model which takes into account the first order correction term $\eps u ^1$. Up to now we have used the equations \eqref{Equ7}, \eqref{Equ8}. Finding a closure for $u + \eps u ^1$ will require one more equation
\begin{equation}
\label{Equ83} \partial _t u^1 - \divy ( D \nabla _y u^1 ) - \divy ( b \otimes b \nabla _y u^2) = 0,\;\;(t, y) \in \R_+ \times \R ^m.
\end{equation}
Let us see, at least formally, how to get a second order approximation for $(\ue )_\eps$, when $\eps $ becomes small. The first order approximation {\it i.e.}, the closure for $u$, has been obtained by averaging \eqref{Equ8} and by taking into account that $u \in \kerbg{}$
\[
\partial _t u = \ave{\divy( D \nabla _y u ) } = \divy ( \ave{D}_Q \nabla _y u ).
\]
Thus $u^1$ satisfies
\begin{equation}
\label{Equ84} \divy ( \ave{D}_Q \nabla _y u ) - \divy ( D \nabla _y u ) - \divy ( b \otimes b \nabla _y u^1) = 0
\end{equation}
from which we expect to express $u^1$, up to a function in $\kerbg{}$, in terms of $u$.
\begin{proof} (of Theorem \ref{Decomposition})
We claim that $\ran L^2 = \ran L $ and thus $\ran L^2 $ is closed as well. Clearly $\ran L^2 \subset \ran L$. Consider now $Z = L(Y)$ for some $Y \in \dom (L)$. But $Y - \mathrm{Proj}_{\ker L} Y \in \ker L  ^\perp = (\ker L^\star ) ^\perp = \overline{\ran L} = \ran L$ and there  is $X \in \dom (L)$ such that $Y - \mathrm{Proj}_{\ker L} Y = L(X)$. Finally $X \in \dom (L^2)$ and
\[
Z = L(Y) = L(Y - \mathrm{Proj} _{\ker L} Y ) = L(L(X)).
\]
By construction we have $D - \ave{D}_Q \in ( \ker L)^\perp = ( \ker L^\star ) ^\perp = \overline{\ran L} = \ran L = \ran L^2$ and thus there is a unique $F \in \dom (L^2) \cap ( \ker L )^\perp $ such that  $D = \ave{D}_Q - L(L(F))$. As $F \in ( \ker L )^\perp$, there is $C \in \dom (L)$ such that $F = L(C)$ implying that ${^t F} = {^t L(C)} = L ({^t C})$. Therefore ${^t F } \in \dom (L^2) \cap ( \ker L )^\perp$ and satisfies the same equation as $F$
\[
L(L({^t F})) = {^t L}(L(F)) = \ave{D}_Q - D.
\]
By the uniqueness we deduce that $F$ is a field of symmetric matrix.
By Proposition \ref{PropOpeL} we know that
\[
- \divy(L(F) \nabla _y ) = [b \cdot \nabla _y, - \divy ( F \nabla _y )]\;\mbox{ in }\; \dpri{}
\]
{\it i.e.,} 
\[
\inty{L(F) \nabla _y u \cdot \nabla _y v } = - \inty{F \nabla _y u \cdot \nabla _y ( b \cdny v ) } - \inty{F \nabla _y ( b \cdny u ) \cdny v }
\]
for any $u, v \in C^2 _c (\R^m)$. Similarly, $E := L(F)$ satisfies
\[
- \divy ( L^2 (F) \nabla _y ) = - \divy (L(E) \nabla _y ) = [b\cdny, - \divy ( E \nabla _y )]\;\mbox{ in }\;\dpri{}
\]
and thus, for any $u, v \in C^3_c (\R^m)$ one gets
\begin{align*}
& \inty{(\ave{D}_Q - D) \nabla _y u \cdny v }  = \inty{L^2(F)\nabla _y u \cdny v } \\
& = - \inty{L(F) \nabla _y u \cdny ( b \cdny v ) }- \inty{L(F) \nabla _y ( b \cdny u ) \cdny v } \\
& = \inty{F \nabla _y u \cdny ( b \cdny ( b \cdny v ))} + \inty{F \nabla _y ( b \cdny u ) \cdny ( b \cdny v)} \\
& + \inty{F \nabla _y ( b \cdny u ) \cdny ( b \cdny v)} + \inty{F \nabla _y ( b \cdny ( b \cdny u )) \cdny v}.
\end{align*}
\end{proof}
\noindent
The matrix fields $F \in \dom (L^2)$ and $E = L(F) \in \dom (L)$ have the following properties.
\begin{pro}
\label{PropOpeF} For any $u, v \in C^1 (\R^m)$ which are constant along the flow of $b$ we have in $\dpri{}$
\[
D \nabla _y u \cdny v - \ave{D}_Q \nabla _y u \cdny v = - b \cdny ( E \nabla _y u \cdny v ) = - \divy ( b \otimes b \nabla _y ( F \nabla _y u \cdny v ))
\]
and
\[
\ave{E \nabla _y u \cdny v } = \ave{ F \nabla _y u \cdny v } = 0.
\]
In particular 
\[
\inty{E \nabla _y u \cdny v } = \inty{\ave{E \nabla _y u \cdny v}}= 0
\]
\[
\inty{F \nabla _y u \cdny v } = \inty{\ave{F \nabla _y u \cdny v}}= 0
\]
saying that $\ave{\divy ( E \nabla _y u ) } = \ave{\divy ( F \nabla _y u )} = 0$ in $\dpri{}$.
\end{pro}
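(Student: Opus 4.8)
The plan is to reduce the entire statement to a single pushforward identity linking the group $G$ to the scalar fields $A\nabla _y u \cdot \nabla _y v$. Fix $u, v \in C^1(\R^m)$ constant along the flow of $b$. From $u_s = u$ and $v_s = v$ I get, by differentiating $u \circ Y(s;\cdot) = u$ in $y$, that $(\nabla _y u )_s = {}^t \partial _y Y^{-1}(s;\cdot)\,\nabla _y u$, and likewise for $v$. Substituting these into $(A \nabla _y u \cdot \nabla _y v)_s = A_s (\nabla _y u)_s \cdot (\nabla _y v)_s$ and transferring the two factors ${}^t \partial _y Y^{-1}$ onto $A_s$ (this uses only $X\cdot Y = Y \cdot X$, no symmetry of $A$), I obtain for every $A \in H_Q$ and $s \in \R$
\[
(A \nabla _y u \cdot \nabla _y v)_s = (G(s)A)\,\nabla _y u \cdot \nabla _y v \quad\mbox{in } \loloc{}.
\]

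I then differentiate this identity at $s = 0$ in $\dpri{}$. On the left, for any $w \in \loloc{}$ one has $\frac{\md}{\md s}w_s\big|_{s=0} = b\cdny w$ in $\dpri{}$ because $Y$ is measure preserving (pairing $\inty{w_s \psi} = \inty{w\,\psi _{-s}}$ and differentiating). On the right, for $A \in \dom(L)$ the map $s \mapsto G(s)A$ is differentiable in $H_Q$ with derivative $L(A)$, and since $H_Q \hookrightarrow \loloc{}$ continuously this passes to the pairing against the compactly supported weight $\nabla _y u \otimes \nabla _y v$ times a test function. This yields the master identity
\[
b\cdny(A \nabla _y u \cdot \nabla _y v) = L(A)\,\nabla _y u \cdot \nabla _y v \quad\mbox{in } \dpri{},\;\;A \in \dom(L).
\]
Applying it to $A = E = L(F)$ and using $L(E) = L^2(F) = \ave{D}_Q - D$ gives the first displayed equality. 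Applying it to $A = F$ gives $b\cdny(F\nabla _y u \cdot \nabla _y v) = E\nabla _y u \cdot \nabla _y v$, and then, since $\divy b = 0$ forces $\divy(b\otimes b\nabla _y w) = b\cdny(b\cdny w)$, applying $b\cdny$ once more produces the second displayed equality.

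For the averages I observe that the master identity exhibits $L(A)\nabla _y u \cdot \nabla _y v$ as an element of $\ran(b\cdny) = \ker\ave{\cdot}$, hence $\ave{L(A)\nabla _y u \cdot \nabla _y v} = 0$ for every $A \in \dom(L)$. Since $E = L(F)$ with $F \in \dom(L)$, this gives $\ave{E\nabla _y u \cdot \nabla _y v} = 0$; and since $L$ has closed range, $F \in (\ker L)^\perp = \ran L$, so $F = L(C)$ for some $C \in \dom(L)$ and the same identity gives $\ave{F\nabla _y u \cdot \nabla _y v} = 0$. Because the flow preserves Lebesgue measure one has $\inty{w} = \inty{\ave{w}}$, whence both integrals vanish; and pairing $\divy(E\nabla _y u)$, $\divy(F\nabla _y u)$ against test functions $\varphi$ constant along the flow (for which $\ave{\varphi} = \varphi$) reduces, by self-adjointness of $\ave{\cdot}$, exactly to $\inty{E\nabla _y u \cdot \nabla _y \varphi} = \inty{F\nabla _y u \cdot \nabla _y \varphi} = 0$, giving $\ave{\divy(E\nabla _y u)} = \ave{\divy(F\nabla _y u)} = 0$ in $\dpri{}$.

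The main obstacle is the rigorous justification of the termwise differentiation in $s$: one must make sense of both derivatives in $\dpri{}$ simultaneously while $A \in H_Q \subset \loloc{}$ is merely locally integrable and $\nabla _y u, \nabla _y v$ are only continuous, controlling them against compactly supported weights. An alternative that sidesteps differentiation is to read the integrated identity $\inty{L(A)\nabla _y u \cdot \nabla _y v} = 0$ directly off statement 4 of Proposition \ref{PropOpeL}, whose two boundary terms vanish because $b\cdny u = b\cdny v = 0$; but recovering the localized distributional form still requires the group argument above.
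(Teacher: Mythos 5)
Your proof is correct and reaches all the stated conclusions, but it is organized around a different pivot than the paper's. The paper never writes your ``master identity''; instead it observes that, for $u,v$ constant along the flow, the matrix field $U=\varphi\,Q^{-1}\nabla _y v\otimes\nabla _y u\,Q^{-1}$ lies in $\dom (L)$ with $L(U)=(b\cdot\nabla _y\varphi)\,Q^{-1}\nabla _y v\otimes\nabla _y u\,Q^{-1}$, and then pairs the operator identities $D-\ave{D}_Q=-L(E)$ and $E=L(F)$ against $U$, using the skew-adjointness of $L$ to move $L$ onto $U$ and hence onto $\varphi$. That computation is exactly the weak form of your identity $b\cdot\nabla _y(A\nabla _y u\cdot\nabla _y v)=L(A)\,\nabla _y u\cdot\nabla _y v$, which you derive in strong (distributional) form by differentiating the pushforward relation $(A\nabla _y u\cdot\nabla _y v)_s=(G(s)A)\nabla _y u\cdot\nabla _y v$; both rest on the same structural fact, namely the $G(s)$-invariance of $Q^{-1}\nabla _y v\otimes\nabla _y u\,Q^{-1}$. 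The genuine divergence is in the vanishing of the averages: the paper takes $\varphi\in\kerbg{}$, so that $U\in\ker L$, and concludes from $E,F\in(\ker L)^\perp$ alone, whereas you represent both scalar quantities as exact $b$-derivatives, writing $F=L(C)$, which invokes the closed-range hypothesis one extra time (harmless here, since $E$ and $F$ only exist under the hypotheses of Theorem \ref{Decomposition}, but the orthogonality argument is slightly more economical and yields the stronger statement $\ave{\varphi\,F\nabla _y u\cdot\nabla _y v}$ directly for every invariant weight $\varphi$). Your closing worry about justifying the termwise differentiation is legitimate but no worse than the paper's own level of rigor: pairing against $\psi\in C_c(\R^m)$ puts $\psi\,Q^{-1}\nabla _y v\otimes\nabla _y u\,Q^{-1}$ in $H_Q$ thanks to $P\in\ltloc{}$, and the same integrability questions (in which Lebesgue setting $\ave{E\nabla _y u\cdot\nabla _y v}$ is meant for merely $C^1$ data) are left implicit in the paper as well.
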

\begin{proof}
Consider $\varphi \in C^1 _c (\R^m)$, $u, v \in C^1 (\R^m)$ such that $u_s = u, v_s = v$, $s \in \R$ and the matrix field $U = \varphi Q^{-1} \nabla _y v \otimes \nabla _y u Q^{-1} \in H_Q$. Actually $U \in \dom (L)$ and, as in the proof of the last statement in Proposition \ref{PropOpeL}, one gets
\begin{align*}
L(U) & = (b \cdny \varphi ) Q^{-1} \nabla _y v \otimes \nabla _y u Q^{-1} + \varphi \;L ( Q^{-1} \nabla _y v \otimes \nabla _y u Q^{-1}) \\
& =  (b \cdny \varphi)  Q^{-1} \nabla _y v \otimes \nabla _y u Q^{-1}
\end{align*}
since $ Q^{-1} \nabla _y v \otimes \nabla _y u Q^{-1} \in \ker (L)$. Multiplying by $U$ the equality $D - \ave{D}_Q = - L(E)$, $E = L(F)$, one gets
\[
\inty{\varphi ( D - \ave{D}_Q)\nabla _y u \cdny v } = - (L(E), U)_Q = (E, L(U))_Q = \inty{(b \cdny \varphi) ( E \nabla _y u \cdny v )}
\]
implying that $D \nabla _y u \cdny v = \ave{D}_Q \nabla _y u \cdny v - b \cdny ( E \nabla _y u \cdny v)$ in $\dpri{}$. Multiplying by $U$ the equality $E = L(F)$ yields
\[
\inty{\varphi E \nabla _y u \cdny v } = (E, U)_Q = (L(F), U)_Q = - (F, L(U))_Q = - \inty{(b \cdny \varphi) F \nabla _y u \cdny v}.
\]
We obtain 
\[
E \nabla _y u \cdny v = b \cdny ( F \nabla _y u \cdny v) \;\mbox{ in }\; \dpri{}
\]
and thus
\[
D \nabla _y u \cdny v - \ave{D}_Q \nabla _y u \cdny v = - b \cdny (E \nabla _y u \cdny v ) = - b \cdny ( b \cdny ( F \nabla _y u \cdny v ))
\]
in $\dpri{}$. Consider now $U = \varphi Q^{-1} \nabla _y v \otimes \nabla _y u Q^{-1}$ with $\varphi \in \kerbg{}$. We know that $L(U) = 0$ and since, by construction $F \in (\ker L )^\perp$, we deduce 
\[
\inty{\varphi F \nabla _y u \cdny v } = (F, U)_Q = 0
\]
saying that $\ave{F \nabla _y u \cdny v} = 0$. Similarly $E = L(F) \in (\ker L )^\perp$ and $\ave{E \nabla _y u \cdny v } = 0$.
\end{proof}
\begin{remark}
\label{ParametrizationBis}
Assume that there is $u_0$ (eventually multi-valued) satisfying $u_0 (\ysy{}) = u_0 (y) + s$, $s \in \R, y \in \R^m$. Its gradient changes along the flow of $b$ exactly as the gradient of any function which is constant along this flow cf. Remark \ref{Parametrization}. We deduce that $Q^{-1} \nabla _y v \otimes \nabla _y u Q^{-1} \in \ker L$ for any $u, v \in \kerbg \cup \{u_0\}$ and therefore the arguments in the proof of Proposition \ref{PropOpeF} still apply when $u, v \in \kerbg{} \cup \{u_0\}$. In the case when $m-1$ independent prime integrals $\{u_1, ..., u_{m-1}\}$ of $b$ are known, the matrix fields $E, F$ come, by imposing for any $i, j \in \{0,1,...,m-1\}$
\[
- b \cdny (E \nabla _y u_i \cdny u _j) = D \nabla _y u_i \cdny u_j - \ave{D \nabla _y u_i \cdny u_j},\;\;\ave{E \nabla _y u_i \cdny u_j} = 0
\]
and
\[
b \cdny (F \nabla _y u_i \cdny u _j) = E \nabla _y u_i \cdny u _j,\;\;\ave{F \nabla _y u_i \cdny u _j } = 0. 
\]
\end{remark}
We indicate now sufficient conditions which guarantee that the range of $L$ is closed.
\begin{pro}
\label{CompleteIntegr} Assume that \eqref{Equ21}, \eqref{Equ22}, \eqref{Equ23} hold true and that there is a matrix field $R(y)$ such that \eqref{Equ90} holds true. Then the range of $L$ is closed. 
\end{pro}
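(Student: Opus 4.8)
The plan is to use the field $R$ furnished by \eqref{Equ90} to conjugate the skew-adjoint operator $L$ into a finite orthogonal direct sum of copies of the \emph{scalar} transport operator $\bg$, whose range is already known to be closed under \eqref{Equ23}, and then to transport closedness back through the conjugation.

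First I would read off from \eqref{Equ90} how $R$ is carried by the characteristic flow. The distributional identity $\bg R + R\dyb = 0$ should be equivalent, by the characteristic argument behind Proposition \ref{MFI} (differentiate $R(\ysy)\dyy$ in $s$ and use $\frac{\md}{\md s}\dyy = \dyb(\ysy)\dyy$), to the pointwise relation
\[
R(\ysy)\,\dyy = R(y),\qquad s\in\R,\ y\in\R^m,
\]
that is $R_s\,\dyy = R$, equivalently $\partial_y Y^{-1}(s;\cdot)=R^{-1}R_s$. This single fact about $R$ is the one I will exploit.

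Next I introduce the Hilbert space $\mathcal H:=L^2(\R^m;\mathcal M_m(\R))$ equipped with the flat product $\inty{B:B'}$, and the map $\Phi(A)=R\,A\,{^t R}$. Since $Q={^t R}R$, a pointwise trace computation gives $(R\,A\,{^t R}):(R\,A\,{^t R})=QA:AQ$, so that $\|\Phi(A)\|_{\mathcal H}=|A|_Q$; as $R$ is invertible almost everywhere, $\Phi$ is a unitary isomorphism of $H_Q$ onto $\mathcal H$, with $\Phi^{-1}(B)=R^{-1}B\,({^t R})^{-1}$. Using $\partial_y Y^{-1}(s;\cdot)=R^{-1}R_s$ one then checks the intertwining $\Phi\,G(s)=\tau(s)\,\Phi$, where $\tau(s)B:=B_s$ is the composition group on $\mathcal H$: indeed $R\,(\partial_y Y^{-1}A_s\,{^t \partial_y Y^{-1}})\,{^t R}=R_s A_s\,{^t R_s}=(R\,A\,{^t R})_s$. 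Because $Y$ is measure preserving, $\tau(\cdot)$ is a $C^0$-group of unitary operators on $\mathcal H$, whose infinitesimal generator is the entrywise operator $B\mapsto\bg B$, i.e. the orthogonal direct sum $\bigoplus_{i,j}\bg$ of $m^2$ copies of the scalar operator of Section \ref{AveOpe}. Intertwining two $C^0$-groups by the bounded invertible $\Phi$ forces their generators to be conjugate with matching domains: for $A\in\dom(L)$ one has $\frac{\tau(s)\Phi A-\Phi A}{s}=\Phi\frac{G(s)A-A}{s}\to\Phi L(A)$, and symmetrically for $\Phi^{-1}$, whence $\Phi(\dom L)=\dom(\bigoplus_{i,j}\bg)$ and $\bigoplus_{i,j}\bg=\Phi\,L\,\Phi^{-1}$.

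It then only remains to invoke \eqref{Equ23}: by Propositions \ref{Inverse} and \ref{KerRanTau} the range of the scalar operator $\bg$ in $\lty$ is closed, being equal to $\ker\ave{\cdot}$. Hence $\ran(\bigoplus_{i,j}\bg)=\bigoplus_{i,j}\ran(\bg)$ is a finite direct sum of closed subspaces, thus closed in $\mathcal H$, and since $\Phi$ is a homeomorphism, $\ran L=\Phi^{-1}(\ran(\bigoplus_{i,j}\bg))$ is closed in $H_Q$. The genuinely delicate part lies entirely in the first step: passing from the distributional equation \eqref{Equ90} to the pointwise flow identity for the merely locally integrable $R$, together with verifying that $\Phi$ really maps $H_Q$ \emph{onto} all of $\mathcal H$ and is bounded both ways. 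Once the intertwining $\Phi\,G(s)=\tau(s)\,\Phi$ is secured, the reduction to the scalar transport operator and the closed-range conclusion are immediate.
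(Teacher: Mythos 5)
Your proposal is correct and follows essentially the same route as the paper: the key step in both is the unitary intertwining $A\mapsto RA\,{}^tR$ (the paper's map $i$) sending $G(s)$ to translation along the flow and hence $L$ to the entrywise operator $\bg$. The only cosmetic difference is the endgame --- the paper applies the Poincar\'e inequality \eqref{Equ23} to $i(A)$ for $A\in(\ker L)^\perp$ to get $|A|_Q\leq C_P|L(A)|_Q$ directly, whereas you pull back the closedness of $\ran(\bg)$ through the conjugation; these are equivalent.
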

\begin{proof}
Observe that \eqref{Equ90} implies \eqref{Equ56}. Indeed, it is easily seen that $b \cdny R + R \partial _y b = 0$ in $\dpri{}$ is equivalent to $R = R_s \partial _y Y ( s; \cdot)$, $s \in \R$. We deduce that $P = R ^{-1} \;{^t R} ^{-1}$ satisfies
\[
G(s)P = \partial _y Y ^{-1} (s; \cdot) P_s {^t  \partial _y Y ^{-1} (s; \cdot)} =  \partial _y Y ^{-1} (s; \cdot)R_s ^{-1} \;{^t R_s} ^{-1}  \;{^t \partial _y Y ^{-1} (s; \cdot)} = R^{-1} \;{^t R}^{-1} = P
\]
saying that $[b,P] = 0$ in $\dpri{}$. Therefore we can define $L$ as before, on $H_Q$, which coincides in this case with $\{A:RA\;{^t R} \in \lty{}\}$. We claim that $i \circ L = ( b \cdny ) \circ i$ where $i : H_Q \to \lty{}$, $i(A) = R A\; {^t R}$, $A \in H_Q$, which comes immediately from the equalities
\[
(i\circ G(s))A = RG(s)A {^t R} = R \partial _y Y ^{-1}( s; \cdot ) A_s {^t \partial _y Y }^{-1} {^t R} = R_s A_s {^t R_s} = (i(A))_s,\;s\in \R, A\in H_Q.
\]
In particular we have
\[
\ker L = \{A \in H_Q\;:\; i(A) \in \kerbg\}
\]
and
\begin{align*}
(\ker L )^\perp & = \{A \in H_Q\;:\; \inty{i(A) : U }
= 0\;\forall\;U \in \kerbg{}\} \\
& = \{A \in H_Q\;:\; i(A) \in ( \kerbg)^\perp \}.
\end{align*}
For any $A \in (\ker L)^\perp$ we can apply the Poincar\'e inequality \eqref{Equ23} to $i(A) \in (\kerbg)^\perp$ and we obtain
\[
|A|_Q = |i(A)|_{L^2} \leq C_P |b \cdny (i(A))|_{L^2} = C_P |i (L(A))|_{L^2} = C_P |L(A)|_Q.
\]
Therefore $L$ satisfies a Poincar\'e inequality as well, and thus the range of $L$ is closed. 
\end{proof}
\begin{remark}
\label{ClosedRanL}
The hypothesis  $b \cdny R + R \dyb = 0$ in $\dpri{}$ says that the columns of $R^{-1}$ form a family of $m$ independent vector fields in involution with respect to $b$, cf. Proposition \ref{VFI}
\[
R_s ^{-1} (y) = \dyy R ^{-1} (y),\;\;s\in \R,\;\;y \in \R^m.
\]
\end{remark}
\begin{remark}
\label{ExplicitAve} For any $U \in \ker L$, that is $i(U) \in \kerbg{}$, we have
\[
\inty{R ( D - \ave{D}_Q) {^t R } : i(U)} = 0.
\]
As $\ave{D}_Q \in \ker L $, we know that $i(\ave{D}_Q) = R \ave{D}_Q {^t R } \in \kerbg{}$ and thus the matrix field $R \ave{D}_Q {^t R }$ is the average (along the flow of $b$) of the matrix field $RD\;{^t R}$, which allows us to express $\ave{D}_Q$ in terms of $R$ and $D$
\[
R \ave{D}_Q {^t R } = \ave{R D \;{^t R }}.
\]
\end{remark}
From now on we assume that \eqref{Equ90} holds true. Applying the decomposition of  Theorem \ref{Decomposition} with the dominant term $u \in \kerbg$ in the expansion \eqref{Equ6} and any $v \in C^3 _c (\R^m)$ yields
\[
\inty{(D - \ave{D}_Q) \nabla _y u \cdny v } = - \inty{F \nabla _y u \cdny ( b \cdny ( b \cdny v ))}.
\]
From \eqref{Equ84} one gets
\[
\inty{(D - \ave{D}_Q ) \nabla _y u \cdny v } - \inty{u^1 b \cdny ( b \cdny v )} = 0
\]
and thus 
\begin{equation}
\label{CorrSplit}
u^1 = \divy ( F \nabla _y u ) + v^1,\;\;v^1 \in \ker ( b \cdny ( b \cdny )) = \kerbg.
\end{equation}
Notice that $\ave{u^1} = v^1$, since $\ave{\divy ( F \nabla _y u )} = 0$, cf. Proposition \ref{PropOpeF}. The time evolution for $v^1 = \ave{u^1}$ comes by averaging \eqref{Equ83}
\[
\partial _t v ^1 - \ave{\divy ( D \nabla _y v^1)} - \ave{\divy ( D \nabla _y ( \divy ( F \nabla _y u )))} = 0.
\]
As $v^1 \in \kerbg$ we have
\[
- \ave{\divy ( D \nabla _y v^1)} = - \divy ( \ave{D}_Q \nabla _y v^1)
\]
and we can write, with the notation $w^1 = \divy (F \nabla _y u)$
\begin{align}
\label{Equ86} \partial _t \{u + \eps u^1\} - \divy ( \ave{D}_Q \nabla _y \{u + \eps u^1\}) = \eps \partial _t w^1 - \eps \divy ( \ave{D}_Q \nabla _y w^1 ) + \eps \ave{\divy ( D \nabla _y w^1)}. 
\end{align}
But the time derivative of $w^1$ is given by 
\[
\partial _t w^1 = \divy ( F \nabla _y \partial _t u ) = \divy ( F \nabla _y ( \divy ( \ave{D}_Q \nabla _y u )))
\]
which implies
\begin{align*}
\partial _t w^1 - \divy ( \ave{D}_Q\nabla _y w^1) & = \divy ( F \nabla _y ( \divy ( \ave{D}_Q \nabla _y u )))- \divy ( \ave{D}_Q \nabla _y ( \divy ( F \nabla _y u ))) \\
& = - [\divy(\ave{D}_Q \nabla _y ), \divy ( F \nabla _y )]u.
\end{align*}
Up to a second order term, the equation \eqref{Equ86} writes
\begin{align}
\label{Equ102} \partial _t \{u + \eps u^1\} - \divy ( \ave{D}_Q \nabla _y \{u + \eps u ^1\}) & + \eps [\divy(\ave{D}_Q \nabla _y ), \divy ( F \nabla _y )]\{u + \eps u^1\} \nonumber \\
& - \eps \ave{\divy ( D \nabla _y ( \divy ( F \nabla _y u )))} = {\cal O}(\eps ^2).
\end{align}
We claim that for any $u \in \kerbg$ we have
\begin{equation}
\label{Equ87} \ave{\divy ( D \nabla _y ( \divy ( F \nabla _y u)))} = \ave{\divy ( E \nabla _y ( \divy ( E \nabla _y u )))}.
\end{equation}
By Proposition \ref{PropOpeF} we know that $\ave{\divy ( F \nabla _y u )} = 0$. As $L(\ave{D}_Q) = 0$ we have 
\[
[b \cdny, - \divy ( \ave{D}_Q \nabla _y )] = - \divy ( L ( \ave{D}_Q) \nabla _y ) = 0
\]
and thus $\divy ( \ave{D}_Q\nabla _y)$ leaves invariant the subspace of functions which are constant along the flow of $b$. By the symmetry of the operator $\divy ( \ave{D}_Q \nabla _y )$, we deduce that the subspace of zero average functions is also left invariant by $\divy ( \ave{D}_Q \nabla _y )$. Therefore $\ave{\divy ( \ave{D}_Q \nabla _y ( \divy ( F \nabla _y u )))} = 0$ and
\[
\ave{\divy ( D \nabla _y ( \divy ( F \nabla _y u )))} = \ave{\divy ((D - \ave{D}_Q) \nabla _y ( \divy ( F \nabla _y u )))}.
\]
Thanks to Theorem \ref{Decomposition} we have
\begin{align*}
\divy((D - \ave{D}_Q)\nabla _y ) & = [b \cdny, [b \cdny, - \divy ( F \nabla _y )]\;] \\
& = [b \cdny, - \divy (L(F)\nabla _y )]\\
& = [b \cdny, - \divy (E\nabla _y )]
\end{align*}
which implies that 
\begin{align*}
& \ave{\divy ( D \nabla _y ( \divy ( F \nabla _y u )))}  = \ave{\divy ( (D - \ave{D}_Q) \nabla _y ( \divy ( F \nabla _y u )))}  \\
& = \ave{\divy ( E \nabla _y ( b \cdny ( \divy ( F \nabla _y u )))) - b \cdny ( \divy ( E \nabla _y ( \divy ( F \nabla _y u ))))}\\
& = \ave{\divy ( E \nabla _y ( b \cdny ( \divy ( F \nabla _y u ))))}.
\end{align*}
Finally notice that 
\[
- \divy ( E \nabla _y u ) = - \divy ( L(F)\nabla _y u ) = [b \cdny, - \divy ( F \nabla _y u )] = - b \cdny ( \divy ( F \nabla _y u ))
\]
and \eqref{Equ87} follows. 
We need to average the differential operator $\divy ( E \nabla _y ( \divy ( E \nabla _y )))$ on functions $u \in \kerbg$. For simplicity we perform these computations at a formal level, assuming that all fields are smooth enough. The idea is to express the above differential operator in terms of the derivations ${^t R }^{-1} \nabla _y $ which commute with the average operator (see Proposition \ref{AveComFirstOrder}), since the columns of $R^{-1}$ contain vector fields in involution with $b(y)$. 
\begin{lemma}
\label{ChangeOfCoord} Under the hypothesis \eqref{Equ90}, for any smooth function $u(y)$ and matrix field $E(y)$ we have
\begin{equation}
\label{Equ100} \divy ( E \nabla _y u) = \divy ( R \;{^t E}) \cdot ( {^t R}^{-1} \nabla _y u ) + R E \;{^t R} : ( {^t R } ^{-1} \nabla _y \otimes {^t  R }^{-1} \nabla _y ) u.
\end{equation}
\end{lemma}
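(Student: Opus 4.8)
The plan is to trade the flat gradient $\nabla _y$ for the frame of derivations ${}^tR^{-1}\nabla _y$ and then simply expand. Note first that of the hypotheses in \eqref{Equ90} only $\det R(y)\neq 0$ is used here: the involution relation $b\cdny R + R\dyb =0$ is what will later make ${}^tR^{-1}\nabla _y$ commute with the average, but it plays no role in the algebraic identity \eqref{Equ100}. Write $D_k := ({}^tR^{-1}\nabla _y)_k = (R^{-1})_{jk}\,\partial _{y_j}$ for the components of the frame gradient, so that $D_k = c^{(k)}\cdny$ with $c^{(k)}$ the $k$-th column of $R^{-1}$. Because $R^{-1}R = I$ pointwise, these derivations invert the partial derivatives through the purely algebraic operator identity
\[
\partial _{y_i} = ({}^tR)_{ik}\,D_k = R_{ki}\,D_k,\qquad\text{i.e.}\qquad \nabla _y = {}^tR\,({}^tR^{-1}\nabla _y).
\]
No derivative of $R$ enters here; it is just the entrywise reading of $R^{-1}R = I$.

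Next I would expand $\divy(E\nabla _y u) = \partial _{y_i}(E_{ij}\,\partial _{y_j}u)$ in two steps. Replacing the inner gradient by $\partial _{y_j}u = ({}^tR)_{jk}D_k u$ gives $E_{ij}\partial _{y_j}u = (E\,{}^tR)_{ik}D_k u$. Then replacing the outer derivative by $\partial _{y_i} = ({}^tR)_{il}D_l$ and using that each $D_l$ is a genuine first-order derivation, so that Leibniz applies, one obtains
\[
\divy(E\nabla _y u) = ({}^tR)_{il}\,\big[D_l (E\,{}^tR)_{ik}\big]\,D_k u \;+\; ({}^tR)_{il}(E\,{}^tR)_{ik}\,D_l D_k u .
\]

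The final step is to identify the two pieces. In the second-order term the coefficient is $({}^tR)_{il}(E\,{}^tR)_{ik} = (R E\,{}^tR)_{lk}$, which is exactly $R E\,{}^tR : ({}^tR^{-1}\nabla _y\otimes {}^tR^{-1}\nabla _y)u$ with the ordering $D_l D_k u$ inherited from the computation. In the first-order term, collapsing $({}^tR)_{il}D_l$ back to $\partial _{y_i}$ via the same operator identity leaves the coefficient $\partial _{y_i}(E\,{}^tR)_{ik}$ of $D_k u$; since ${}^t(E\,{}^tR) = R\,{}^tE$ and the divergence of a matrix field acts row by row, this coefficient is precisely the $k$-th component of $\divy(R\,{}^tE)$, so the first-order term equals $\divy(R\,{}^tE)\cdot({}^tR^{-1}\nabla _y u)$. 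Assembling the two gives \eqref{Equ100}.

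The whole argument is algebraic and holds pointwise, so no analytic estimate is needed. The only points requiring care are the index bookkeeping — in particular getting the transpose right so that $\partial _{y_i}(E\,{}^tR)_{ik}$ is recognised as $\divy(R\,{}^tE)$ rather than $\divy(E\,{}^tR)$ — and the fact that the columns $c^{(k)}$ of $R^{-1}$ need not be pairwise in involution, so $D_l D_k \neq D_k D_l$ in general; the stated ordering in the second-order tensor must therefore be respected, and this is the single genuinely delicate point of the proof.
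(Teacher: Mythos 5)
Your proof is correct and is essentially the paper's own computation written in index notation: both arguments insert $\nabla _y = {^t R}\,({^t R}^{-1}\nabla _y)$ and apply the Leibniz/product rule, the paper doing so via the matrix identity $\divy (A\xi) = \divy ({^t A})\cdot \xi + {^t A}:\partial _y \xi$ with $A = E\,{^t R}$ and $\xi = {^t R}^{-1}\nabla _y u$. Your added remarks --- that only $\det R \neq 0$ is used for this purely algebraic identity, and that the ordering $D_l D_k$ in the second-order tensor must be respected since the columns of $R^{-1}$ need not be pairwise in involution --- are correct refinements that the paper leaves implicit.
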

\begin{proof}
Applying the formula $\divy (A\xi) = \divy {^t A} \cdot \xi + {^t A } : \partial _y \xi$, where $A(y)$ is a matrix field and $\xi (y)$ is a vector field, one gets
\[
\divy ( E \nabla _y u ) = \divy ( E \;{^t R } \;{^t R ^{-1}} \nabla _y u ) = \divy ( R \;{^t E}) \cdot ( {^t R }^{-1} \nabla _y u ) + R \;{^t E} : \partial _y ( {^t R }^{-1} \nabla _y u ).
\]
The last term in the above formula writes
\begin{align*}
R \;{^t E } : \partial _y ( {^t R }^{-1} \nabla _y u ) & = R \;{^t E} \;{^t R}\; {^t R } ^{-1} : \partial _y ( {^t R } ^{-1} \nabla _y u ) \\
& = R \;{^t E } \;{^t R } : \partial _y ( {^t R} ^{-1} \nabla _y u ) R ^{-1} \\
& = R E \;{^t R} : {^t R }^{-1} \;{^t \partial _y } ( {^t R } ^{-1} \nabla _y u ) \\
& = R E \;{^t R} : ( {^t R} ^{-1} \nabla _y \otimes {^t R} ^{-1} \nabla _y ) u
\end{align*}
and \eqref{Equ100} follows. 
\end{proof}
Next we claim that the term $\ave{\divy ( E \nabla _y ( \divy ( E \nabla _y u )))}$ reduces to a differential operator, if $u \in \kerbg{}$.
\begin{pro}
\label{DifOpe} Under the hypothesis \eqref{Equ90}, for any smooth matrix field $E$ there is a linear differential operator $S(u)$ of order four, such that, for any smooth $u \in \kerbg{}$
\begin{equation}
\label{Equ101} \ave{\divy ( E \nabla _y ( \divy ( E \nabla _y u )))} = S(u).
\end{equation}
\end{pro}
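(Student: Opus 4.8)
The plan is to turn the composite fourth-order operator into a polynomial in the commuting-with-average derivations supplied by \eqref{Equ90}, and only then to take the average. Write $c_1,\dots,c_m$ for the columns of $R^{-1}$, so that the vector field ${^t R}^{-1}\nabla_y$ has components $\delta_i := c_i\cdny$. By \eqref{Equ90} and Remark \ref{ClosedRanL} we have $R_s^{-1}(y)=\partial_y Y(s;y)R^{-1}(y)$, i.e.\ each $c_i$ is in involution with $b$ (Proposition \ref{VFI}); hence each $\delta_i$ commutes with $b\cdny$ and, by Proposition \ref{AveComFirstOrder}, with the average operator. The crucial consequence is that $\delta_i$ leaves $\kerbg$ invariant: if $b\cdny u=0$ then $b\cdny(\delta_i u)=\delta_i(b\cdny u)=0$.

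First I would apply Lemma \ref{ChangeOfCoord} to the inner operator. Reading off the $(i,j)$-entry of $({^t R}^{-1}\nabla_y\otimes {^t R}^{-1}\nabla_y)u$ as $\delta_i\delta_j u$, the lemma gives
\[
w:=\divy(E\nabla_y u)=\sum_i (\divy(R\,{^t E}))_i\,\delta_i u+\sum_{i,j}(R E\,{^t R})_{ij}\,\delta_i\delta_j u,
\]
a second-order operator in the $\delta_i$ with smooth coefficients built from $R$ and $E$. Applying Lemma \ref{ChangeOfCoord} a second time to $\divy(E\nabla_y w)$ and substituting this expression for $w$, the Leibniz rule distributes each outer $\delta_i$ over coefficients (which stay smooth) and over the strings of inner $\delta$'s (which lengthen by at most one). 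Collecting, I obtain a representation
\[
\divy\!\big(E\nabla_y(\divy(E\nabla_y u))\big)=\sum_{k=0}^{4}\ \sum_{|\alpha|=k} a_\alpha(y)\,\delta_{\alpha_1}\cdots\delta_{\alpha_k}u,
\]
with smooth coefficients $a_\alpha$, a genuine fourth-order differential operator in the frame $\delta_i$.

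Now I would take the average, using that $u\in\kerbg$. Since each $\delta_i$ preserves $\kerbg$, every monomial $\delta_{\alpha_1}\cdots\delta_{\alpha_k}u$ lies in $\kerbg$, hence is constant along the flow of $b$. For any $g$ constant along the flow and any $f$, the definition $\ave{h}=\lim_{T\to\infty}\frac1T\int_0^T h\circ Y(s;\cdot)\,\md s$ of Proposition \ref{AverageOperator} yields $\ave{fg}=g\,\ave{f}$, because $g\circ Y(s;\cdot)=g$ factors out of the time average. Applying this with $g=\delta_{\alpha_1}\cdots\delta_{\alpha_k}u$ gives
\[
\ave{\divy\!\big(E\nabla_y(\divy(E\nabla_y u))\big)}=\sum_{k=0}^{4}\ \sum_{|\alpha|=k}\ave{a_\alpha}\,\delta_{\alpha_1}\cdots\delta_{\alpha_k}u=:S(u),
\]
which is exactly \eqref{Equ101}: a fourth-order linear differential operator whose coefficients are the averaged smooth functions $\ave{a_\alpha}$ (themselves constant along the flow), and whose $\delta_i$ may be rewritten back through ${^t R}^{-1}\nabla_y$.

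The main obstacle is bookkeeping rather than conceptual: one must verify that the two applications of Lemma \ref{ChangeOfCoord}, together with the Leibniz expansion, produce only terms of the stated form, with coefficients that remain smooth and strings of $\delta$'s of length at most four. The essential point where the hypothesis $u\in\kerbg$ enters is the factorisation $\ave{a_\alpha\,\delta_{\alpha_1}\cdots\delta_{\alpha_k}u}=\ave{a_\alpha}\,\delta_{\alpha_1}\cdots\delta_{\alpha_k}u$; without the kernel constraint the monomials would not be flow-invariant and the average would fail to reduce to a local differential operator. As stated, the computation is carried out formally, so the required smoothness and the pointwise factorisation of the average cause no difficulty.
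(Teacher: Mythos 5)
Your proof is correct, but it takes a genuinely different route from the paper's. You apply Lemma \ref{ChangeOfCoord} twice, so as to rewrite the full fourth-order operator as $\sum_{|\alpha|\le 4} a_\alpha\,\delta_{\alpha_1}\cdots\delta_{\alpha_k}u$ in the frame of derivations $\delta_i=c_i\cdny$ (the $c_i$ being the columns of $R^{-1}$, in involution with $b$), and then average \emph{pointwise}, using that each monomial $\delta_{\alpha_1}\cdots\delta_{\alpha_k}u$ lies in $\kerbg$ and that $\ave{fg}=g\ave{f}$ when $g$ is constant along the flow; the operator $S$ appears directly with coefficients $\ave{a_\alpha}$. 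The paper instead works at the level of the bilinear form: it integrates by parts once, so that only \emph{two} second-order factors $\divy(E\nabla_y u)$ and $\divy(E\nabla_y\varphi)$ need to be rewritten via Lemma \ref{ChangeOfCoord} (each applied once), pairs them against a test function $\varphi\in\kerbg$, replaces the coefficient tensors by their averages $X,Y,Z,T$ inside the integral, integrates by parts back to produce operators $S_1,\dots,S_4$ preserving $\kerbg$, and closes with the orthogonality argument that $\ave{\divy(E\nabla_y(\divy(E\nabla_y u)))}-S(u)$ belongs to $\kerbg$ and is simultaneously orthogonal to $\kerbg$, hence vanishes. Your version buys a shorter, more transparent identity and avoids that final duality step, at the price of the fourth-order Leibniz bookkeeping (which you correctly flag) and of invoking the pointwise factorisation of the average rather than its $L^2$ self-adjointness; both are harmless here since, as the paper itself stresses, the computation is formal and all fields are assumed smooth. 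The one point worth making explicit in your write-up is the identification of the $(i,j)$ entry of $({^tR}^{-1}\nabla_y\otimes{^tR}^{-1}\nabla_y)u$ with $\delta_i\delta_j u$, which does follow from the last line of the proof of Lemma \ref{ChangeOfCoord}.
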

\begin{proof}
For any smooth functions $u, \varphi \in \kerbg{}$ we have, cf. Lemma \ref{ChangeOfCoord}
\begin{align*}
& \inty{\ave{\divy(E \nabla _y ( \divy ( E \nabla _y u )))}\varphi }  = \inty{\divy(E \nabla _y ( \divy ( E \nabla _y u ))) \varphi }\\
& = \inty{\divy ( E \nabla _y u ) \;\divy ( E \nabla _y \varphi )} \\
& = \inty{\{\divy ( R \; ^t E) \cdot ( ^t R ^{-1} \nabla _y u ) + R E \; ^t R : ( ^t R ^{-1} \nabla _y \otimes {^t R } ^{-1} \nabla _y )u   \}\\
& \times \{\divy ( R \; ^t E) \cdot ( ^t R ^{-1} \nabla _y \varphi ) + R E \; ^t R : ( ^t R ^{-1} \nabla _y \otimes {^t R } ^{-1} \nabla _y )\varphi   \}}\\
& = \inty{[\divy (R \;\;^t E) \otimes \divy ( R \;\;^t E)] : [^t R ^{-1} \nabla _y u \otimes {^t R} ^{-1} \nabla _y \varphi] }\\
& + \inty{[R E \;\;^t R \otimes \divy ( R \;\;^t E)] : [( ^t R ^{-1} \nabla _y  \otimes {^t R }^{-1}\nabla _y )u \otimes {^t R } ^{-1} \nabla _y \varphi] }\\
& + \inty{[\divy( R \;\;^t E) \otimes R E \;\;^t R] : [(^t R ^{-1} \nabla _y u ) \otimes ( ^t R ^{-1} \nabla _y \otimes {^t R}^{-1} \nabla _y ) \varphi]}\\
& + \inty{[R E \;\;^t R \otimes R E \;\;^t R]:[ ( ^t R ^{-1} \nabla _y  \otimes {^t R }^{-1}\nabla _y )u \otimes  ( ^t R ^{-1} \nabla _y  \otimes {^t R }^{-1}\nabla _y )\varphi ]}
\end{align*}
Recall that $^tR ^{-1} \nabla _y $ leaves invariant $\kerbg$ and therefore 
\[
{^t R }^{-1} \nabla _y u \otimes {^t R } ^{-1} \nabla _y \varphi \in \kerbg{}
\]
implying that 
\begin{align*}
& \inty{[\divy (R \;\;^t E) \otimes \divy ( R \;\;^t E)] : [^t R ^{-1} \nabla _y u \otimes {^t R} ^{-1} \nabla _y \varphi] }\\
 = & 
\inty{\ave{\divy (R \;\;^t E) \otimes \divy ( R \;\;^t E)} : [^t R ^{-1} \nabla _y u \otimes {^t R} ^{-1} \nabla _y \varphi] }.
\end{align*}
Similar transformations apply to the other three integrals above, and finally one gets
\begin{align*}
\inty{\ave{\divy(E \nabla _y ( \divy ( E \nabla _y u )))}\varphi }  & = \inty{X : [\nablar u \otimes \nablar \varphi ]} \\
& + \inty{Y : [( \nablar \otimes \nablar )u \otimes \nablar \varphi ]}\\
& + \inty{Z : [\nablar u \otimes  ( \nablar \otimes \nablar ) \varphi] } \\
& + \inty{T : [( \nablar \otimes \nablar )u \otimes ( \nablar \otimes \nablar ) \varphi]}\\
& = I_1 (u, \varphi) + I_2 (u, \varphi) + I_3 (u, \varphi) + I_4 (u, \varphi)
\end{align*}
where $\nablar := {^t R} ^{-1} \nabla _y $ and $X, Y, Z, T$ are tensors of order two, three, three and four respectively
\[
X_{ij} = \ave{\divy ( R \;\;^t E) _i \;\divy(R \;\;^t E)_j},\;\;i,j\in \{1,...,m\}
\]
\[
Y_{ijk} = \ave{(R E \;\;^t R) _{ij} \;\divy (R \;\;^t E)_k},\;\;Z_{ijk} = \ave{\divy ( R \;\;^t E)_i \;\;(RE \;\;^t R)_{jk}} ,\;\;i,j, k\in \{1,...,m\}
\]
\[
T_{ijkl} = \ave{(RE \;\;^t R)_{ij} \;\;(RE \;\;^tR)_{kl}},\;\;i,j, k, l\in \{1,...,m\}.
\]
Integrating by parts one gets
\[
I_1 (u, \varphi) = \inty{X \nablar u \cdot \nablar \varphi } = \inty{R^{-1} X \nablar u \cdot \nabla _y \varphi } = \inty{S_1 (u) \varphi}
\]
where $S_1 (u) = - \divy ( R^{-1} X \nablar u)$. Notice that the differential operator 
\[
\xi \to \divy ( R^{-1} \xi) = \divy (\;^t R ^{-1}) \cdot \xi + {^t R}^{-1} : \partial _y \xi
\]
maps $(\kerbg{})^m$ to $\kerbg{}$, since the columns of $R^{-1}$ contain fields in involution with $b$, and therefore $S_1$ leaves invariant $\kerbg{}$, that is, for any $u \in \kerbg{}$, $\xi = X \nablar u \in (\kerbg{})^m$ and $S_1 (u) = - \divy ( R^{-1} X \nablar u ) = - \divy ( R^{-1} \xi ) \in \kerbg{}$. Similarly we obtain
\[
I_2 (u, \varphi) 
%= \inty{Y_{ijk} \nablar _i ( \nablar _j u)\nablar _k \varphi } 
= \inty{S_2 (u) \varphi },\;\;
%S_2 (u) = - \divy ( R^{-1} Y_{ijk} \nablar _i ( \nablar _j u ) e_k)
I_3 (u, \varphi) = \inty{S_3 (u) \varphi },\;\;I_4 (u, \varphi) = \inty{S_4 (u) \varphi }
% \inty{Z_{ijk} \nablar _i u \nablar _j ( \nablar _k \varphi )} = 
\]
where $S_2, S_3, S_4$ are differential operators of order three, three and four respectively, which leave invariant $\kerbg{}$. We deduce that 
\[
\inty{\ave{\divy (E \nabla _y ( \divy ( E \nabla_y u )))} \varphi } = \inty{S(u) \varphi}
\]
for any $u, \varphi \in \kerbg{}$, with $S = S_1 + S_2 + S_3 + S_4$, saying that 
\[
\ave{\divy (E \nabla _y ( \divy ( E \nabla_y u )))} - S(u) \perp \kerbg{}.
\]
But we also know that 
\[
\ave{\divy (E \nabla _y ( \divy ( E \nabla_y u )))} - S(u) \in \kerbg{}
\]
and thus \eqref{Equ101} holds true.
\end{proof}
Combining \eqref{Equ102}, \eqref{Equ87}, \eqref{Equ101} we obtain
\begin{align*}
\partial _t \{u + \eps u^1\} - \divy ( \ave{D}_Q \nabla _y \{u + \eps u ^1\}) & + \eps [\divy(\ave{D}_Q \nabla _y ), \divy ( F \nabla _y )]\{u + \eps u^1\}  \\
& - \eps S(u + \eps u^1)  = {\cal O}(\eps ^2)
\end{align*}
which justifies the equation introduced in \eqref{IntroEqu87}. The initial condition comes formally by averaging the Ansatz \eqref{Equ6}
\[
\ave{\ue} = u + \eps v^1 + {\cal O}(\eps ^2).
\]
One gets
\[
v ^1 (0, \cdot) = \mbox{w-} \lime \frac{\ave{\uein} - \uin }{\eps} = \vin
\]
implying that $u ^1 (0, \cdot) = \vin + \divy(F \nabla _y \uin )$, cf. \eqref{CorrSplit}, which justifies \eqref{NewIC}.

\section{An example}

Let us consider the vector field $b(y) = {^\perp y} := (y_2, - y_1)$, for any $y = (y_1, y_2) \in \R^2$ and the matrix field 
\[
D (y) = \left(
\begin{array}{cc}
\lambda _ 1 (y)  &   0    \\
0  &   \lambda _2 (y)    
\end{array}
\right),\;\;y \in \R^2
\]
where $\lambda _1, \lambda _2 $ are given functions, satisfying $\min _{y\in \R^2} \{\lambda _1 (y), \lambda _2 (y)\} \geq d>0$. We intend to determine the first order approximation, when $\eps \searrow 0$, for the heat equation
\begin{equation}
\label{Equ91} \partial _t \ue - \divy ( D(y) \nabla _y \ue ) - \frac{1}{\eps} \divy ( b(y) \otimes b(y) \nabla _y \ue ) = 0,\;\;(t, y ) \in \R_+ \times \R ^2
\end{equation}
with the initial condition 
\[
\ue (0, y) = \uin (y),\;\;y \in \R^2.
\]
The flow of $b$ is given by $Y(s;y) = {\cal R}(-s)y$, $s \in \R, y \in \R^2$ where ${\cal R}(\alpha)$ stands for the rotation of angle $\alpha \in \R$. The functions in $\kerbg{}$ are those depending only on $|y|$. Notice that the matrix field
\[
R(y) = \frac{1}{|y|}\left(
\begin{array}{rr}
y_2  &   -y_1   \\
y_1  &   y_2     
\end{array}
\right)
\]
satisfies $b \cdot \nabla _y R + R \partial _y b = 0$ and $Q = {^t R } R = I_2$. The averaged matrix field $\ave{D}_Q$ comes, thanks to Remark \ref{ExplicitAve}, by the formula $R \ave{D}_Q {^t R} = \ave{R D \;{^t R}}$ and thus
\[
\ave{D}_Q = {^t R} \ave{RD\; {^t R}} R,\;\;\ave{RD\; {^t R}} = \left(
\begin{array}{rr}
\ave{\frac{\lambda _1 y _2 ^2 + \lambda _2 y_1 ^2 }{|y|^2}}  & \ave{\frac{(\lambda _1 - \lambda _2)y_1 y _2 }{|y|^2}}     \\
 \ave{\frac{(\lambda _1 - \lambda _2)y_1 y _2 }{|y|^2}} &     \ave{\frac{\lambda _1 y _1 ^2 + \lambda _2 y_2 ^2 }{|y|^2}}
\end{array}
\right).
\]
In the case when $\lambda _1, \lambda _2$ are left invariant by the flow of $b$, that is $\lambda _1, \lambda _2$ depend only on $|y|$, it is easily seen that 
\[
\ave{\frac{y_1 ^2}{|y|^2}} = \ave{\frac{y_2 ^2}{|y|^2}} = \frac{1}{2},\;\;\ave{\frac{y_1 y_2}{|y|^2}} = 0
\]
and thus 
\[
\ave{D}_Q = {^t R } \frac{\lambda _1 + \lambda _2}{2} I_2 R = \frac{\lambda _1 + \lambda _2}{2} I_2.
\]
The first order approximation of \eqref{Equ91} is given by
\[
\left\{
\begin{array}{ll}
 \partial _t u - \divy \left ( \frac{\lambda _1 (y) + \lambda _2 (y)}{2} \nabla _y u \right ) = 0,& \;\;(t, y ) \in \R_+ \times \R ^2   \\
 u(0,y) = \uin (y),& \;\;y \in \R^2. 
\end{array}
\right.
\]
We consider the multi-valued function $u_0 (y) = - \theta (y)$, where $y = |y| ( \cos \theta (y), \sin \theta (y))$, which satisfies $b \cdot \nabla _y u_0 = 1$, or $u_0 (Y(s;y)) = u_0 (y) + s$. Notice that the averaged matrix field $\ave{D}_Q$ satisfies (with $u_1 (y) = |y|^2 /2 \in \kerbg{}$\;)
\[
\nabla _y u_i \cdot \ave{D}_Q \nabla _y u _j = \ave{\nabla _y u _i \cdot D \nabla _y u_j },\;\;i, j \in \{0,1\}
\]
as predicted by Remark \ref{Parametrization}.

%%%%%%%%%%%%%%%%%%%%%%%%%%%%%%
%%%%%%%%% APPENDIX %%%%%%%%%%%
%%%%%%%%%%%%%%%%%%%%%%%%%%%%%%

\appendix
\section{Proofs of Propositions \ref{VFI}, \ref{WVFI}, \ref{MFI}, \ref{WMFI}}
\label{A}
\begin{proof} (of Proposition \ref{VFI}) For simplicity we assume that $b$ is divergence free. The general case follows similarly. Let $c(y)$ be a vector field satisfying \eqref{Equ34}. For any vector field $\phi \in C^1 _c (\R^m)$ we have, with the notation $u _\tau = u (Y(\tau;\cdot))$
\begin{align*}
\inty{c \cdot ( \phi _{-h} - \phi )} = \inty{(c_h - c) \cdot \phi } = \inty{(\partial _y Y (h;y) - I) c \cdot \phi }.
\end{align*}
Multiplying by $h^{-1}$ and passing to the limit when $h \to 0$ imply
\[
- \inty{c ( b \cdny \phi ) } = \inty{\partial _y b c\cdot \phi }
\]
and therefore $(b \cdny ) c - \partial _y b c = 0$ in $\dpri{}$.

Conversely, assume that $[b,c] = 0$ in $\dpri{}$. We introduce $e(s,y) = c(Y(s;y)) - \partial _y Y(s;y) c(y)$. Notice that $e(s,\cdot) \in \loloc{}, s \in \R$ and $e(0, \cdot) = 0$. For any vector field $\phi \in C^1 _c (\R^m)$ we have
\[
E_\phi (s) : = \inty{e(s,y) \cdot \phi (y)} = \inty{c(y) \cdot \phi _{-s}} - \inty{\partial _y Y(s;y) c(y) \cdot \phi (y) }
\]
and thus
\begin{align*}
\frac{\md }{\md s} E_\phi (s) & = - \inty{c(y) \cdot ((b \cdny )\phi )_{-s}} - \inty{\partial _y (b(\ysy)) \;c(y) \cdot \phi (y) } \\
& = - \inty{c \cdot (b \cdny ) \phi _{-s}} - \inty{\partial _y b (Y(s;y)) \dyy c(y) \cdot \phi (y) }\\
& = \inty{\dyb \;c(y) \cdot \phi _{-s}} - \inty{\dyb (\ysy) \dyy c(y) \cdot \phi (y)}\\
& = \inty{\dyb (\ysy) ( c(\ysy) - \dyy c(y)) \cdot \phi (y)} \\
& = \inty{e(s,y)\cdot {^t \dyb} (\ysy) \phi (y)}.
\end{align*}
In the previous computation we have used the fact that the derivation and tranlation along $b$ commute
\[
((b \cdny ) \phi )_{-s} = (b\cdny )\phi _{-s}.
\]
After integration with respect to $s$ one gets
\[
E_\phi (s) = \int _0 ^s \inty{e(\tau,y) \cdot {^t \dyb} (Y(\tau;y)) \phi (y) } \;\md \tau.
\]
Clearly, the above equality still holds true for any $\phi \in C_c (\R^m)$. Consider $R>0, T>0$ and let $K = \|{^t \dyb} \circ Y\|_{L^\infty([-T,T] \times B_R)}$. Therefore, for any $s \in [-T, T]$ we obtain
\begin{align*}
\|e(s,\cdot)\|_{L^\infty(B_R)} & = \sup \{ |E_\phi (s)|\;:\;\phi \in C_c (B_R),\;\;\|\phi \|_{\loy{}} \leq 1\}\\
& \leq K \left |\int _0 ^s \|e (\tau, \cdot) \|_{L^\infty (B_R)}\md \tau   \right |.
\end{align*}
By Gronwall lemma we deduce that $\|e(s,\cdot)\|_{L^\infty(B_R)} = 0$ for $-T \leq s \leq T$ saying that $c(\ysy) - \dyy c(y) = 0, s \in \R, y \in \R^m$.
\end{proof}
\begin{proof} (of Proposition \ref{WVFI})\\
1.$\implies$ 2. By Proposition \ref{VFI} we deduce that $c(\ysy) = \dyy c(y)$ and therefore
\begin{align*}
\inty{(c\cdny u) v_{-s}} & = \inty{c(\ysy) \cdot (\nabla _y u ) (\ysy) v(y)}\\
& = \inty{c(y) \cdot {^t \dyy} (\nabla _y u )(\ysy) v(y) } = \inty{(c(y) \cdot \nabla _y u_s)  v(y)}.
\end{align*}
2.$\implies$ 3. Taking the derivative with respect to $s$ of \eqref{Equ41} at $s = 0$, we obtain \eqref{Equ42}.
3.$\implies$ 1. Applying \eqref{Equ42} with $v \in C^1 _c (\R^m)$ and $u _i = y_i \varphi (y)$, $\varphi \in C^2 _c (\R^m)$, $\varphi = 1$ on the support of $v$, yields
\[
\inty{c_i \;b \cdny v } + \inty{c \cdny b_i \;v (y)}= 0
\]
saying that $b \cdny c_i = (\dyb \;c) _i$ in $\dpri{}$, $i \in \{1,...,m\}$ and thus $[b,c] = b \cdny c - \dyb c = 0$ in $\dpri{}$.
\end{proof}
\begin{proof} (of Proposition \ref{MFI}) The arguments  are very similar to those in the proof of Proposition \ref{VFI}. Let us give the main lines. We assume that $b$ is divergence free, for simplicity. Let $A(y)$ be a matrix field satisfying \eqref{Equ35}. For any matrix field $U \in C^1 _c (\R^m)$ we have
\begin{align*}
\inty{A(y) & : ( U(Y(-h;y)) - U(y) )}  = \inty{(A(Y(h;y)) - A(y)) : U(y) } \\
& = \inty{( \partial _y Y (h;y) A(y) {^t  \partial _y Y (h;y)} - A(y)) : U(y)} \\
& = \inty{\{( \partial _y Y (h;y) - I ) A(y) {^t \partial _y Y (h;y)} : U(y) + A(y) {^t (\partial _y Y (h;y) - I)} : U(y)\}}.
\end{align*}
Multiplying by $\frac{1}{h}$ and passing $h \to 0$ we obtain
\[
- \inty{A(y) : ( b \cdny U ) } = \inty{(\dyb A(y) + A(y) {^t \dyb }):U(y) }
\]
saying that $[b,A] = 0$ in $\dpri{}$. 

For the converse implication define, as before
\[
f(s,y) = A(\ysy ) - \dyy A(y) {^t \dyy},\;\;s \in \R,\;\;y \in \R^m.
\]
For any $U \in C^1 _c (\R^m)$ we have
\begin{align*}
F_U (s)& := \inty{f(s,y) : U(y)} \\
&= \inty{A(y) : U(Y(-s;y))} - \inty{\dyy A(y) {^t \dyy } : U (y)}
\end{align*}
and thus
\begin{align*}
\frac{\md }{\md s} F_U (s) & = - \inty{A(y) : (\;(b \cdny )U\;)_{-s}} - \inty{\partial _y ( b (\ysy)) A(y) {^t \dyy } : U(y)} \\
& - \inty{\dyy A(y) {^t \partial _y ( b (\ysy ))} : U(y)} \\
& = - \inty{A(y) : (b \cdny ) U_{-s} } - \inty{\dyb (\ysy) \dyy A(y) {^t \dyy} : U}\\
& - \inty{\dyy A(y) {^t \dyy} {^t \dyb (\ysy) } : U(y)}\\
& = \inty{ \{ \dyb (\ysy) f(s,y) + f(s,y) {^t \dyb (\ysy)}\} : U(y)} \\
& = \inty{f(s,y) : \{ {^t \dyb (\ysy)} U(y) + U(y) \dyb (\ysy) \}}.  
\end{align*}
The previous equality still holds true for $U \in C_c (\R^m)$, and our conclusion follows as in the proof of Proposition \ref{VFI}, by Gronwall lemma. 
\end{proof}
\begin{proof} (of Proposition \ref{WMFI})\\
$1.\implies 2.$ By Proposition \ref{MFI} we deduce that $A(\ysy) = \dyy A(y) {^t \dyy}$. Using the change of variable $y \to \ysy$ one gets
\begin{align*}
\inty{A(y) \nabla _y u \cdot \nabla _y v } & = \inty{A(\ysy) (\nabla _y u )(\ysy) \cdot (\nabla _y v ) (\ysy)} \\
& = \inty{A(y) {^t \dyy }(\nabla _y u ) (\ysy) \cdot {^t \dyy } (\nabla _y v ) (\ysy) } \\
& = \inty{A(y) \nabla _y u_s \cdot \nabla _y v_s }.
\end{align*}
$2.\implies 3.$ Taking the derivative with respect to $s$ at $s = 0$ of the constant function $s \to \inty{A(y) \nabla _y u_s \cdot \nabla _y v_s}$ yields
\[
\inty{A(y) \nabla _y ( b \cdny u ) \cdot \nabla _y v } + \inty{A(y) \nabla _y u \cdot \nabla _y ( b \cdny v) } = 0.
\]
$3.\implies 2.$ For any $u, v \in C^2 _c (\R^m)$ we can write, thanks to 3. applied with the functions $u_s, v_s$
\begin{align*}
\frac{\md }{\md s} \inty{A(y) \nabla _y u_s \cdot \nabla _y v_s} & = \inty{A(y) \nabla _y ( \;( b \cdny u)_s )\cdot \nabla _y v_s} \\
& + \inty{A(y) \nabla _y u_s \cdot \nabla _y ( \; (b \cdny v )_s)} \\
& = \inty{A(y) \nabla _y ( b \cdny u_s) \cdot \nabla _y v_s } \\
& + \inty{A(y) \nabla _y u_s \cdot \nabla _y ( b \cdny v_s ) } = 0.
\end{align*}
Therefore the function $s \to \inty{A(y) \nabla _y u_s \cdot \nabla _y v_s}$ is constant on $\R$ and thus
\[
\inty{A(y) \nabla _y u_s \cdot \nabla _y v_s} = \inty{A(y) \nabla _y u \cdot \nabla _y v},\;\;s\in \R.
\]
Up to now, the symmetry of the matrix $A(y)$ did not play any role. We only need it for the implication $2.\implies 1.$\\
$2.\implies 1.$ We have
\begin{align*}
\inty{A(y) \nabla _y u \cdot \nabla _y v } & = \inty{A(y) \nabla _y u_s \cdot \nabla _y v_s } \\
& = \inty{A(y) {^t \dyy} ( \nabla _y u)_s \cdot {^t \dyy } (\nabla _y v )_s } \\
& = \inty{\dyy A(y) {^t \dyy } (\nabla _y u )_s \cdot ( \nabla _y v )_s } \\
& = \inty{(\partial _y  Y A \;{^t \partial _y  Y})_{-s} \nabla _y u \cdot \nabla _y v }
\end{align*}
where $(\partial _y  Y A {^t \partial _y  Y})_{-s} = \partial _y Y (s; Y(-s;y)) A(Y(-s;y)) {^t \partial _y Y (s; Y(-s;y))}$. We deduce that 
\[
\inty{(A(y) - (\partial _y Y A \;{^t \partial _y Y })_{-s}) \nabla _y u \cdot \nabla _y v} = 0,\;\;u, v \in C^1 _c (\R^m).
\]
Since $A(y) - (\partial _y Y A \;{^t \partial _y Y })_{-s}$ is symmetric, it is easily seen, cf. Lemma \ref{Divergence} below, that $A(y) - (\partial _y Y A \;{^t \partial _y Y })_{-s}= 0$. Therefore we have $A(\ysy) = \dyy A(y) {^t \dyy}$, $s \in \R,y \in \R^m$ and by Proposition \ref{MFI} we deduce that $[b,A] = 0$ in $\dpri{}$. 
\end{proof}
\begin{lemma}
\label{Divergence}
Consider a field $A(y) \in \loloc{}$ of symmetric matrix satisfying 
\begin{equation}
\label{Equ38} \inty{A(y) \nabla _y u \cdot \nabla _y v} = 0,\;\;u, v \in C^1 _c (\R^m).
\end{equation}
Therefore $A(y) = 0$ a.a. $y \in \R^m$.
\end{lemma}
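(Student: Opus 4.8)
The plan is to reduce first to the case of a smooth field $A$ by mollification, and then to turn the integral identity \eqref{Equ38} into a pointwise algebraic condition at each point.

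First I would mollify. Let $\rho _\delta$ be a standard scalar mollifier and set $A^\delta = A * \rho _\delta$, which is a smooth field of symmetric matrices. The key observation is that \eqref{Equ38} is invariant under translation of the test functions: for fixed $\delta > 0$, using Fubini and the change of variables $y \mapsto y + z$,
\[
\inty{A^\delta (y) \nabla _y u \cdot \nabla _y v } = \int _{\R^m} \rho _\delta (z) \left ( \inty{A(y) \nabla _y [u(\cdot + z)] \cdot \nabla _y [v(\cdot + z)]} \right ) \md z = 0
\]
for all $u, v \in C^1 _c (\R^m)$, since the inner integral vanishes by \eqref{Equ38} applied to the translates $u(\cdot + z), v(\cdot + z) \in C^1 _c (\R^m)$. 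Hence it suffices to prove $A^\delta \equiv 0$ for every $\delta > 0$; as $A^\delta \to A$ in $\loloc$ when $\delta \searrow 0$, this will give $A(y) = 0$ for a.a. $y \in \R^m$.

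So I may now assume $A$ smooth. Fixing $u \in C^2 _c (\R^m)$ and integrating by parts to move the derivative off $v$, \eqref{Equ38} reads $\inty{\divy (A \nabla _y u )\;v} = 0$ for all $v \in C^1 _c (\R^m)$; since $\divy (A \nabla _y u)$ is continuous, this forces $\divy (A \nabla _y u) = 0$ at every point. Expanding, $\divy (A \nabla _y u ) = (\partial _i A_{ij}) \partial _j u + A_{ij} \partial _{ij} u = 0$ for every $y$ and every $u \in C^2 _c (\R^m)$.

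It remains to localize. Fix $y_0 \in \R^m$ and a symmetric matrix $H$, and choose $u(y) = \frac12 H_{ij} (y - y_0)_i (y - y_0)_j \,\chi (y)$ with $\chi \in C^\infty _c (\R^m)$ equal to $1$ near $y_0$, so that $\nabla _y u (y_0) = 0$ and $\partial _{ij} u (y_0) = H_{ij}$. The identity above then yields $A(y_0) : H = 0$; since this holds for all symmetric $H$ and $A(y_0)$ is itself symmetric, taking $H = A(y_0)$ gives $A(y_0) : A(y_0) = 0$, i.e. $A(y_0) = 0$. The only delicate point is the low regularity of $A$, which is precisely what the translation-invariant mollification step disposes of; once $A$ is smooth the argument is routine.
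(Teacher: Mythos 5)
Your proof is correct, but it follows a genuinely different route from the paper's. The paper never regularizes $A$: it plugs the specific test functions $u_i=y_i\varphi$, $u_{ij}=y_iy_j\varphi$ (with $\varphi\equiv 1$ near $\supp v$) and $v_j=y_jv$ directly into \eqref{Equ38}, and the two resulting identities combine, using the symmetry of $A$, into $2\inty{A_{ij}v}=0$ for every $v\in C^1_c(\R^m)$, which gives $A=0$ a.e. without ever leaving the $\loloc{}$ setting. You instead first mollify, justifying this by the translation invariance of the class of test functions together with Fubini (a step that is valid and worth the sentence you spend on it, since $A\in\loloc{}$ and all the other factors are compactly supported), and then reduce to the pointwise algebraic statement $A(y_0):H=0$ for all symmetric $H$ by choosing $u$ with prescribed Hessian and vanishing gradient at $y_0$; the symmetry of $A(y_0)$ is then used exactly where it must be, since the bilinear form in \eqref{Equ38} only sees the symmetric part of $A$. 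What the paper's argument buys is brevity and the absence of any limiting procedure; what yours buys is a transparent explanation of \emph{why} the lemma is true (the form \eqref{Equ38} tests $A$ against all Hessians, hence against all symmetric matrices pointwise), at the cost of the extra mollification layer. Both proofs are complete; note only that your quadratic test functions live in $C^2_c(\R^m)\subset C^1_c(\R^m)$, so invoking \eqref{Equ38} for them is legitimate, and that the paper's own test functions $y_iy_j\varphi$ are of exactly the same nature, so the two arguments are in the end close cousins.
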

\begin{proof}
Applying \eqref{Equ38} with $v_j = y_j v$, $v \in C^1 _c (\R^m)$, $u_i = y_i \varphi (y)$ where $\varphi \in C^1 _c (\R^m)$ and $\varphi = 1$ on the support of $v$, yields
\begin{equation}
\label{Equ39}
\inty{A(y) e_i \cdot ( y_j \nabla _y v + v e_j )} = 0.
\end{equation}
Applying \eqref{Equ38} with $v$ and $u_{ij} = y_i y_j \varphi (y)$ one gets
\begin{equation}
\label{Equ40}
\inty{A(y) ( y_j e_i + y_i e _j ) \cdot \nabla _y v } = 0.
\end{equation}
Combining \eqref{Equ39}, \eqref{Equ40} we obtin for any $i, j \in \{1,...,m\}$
\[
2 \inty{(A(y)e_i \cdot e_j) \;v(y)} = \inty{( A(y) e_i \cdot e_j + A(y) e_j \cdot e_i)v(y)} = 0 
\]
saying that $A(y) = 0$, a.a. $y \in \R^m$.
\end{proof}

\vspace{1cm}

\end{document}